\newcommand{\sA}{\mathcal{A}}
\newcommand{\sN}{\mathcal{N}}
\newcommand{\sS}{\mathcal{S}}
\newcommand{\sX}{\mathcal{X}}
\newcommand{\sY}{\mathcal{Y}}
\newcommand{\N}{\mathbb{N}}
\newcommand{\Z}{\mathbb{Z}}
\newcommand{\R}{\mathbb{R}}
\renewcommand{\H}{\mathbb{H}}
\DeclareMathOperator{\Forall}{\forall}
\DeclareMathOperator{\arcsinh}{arcsinh}
\DeclareMathOperator{\length}{\ell} 
\DeclareMathOperator{\area}{Area}
\DeclareMathOperator{\diam}{Diam}
\DeclareMathOperator{\inj}{inj}
\DeclareMathOperator{\codim}{codim}
\DeclareMathOperator{\collar}{\mathcal{C}} 
\DeclareMathOperator{\dist}{dist}
\newcommand{\dif}{\mathop{}\!\mathrm{d}} 
\newcommand{\thick}{X^{\mathrm{thick}}}
\newcommand{\thin}{X^{\mathrm{thin}}}
\newcommand{\supp}{\mathrm{supp}}
\newcommand{\pa}{\partial}
\newcommand{\df}{:=}
\newcommand{\eg}{\textit{e.g.,\@ }}
\newcommand{\ie}{\textit{i.e.\@ }}
\theoremstyle{plain}
\newtheorem{theorem}{Theorem}[section] 
\newtheorem{lemma}[theorem]{Lemma}
\newtheorem*{construction*}{Construction}
\newtheorem*{theorem*}{\bf Theorem}
\newtheorem*{corollary*}{Corollary}
\newtheorem*{assumption*}{Assumption $(\star)$}
\newtheorem*{observation*}{Observation}
\newtheorem*{claim*}{Claim}
\theoremstyle{remark}
\newtheorem*{remark*}{Remark}
\newtheorem*{question*}{Question}
\theoremstyle{definition}
\newtheorem{definition}[theorem]{Definition} 
\newtheorem*{def*}{Definition}
\begin{document}

\title[Multiplicities of eigenvalues]{Short geodesics and multiplicities of eigenvalues of hyperbolic surfaces}

\author{Xiang He}
\address[X. ~H. ]{Tsinghua University, Beijing, China}
\email{x-he@mail.tsinghua.edu.cn}

\author{Yunhui Wu}
\address[Y. ~W. ]{Tsinghua University \& BIMSA, Beijing, China}
\email{yunhui\_wu@mail.tsinghua.edu.cn} 

\author{Haohao Zhang}
\address[H. ~Z. ]{Tsinghua University, Beijing, China}
\email{zhh21@mails.tsinghua.edu.cn}

\begin{abstract}
In this paper, we obtain upper bounds on the multiplicity of Laplacian eigenvalues for closed hyperbolic surfaces in terms of the number of short closed geodesics and the genus $g$. For example, we show that if the number of short closed geodesics is sublinear in $g$, then the multiplicity of the first eigenvalue is also sublinear in $g$. This makes new progress on a conjecture by Colin de Verdi\`ere in the mid 1980s.
\end{abstract}

\maketitle

\section{Introduction}

Let $X_g$ be a closed hyperbolic surface of genus $g \ge 2$, and let $\Delta$ denote the Laplacian on $X_g$. By classical results in spectral theory, the spectrum of $\Delta$ consists of a sequence of discrete nonnegative eigenvalues counted with multiplicities:
\[
0=\lambda_0(X_g)<\lambda_1(X_g)\le\lambda_2(X_g)\le\cdots\to+\infty.
\]
For any $\lambda>0$, denote by $m(\lambda)$ the multiplicity of $\lambda$ in the spectrum of $\Delta$. In this paper, we are interested in studying the upper bounds of their multiplicities, especially in $m(\lambda_1)$.  

For any constant $\epsilon>0$ that may depend on $g$, define 
\[
\sN_\epsilon(X_g) := \{ \gamma;\ \text{$\gamma\subset X_g$ is a simple closed geodesic of length $<2\epsilon$} \}, 
\]
and set $N_\epsilon(X_g) = \#{\sN_\epsilon(X_g)}$. Our first result is as follows. 
\begin{theorem}\label{thm:mt-1}
Let $X_g$ be a closed hyperbolic surface of genus $g$. Then there exists a uniform constant $K\geq 1$ such that for any constant $\epsilon\in (0,1)$, the multiplicity of $\lambda_1(X_g)$ satisfies
\[m(\lambda_1(X_g))\le \frac{K}{\epsilon^2} \cdot \frac{g}{\log \log \left( \frac{10g}{N_\epsilon(X_g)+1}\right)}.\]
\end{theorem}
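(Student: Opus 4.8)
The plan is to combine the classical nodal-domain / analytic-continuation circle of ideas (going back to Besson, Sévennec, and the work of Colin de Verdière) with a geometric decomposition of $X_g$ adapted to its short geodesics. The starting point is the standard fact that if $f$ is a $\lambda_1$-eigenfunction, then its nodal set has few components and its nodal domains have controlled topology; more precisely, an eigenfunction for $\lambda_1$ vanishing to high order at a point forces a lot of nodal structure, and one obtains a bound of the shape $m(\lambda_1)\lesssim (\text{number of pieces in a good decomposition})\times(\text{local multiplicity bound on each piece})$. Concretely, I would first cut $X_g$ along the geodesics in $\sN_\epsilon(X_g)$; since $\#\sN_\epsilon(X_g)=N_\epsilon(X_g)$, this produces a surface with at most $O(N_\epsilon(X_g))$ boundary components, and the thick part $\thick$ (where the injectivity radius is $\geq \epsilon$) of each resulting piece has area $\lesssim \vol(X_g)=2\pi(2g-2)$. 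The thin parts are collars around the short geodesics, and there are $O(N_\epsilon(X_g))$ of them.

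Next I would quantify the local multiplicity on a thick piece. On a geodesic ball of radius $\epsilon$ in a hyperbolic surface, an eigenfunction of eigenvalue $\lambda_1\le 1/4+o(1)$ behaves like an almost-harmonic function, so the dimension of the space of restrictions of $\lambda_1$-eigenfunctions that vanish to order $n$ at the center of the ball is bounded in terms of $n$; conversely, by a covering/volume argument the thick part of genus $g$ can be covered by $\lesssim g/\epsilon^2$ balls of radius $\epsilon$, and a Courant–Bers–type unique continuation argument shows that an eigenfunction vanishing to order $\gtrsim \log\log(\cdots)$ at a well-chosen point must vanish identically. This is where the iterated logarithm enters: the doubling constant of a $\lambda_1$-eigenfunction on a hyperbolic surface is controlled, and the number of times one can "double down" before exhausting a region of area $A$ using balls of radius $\epsilon$ is roughly $\log\log(A/\epsilon^2)$ after optimizing; feeding in $A \sim g/(N_\epsilon(X_g)+1)$ per piece gives the $\log\log(10g/(N_\epsilon(X_g)+1))$ in the denominator.

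The remaining ingredient is to handle the short geodesics themselves: I would show that each collar neighborhood of a geodesic of length $<2\epsilon$ contributes only a bounded amount (independent of $g$, linear in $1/\epsilon^2$ at worst) to the multiplicity, because on a thin collar a $\lambda_1$-eigenfunction is essentially determined by finitely many Fourier modes in the angular variable around the core geodesic — this uses separation of variables on the hyperbolic cylinder and the fact that $\lambda_1<1/4$ kills all but $O(1/\epsilon)$ modes. Assembling: $m(\lambda_1)$ is bounded by the number of pieces times the per-piece bound, i.e. $\lesssim \frac{1}{\epsilon^2}\cdot \frac{g}{\log\log(10g/(N_\epsilon(X_g)+1))}$ from the thick parts, plus $\lesssim \frac{N_\epsilon(X_g)}{\epsilon^2}$ from the collars, and the latter is absorbed into the former since $N_\epsilon(X_g)\le 10g/\log\log(\cdots)$ in the regime where the bound is nontrivial (otherwise the statement is vacuous as $m(\lambda_1)\le \dim \ker(\Delta-\lambda_1)$ is trivially $O(g)$).

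The main obstacle I expect is making the unique-continuation / vanishing-order step quantitative with the correct $\log\log$ dependence: one needs a doubling inequality for hyperbolic-surface eigenfunctions with constants that are genuinely uniform (independent of $g$ and of the geometry of the piece, depending only on $\lambda_1 \lesssim 1$ and on $\epsilon$), and then a careful chaining argument across $\sim g/\epsilon^2$ balls that loses only a logarithm at each doubling of scale rather than a multiplicative constant. Controlling how the short geodesics interact with this chaining — i.e. ensuring the balls used in the covering stay in the thick part and that crossing a collar does not destroy the doubling estimate — is the delicate geometric point, and I suspect it is where the hypothesis that the $\gamma$'s are \emph{simple} closed geodesics (so that the complement decomposes cleanly) is used.
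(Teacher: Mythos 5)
Your treatment of the short geodesics is in the right spirit: the paper likewise uses separation of variables on each collar to show that, after passing to a subspace of the eigenspace of codimension $O(N_\epsilon(X_g))$ (four Fourier modes per collar suffice when $\lambda_1<1$), every remaining eigenfunction has its $L^2$-mass concentrated on the thick part (Lemma \ref{lem:mass}). That reduction, and the final absorption of the $O(N_\epsilon)$ term into the main bound, match the paper.

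The genuine gap is in your engine for the sublinear bound on the thick part. You propose a nodal-domain / vanishing-order / quantitative unique continuation argument, with the key step being that ``an eigenfunction vanishing to order $\gtrsim \log\log(\cdots)$ at a well-chosen point must vanish identically.'' No such statement is available, and it is not plausible: the vanishing order of a $\lambda_1$-eigenfunction on a genus-$g$ surface is only controlled topologically (via Cheng's local structure theorem, Courant's theorem, and an Euler-characteristic count of how the local nodal sectors can merge), and that control permits vanishing order as large as $\sim g$. This is precisely why the Besson--Nadirashvili--S\'evennec circle of ideas stalls at linear-in-$g$ bounds; the doubling index of a low eigenfunction is governed by $\sqrt{\lambda}\cdot\mathrm{diam}\sim\log g$ at best, not $\log\log g$, so no chaining of doubling inequalities produces the iterated logarithm. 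The paper's actual mechanism is entirely different: following Letrouit--Machado and Jiang--Tidor--Yao--Zhang--Zhao, one takes an $r_1$-net of the thick part with $r_1=c\log\log\bigl(Kg/(N_\epsilon+1)\bigr)$, lets $P$ be the projection orthogonal to the indicator functions of the associated Voronoi cells, and observes that the surviving eigenfunctions are eigenfunctions of $Pe^{-r_1\Delta}P$ with eigenvalue $e^{-r_1\lambda_1}$. The multiplicity is then bounded by $e^{r_1\lambda_1(2n+2)}\int_{\thick}\|(Pe^{-r_1\Delta}P)^{n+1}\delta_x\|^2\,\mathrm dA_x$ with $n=\lfloor r_2/r_1\rfloor$, $r_2=c\log(\cdot)$, and the crucial input is that each application of the localized, projected heat propagator loses a factor $1-C\exp(-hr_1)$ in operator norm; optimizing $\exp(-hr_1)\cdot(r_2/r_1)$ against the size $\sim g/r_1$ of the net is what produces the $\log\log$. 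Without this (or an equivalent) spectral/heat-kernel mechanism, your outline cannot reach the claimed bound.
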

\begin{remark*}
\begin{enumerate}
    \item If $N_{\epsilon_0}(X_g)\leq f(g)=o(g)$ for some $\epsilon_0>0$ and positive function $f:\mathbb{Z}^{\geq 2}\to \mathbb{Z}$ with sublinear growth, then Theorem \ref{thm:mt-1} tells that $$m(\lambda_1(X_g))\ll \frac{g}{\log \log \left(\frac{g}{f(g)} \right)}.$$
    This upper bound is new and also sublinear in $g$. In particular, if $f(g)\ll g^\alpha$ for any fixed $\alpha \in (0,1)$, then we have
    $$m(\lambda_1(X_g))\ll \frac{g}{\log \log g}.$$
See Figure \ref{fig:nsep} for an example. 
    \item If $N_{\epsilon_0}(X_g)=0$ for some $\epsilon_0>0$, which is equivalent to saying that $X_g$ has injectivity radius $\geq \epsilon_0$, as above Theorem \ref{thm:mt-1} tells that  $$m(\lambda_1(X_g))\ll \frac{g}{\log \log g}.$$ This was due to Letrouit--Machado \cite{LM2024} for this case. We remark here that Jiang--Tidor--Yao--Zhang--Zhao \cite{JTYZZ2021} proved the same sublinear upper bound for the second largest eigenvalue of connected $g$-vertex graphs with bounded maximum degree. 
    \item By the classical Collar Lemma, it is known that $N_\epsilon(X_g)\leq 3g-3$ whenever $\epsilon$ is small. Thus, if we choose $\epsilon=0.5$, Theorem \ref{thm:mt-1} then tells that for any closed hyperbolic surface $X_g$ of genus $g$,  $$m(\lambda_1(X_g))\ll g.$$
    The linear upper bound was first obtained by Besson \cite{Besson1980}.
\end{enumerate}
\end{remark*}
\begin{figure}
    \centering
    \includegraphics[scale=0.26]{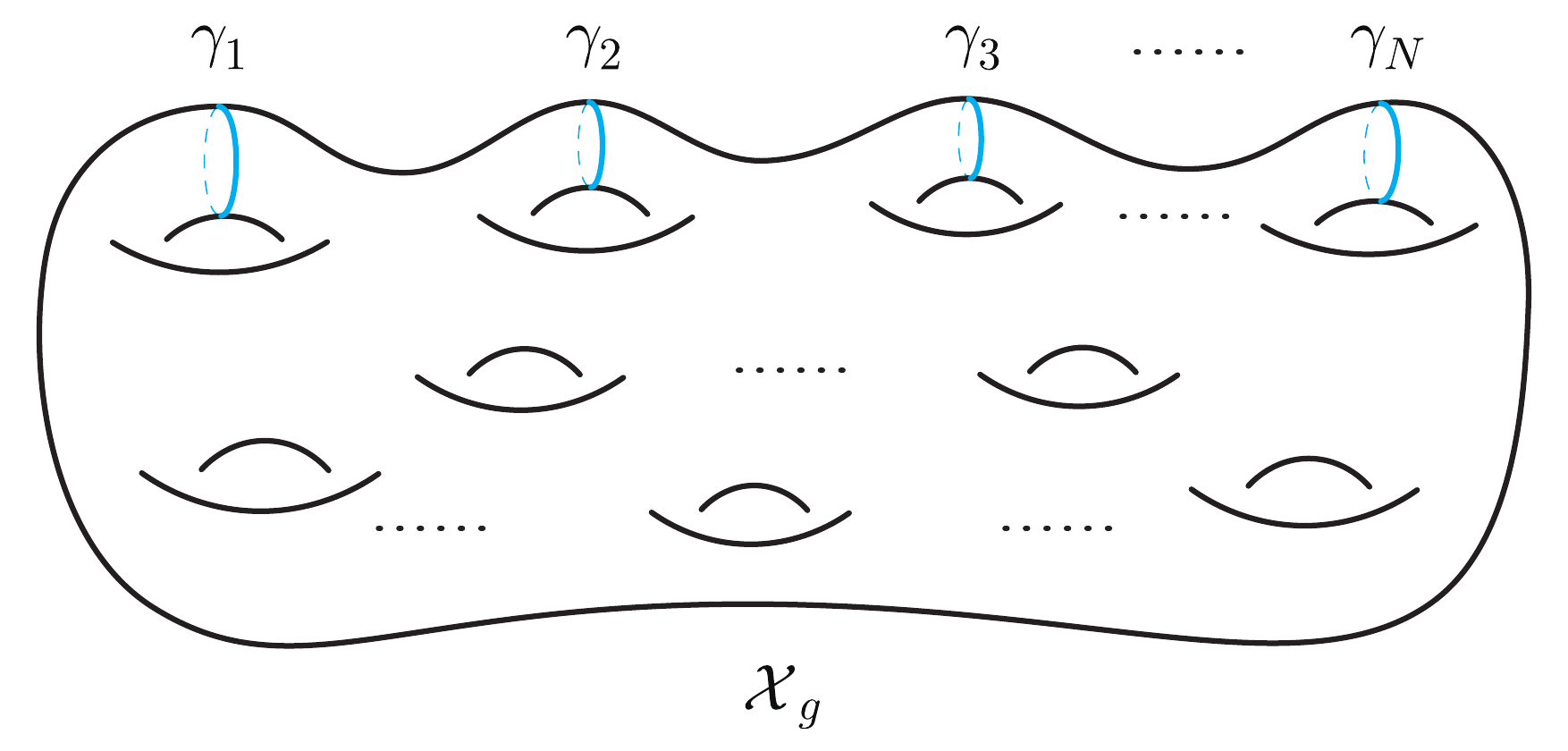}
    \caption{A closed hyperbolic surface $\sX_g$ of genus $g$, where $\gamma_i\in\sN_{\epsilon_0}(\sX_g)$ and $N_{\epsilon_0}(\sX_g)=o(g)$ for some fixed $\epsilon_0>0$. So $m(\lambda_1(\sX_g))=o(g)$.}
    \label{fig:nsep}
\end{figure}

A key novel ingredient in proving Theorem \ref{thm:mt-1} is to find a linear subspace $S$ of the eigenspace $E(\lambda_1(X_g))$ with respect to $\lambda_1(X_g)$ such that the dimension $\dim(S)$ of the subspace $S$ is uniformly comparable to $\dim(E(\lambda_1(X_g)))=m_1(\lambda_1(X_g))$, and the $L^2$-norm of each eigenfunction in $S$ on $X_g$ is uniformly comparable to its $L^2$-norm on the $\epsilon$-thick part of $X_g$. And then restricted to the thick part of $X_g$, we follow a strategy similar to that in the recent works \cite{JTYZZ2021, LM2024} to complete the proof of Theorem \ref{thm:mt-1}.  \\

Our second result concerns the multiplicities of small eigenvalues. For any $\delta\in(0,1/2)$, set 
\[ \epsilon(\delta) = \arcsinh\frac{1}{\sinh(1/\delta+2)}, \]
and for any $\epsilon\in(0,\epsilon(\delta))$, set 
\[I_{\epsilon}(X_g)=\textit{the number of components of }X_g\setminus \sN_{\epsilon}(X_g).\]
Our second theorem is as follows. 
\begin{theorem}\label{thm:mt-2}
 Let $X_g$ be a closed hyperbolic surface of genus $g$, and let $\lambda>0$ be an eigenvalue of $X_g$. If $\lambda\leq1/4-\delta^2$, then for any $\epsilon\in(0,\epsilon(\delta))$, there exists a constant $C(\epsilon)>0$ only depending on $\epsilon$ such that
\[ m(\lambda) \leq C(\epsilon) \sqrt{\lambda}\cdot g + 24I_{\epsilon}(X_g) . \] 
\end{theorem}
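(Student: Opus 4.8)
The plan is to bound $m(\lambda)$ by the maximal \emph{branching number} of the nodal set of a $\lambda$-eigenfunction --- via the classical nodal-domain method of Cheng and Besson --- and then to control that branching number by combining Otal's incompressibility theorem for nodal domains with eigenvalue below $1/4$ with a direct ODE analysis on the collars of the geodesics in $\sN_\epsilon(X_g)$. Fix $\lambda\le 1/4-\delta^2$ and $\epsilon\in(0,\epsilon(\delta))$. By the Collar Lemma the geodesics of $\sN_\epsilon(X_g)$ are pairwise disjoint, and the defining expression for $\epsilon(\delta)$ is chosen precisely so that each $\gamma\in\sN_\epsilon(X_g)$ carries an embedded standard collar $\collar_\gamma$ of half-width exceeding $1/\delta+2$ (the collar of a geodesic of length $2\epsilon$ has half-width $\arcsinh(1/\sinh\epsilon)>\arcsinh(1/\sinh\epsilon(\delta))=1/\delta+2$); cutting $X_g$ along $\sN_\epsilon(X_g)$ produces the $I_\epsilon(X_g)$ components $Y_1,\dots,Y_{I_\epsilon}$ with $\sum_j|\chi(Y_j)|=2g-2$. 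In Fermi coordinates $(\rho,\tau)$ on $\collar_\gamma$, with metric $d\rho^2+\ell_\gamma^2\cosh^2\!\rho\,d\tau^2$, writing a $\lambda$-eigenfunction as $f=\sum_n a_n(\rho)e^{2\pi in\tau}$ and setting $u_n=\sqrt{\cosh\rho}\,a_n$ one computes
\[ u_n''=\Bigl(\tfrac14-\lambda+\tfrac14\operatorname{sech}^2\!\rho+\tfrac{4\pi^2n^2}{\ell_\gamma^2\cosh^2\!\rho}\Bigr)u_n ,\]
whose coefficient is $\ge\delta^2>0$ throughout the (wide) collar. Hence each $u_n$ has at most one zero, grows at least like $e^{\delta|\rho|}$ away from its minimum, and for $n\neq0$ decays at rate $\gtrsim|n|$ off $\partial\collar_\gamma$. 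Two consequences: (a) on the inner part of every collar $f$ agrees with its zero mode $a_0(\rho)$ up to an exponentially small error, $a_0$ changes sign at most once, and $a_0'\neq0$ at that zero, so $f$ has no singular zero there and no nodal domain of $f$ lies inside a collar; (b) the remaining bounded-width outer portions of the collars may be absorbed into the adjacent $Y_j$ without affecting any topological count, so that all singular zeros of $f$ lie in the $\epsilon$-thick parts of the $Y_j$.

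For the reduction: by Cheng's argument one may impose vanishing of order $\ge 2$ at each of $\lceil m(\lambda)/3\rceil-1$ chosen points of the thick part (at most $3$ linear conditions per point), obtaining $0\neq f\in E(\lambda)$ whose branching number $B(f):=\sum_{q}\bigl(\operatorname{ord}_qf-1\bigr)$, the sum over the singular zeros of $f$, is $\ge\lceil m(\lambda)/3\rceil-1$. On the other hand, because $\lambda<1/4$ a theorem of Otal gives $\chi(\Omega)\le-1$ for every nodal domain $\Omega$ of $f$: were $\Omega$ a disk or an essential annulus it would lift to a proper domain of $\H^2$, whose first Dirichlet eigenvalue is $\ge1/4$ by the inequality $\int_{\H^2}|\nabla\phi|^2\ge\tfrac14\int_{\H^2}\phi^2$, contradicting $\lambda_1(\Omega)=\lambda$. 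Additivity of Euler characteristic over the partition of $X_g$ induced by the nodal set then yields the identity $B(f)=(2g-2)-\sum_i|\chi(\Omega_i)|$. Thus $m(\lambda)\le 3B(f)+3$, and it suffices to prove $B(f)\le C(\epsilon)\sqrt\lambda\,g+O(I_\epsilon(X_g))$ for every $\lambda$-eigenfunction $f$.

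By (a)--(b) all singular zeros of $f$ lie in $\bigcup_jY_j^{\mathrm{thick}}$, a surface of bounded local geometry with total area $\le\area(X_g)\le Cg$. On a radius-$\tfrac\epsilon2$ ball there the Almgren--Garofalo--Lin frequency of $f$ is $\lesssim_\epsilon 1$, which bounds the branching of $f$ inside that ball; and because $\lambda<1/4$ the attendant nodal-length estimate on each $Y_j$ can be run at the bounded wavelength $1/\sqrt\lambda>2$, which is what extracts the factor $\sqrt\lambda$ rather than leaving a bare constant. Summing over an efficient cover of $\bigcup_jY_j^{\mathrm{thick}}$ of cardinality $\lesssim_\epsilon g$ gives $\sum_{q\ \text{thick}}(\operatorname{ord}_qf-1)\le C(\epsilon)\sqrt\lambda\,g$, while the bounded transition regions at the funnel ends of the $Y_j$ --- where, by incompressibility, no nodal circle may bound a disk --- contribute at most $O(1)$ per piece, hence $O(I_\epsilon(X_g))$ in total. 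Tracking constants yields $m(\lambda)\le C(\epsilon)\sqrt\lambda\,g+24\,I_\epsilon(X_g)$.

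The step I expect to be the main obstacle is exactly this last one: making the per-ball nodal-branching estimate both uniform --- constants depending only on $\epsilon$, not on $g$ or $\sys(X_g)$ --- and genuinely $\sqrt\lambda$-gaining for $\lambda<1/4$, and organizing the thin-part bookkeeping so that the combinatorial error counts the number of pieces $I_\epsilon(X_g)$ rather than the a priori larger number $N_\epsilon(X_g)$ of short geodesics. It is here that the collar-width hypothesis $\epsilon<\epsilon(\delta)$ and the ODE above are used to their full strength, and where the explicit constant $24$ is extracted; a first pass through the argument would likely only yield the weaker bound $m(\lambda)\le C(\epsilon)\,g+O(I_\epsilon(X_g))$, which one would then have to sharpen.
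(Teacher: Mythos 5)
Your proposal takes a genuinely different route from the paper (which never touches nodal sets: it integrates the spectral projector diagonal $\sum_i\phi_i(x)^2$ over the thick part, controls it by an oscillation estimate for normalized eigenfunctions obtained from Sobolev embedding, and runs a dyadic covering argument in the spirit of Gross--Lachman--Nachmias, after first showing via the collar ODE that the $L^2$-mass concentrates on the thick part). Unfortunately, the critical quantitative step of your approach is missing. The Cheng-type reduction together with Otal's incompressibility theorem and your Euler-characteristic identity $B(f)=(2g-2)-\sum_i|\chi(\Omega_i)|$ only yields $B(f)\le 2g-2$, hence a bound linear in $g$ with no $\sqrt{\lambda}$ factor and no appearance of $I_\epsilon(X_g)$. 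Everything beyond that --- the claim that a frequency-function bound plus a ``nodal-length estimate at wavelength $1/\sqrt{\lambda}$'' upgrades this to $B(f)\le C(\epsilon)\sqrt{\lambda}\,g+O(I_\epsilon(X_g))$ --- is asserted rather than argued. Nodal length does not control the branching number $\sum_q(\operatorname{ord}_qf-1)$: a nodal set of small total length can still contain many transversal self-intersections, and a per-ball bound ``branching $\lesssim_\epsilon 1$'' summed over $\lesssim_\epsilon g$ balls gives only $O_\epsilon(g)$, not $O_\epsilon(\sqrt{\lambda}\,g)$. You also give no mechanism by which the thin-part contribution is counted by the number of components $I_\epsilon(X_g)$ rather than by the number of short geodesics $N_\epsilon(X_g)$ (or by $g$); note that in the theorem these can differ by a factor of $g$. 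You acknowledge that a first pass yields only $C(\epsilon)g+O(I_\epsilon)$, and indeed that weaker statement is all your sketch supports.

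A secondary inaccuracy: your step (a) asserting that $f$ has no singular zeros and no nodal domains deep inside a collar is false in general. The convexity argument controls each Fourier mode $u_n$ separately, but $f$ is the sum of all modes; if the zero mode $a_0$ vanishes identically (or is dominated by $a_1$ near the core), the nodal set of $f$ inside the collar is governed by the oscillatory factors $\cos(2\pi n\tau)$, $\sin(2\pi n\tau)$ and can contain singular points and nodal domains there. The paper avoids this issue entirely: it does not exclude nodal structure from the thin part, but instead shows (Lemma \ref{l-s-ub-3}, via the comparison in \eqref{i-c-tt} with the exponent $\delta$ coming from $\tfrac14-\lambda\ge\delta^2$) that at most a fixed fraction of the $L^2$-mass of any eigenfunction lives there, which is what is actually needed. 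If you want to salvage a nodal-set proof, the missing ingredient is a genuine upper bound on the total branching of a small eigenfunction that is $o(g)$ when $\lambda=o(1)$; no such bound is currently available from the tools you cite.
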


\begin{remark*}
    \begin{enumerate}
        \item If $\lambda_1(X_g)=o(1)$ and $I_{\epsilon_0}(X_g) = o(g)$ for some fixed $\epsilon_0$ as $g\to\infty$, then Theorem \ref{thm:mt-2} tells that $m(\lambda_1(X_g))=o(g)$. Note that $N_{\epsilon_0}(X_g)$ may have growth rate $g$, in which case Theorem \ref{thm:mt-1} only gives a linear upper bound for $m(\lambda_1(X_g))$. See Figure \ref{fig:sep} for an example. 
        \item Suppose $\epsilon\ll 1/(\log g)^{\kappa}$ for some $\kappa>0$ as $g\to\infty$. If $\sN_{\epsilon}(X_g)$ separates $X_g$, and $I_{\epsilon_0}(X_g)\ll g/(\log g)^{\kappa}$ for some fixed $\epsilon_0$, then the upper bound given by Theorem \ref{thm:mt-2} is slightly stronger than that given by Theorem \ref{thm:mt-1}. 
    \end{enumerate}
\end{remark*}
\begin{figure}
    \centering
    \includegraphics[scale=0.28]{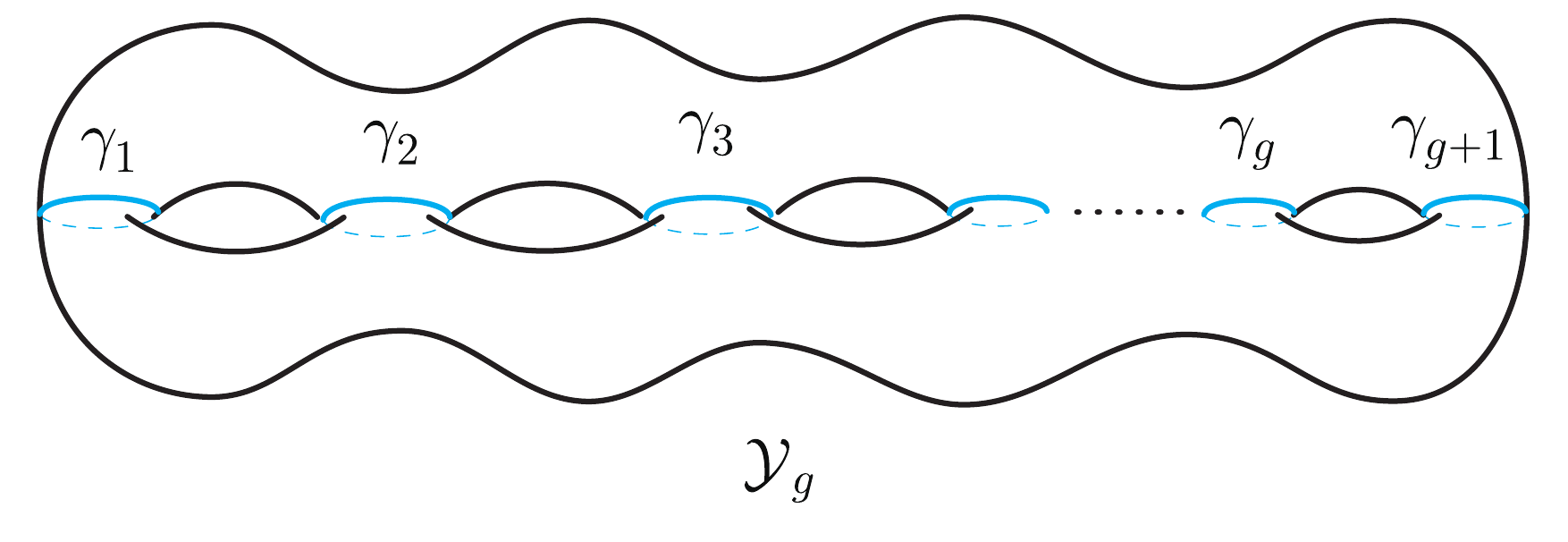}
    \caption{A closed hyperbolic surface $\sY_g$ of genus $g$ separated by $g+1$ geodesics into two pieces, where $\gamma_i\in\sN_{\epsilon}(\sY_g)$ and for each $i$, $\ell(\gamma_i)\to0$ as $g\to\infty$. So $m(\lambda_1(\sY_g))=o(g)$.}
    \label{fig:sep}
\end{figure}

We remark here that for a family of closed hyperbolic surfaces $\{X_g^t\}_{t\in (0,1]}$ of genus $g$ that has $(3g-3)$ pairwise disjoint simple closed geodesics simultaneously pinching to zero as $t\to 0$, it is known from \eg \cite{SWY80} or \cite[Proof of Theorem 21]{HW25} that $\lambda_{2g-3}(X_g^t)\to 0$ uniformly as $t\to 0$. However, both Theorem \ref{thm:mt-1} and Theorem \ref{thm:mt-2} cannot distinguish the first $(2g-3)$ nonzero eigenvalues of this particular family $\{X_g^t\}_{t\in (0,1]}$. To our knowledge, it is not even known whether the multiplicity $m(\lambda_1(X_g^t))$ is sublinear in $g$ when $t$ is close to $0$.

\subsection*{Related works}

The first upper bound on the multiplicity of Laplacian eigenvalues $\{\lambda_i\}$ on closed Riemannian surfaces was established by Cheng \cite{Cheng1976}, who showed that for any closed Riemannian surface of genus $g$, the multiplicity of the $i$-th eigenvalue $\lambda_i$ satisfies
\[ m(\lambda_i)\le \frac 12 (2g+i+1)(2g+i+2),\qquad \Forall i\ge 1. \] 
This was later improved by Besson \cite{Besson1980}, establishing the first linear upper bound: 
\[ m(\lambda_i)\le 4g+2i+1. \] 
Many refinements of Besson's bound have since been obtained. Nadirashvili \cite{Nadirashvili1988} improved the bound for $g\ge2$ to $4g+2i-1$; S\'evennec \cite{Sevennec2002} improved it further for $g\ge2$, $i=1$ to $2g+3$; and Bourque--Petri \cite{BP2023} improved the bound for $i=1$ to $2g-1$ for hyperbolic surfaces of sufficiently large genus. 

Several sublinear upper bounds on the multiplicity of the first nonzero Laplacian eigenvalue on hyperbolic surfaces have been established under certain assumptions. As introduced above, if the surface has injectivity radius $\geq\epsilon>0$, Letrouit--Machado \cite{LM2024} proved that 
$$ m(\lambda_1) \ll_{\epsilon} \frac{g}{\log\log g}. $$
Gilmore--Le Masson--Sahlsten--Thomas \cite{GLeMST21} proved that if the surface has injectivity radius $\geq\frac{1}{(\log g)^\alpha}$, and at each point there is at most one geodesic loop of length less than $c\log g$ based there, then 
\begin{align*}
    m(\lambda) &\ll_{c,\lambda} \frac{g}{(\log g)^{1-\alpha}} \text{\quad for\ } \lambda\in\Big(\frac14, +\infty\Big), \\
    \text{and\quad} m(\lambda) &\ll_{c} g^{1-2c\sqrt{\epsilon}}(\log g)^{2\alpha} \text{\quad for\ } \lambda\in\Big(0,\frac14-\epsilon\Big). 
\end{align*}
Monk \cite{Monk22} proved that if the surface has injectivity radius $\ge g^{-\frac 1{24}}(\log g)^{\frac 9{16}}$ and the area of the points where the injectivity radius $<\frac 16\log g$ is $O(g^{\frac 23})$, then
\[
m(\lambda)\ll \sqrt{1+\lambda}\frac{g}{\sqrt{\log g}}.
\]
We emphasize here that our upper bound in Theorem \ref{thm:mt-1} and Theorem \ref{thm:mt-2} do not require any lower bound of the injectivity radius, instead we need the upper bounds of the number of short closed geodesics or components of the thick part of the surface. 
Bourque--Petri \cite{BP2023} proved that, for every $p_1,p_2\in(j_0,\pi]$, where $j_0$ is the first positive zero of the Bessel function $J_0$,  
$$ m(\lambda_1) \ll_{p_1,p_2} \frac{g}{(\log g)^3}, \text{\quad when\ $\lambda_1\in \bigg[ \frac14+\Big(\frac{p_1}{\log g}\Big)^2, \frac14+\Big(\frac{p_2}{\log g}\Big)^2 \bigg]$}. $$

\subsubsection*{Colin de Verdi{\`e}re's conjecture}
In \cite{CdV86}, Colin de Verdi{\`e}re  \emph{conjectured} that the multiplicity of the first nonzero Laplacian eigenvalue on a closed orientable surface $M_g$ of genus $g$ is bounded from above by
\[
\mathrm{chr}(M_g)-1,
\]
where the \emph{chromatic number} $\mathrm{chr}(M_g)$ is the supremum of the natural numbers $n$ for which the complete graph on $n$ vertices can be embedded in $M_g$.  It is known from \cite{RY68} of Ringel--Youngs that
\[
\mathrm{chr}(M_g) = \Big\lfloor\frac 12\big(7+\sqrt{48g+1}\big) \Big\rfloor.
\]
In support of this, Colbois and Colin de Verdi{\`e}re constructed, in \cite{CCdV88}, for every $g\ge 3$, a hyperbolic surface $X_g$ such that $$m(\lambda_1(X_g))=\Big\lfloor\frac{1+\sqrt{8g+1}}2\Big\rfloor,$$
demonstrating that the maximal multiplicity grows at least on the order of $\sqrt g$. However, counterexamples to the conjecture are found for $g = 10$ and $g = 17$ by Bourque--Gruda-Mediavilla--Petri--Pineault in \cite{BGMPP23}.

Nevertheless, for closed hyperbolic surfaces of high genus, it is quite interesting to \emph{determine the right growth rate of the maximal multiplicity of the first nonzero eigenvalue of the Laplacian. Is it $\sqrt{g}$ in the above conjecture of Colin de Verdi\`ere, or $\frac{g}{\log \log g}$ as shown above, or something else?}

\subsection*{Notations.} For any two positive functions $f(g)$ and $h(g)$, we say $f=o(h)$ if $\lim\limits_{g\to \infty}\frac{f(g)}{h(g)}=0$; we say $f\ll h$ if there exists a uniform constant $C>0$ such that $f\leq Ch$. We use $\lfloor \cdot \rfloor$ to denote the greatest integer function.

\subsection*{Plan of the paper}
In Section \ref{section2} we introduce the standard thick-thin decomposition of closed hyperbolic surfaces. In Section \ref{section3} we use the number of short closed geodesics to construct a subspace of $\lambda$-eigenspace on which each eigenvalue has $L^2$-norm concentrated in the thick part, which will be applied in the proofs of Theorem \ref{thm:mt-1} and Theorem \ref{thm:mt-2}. In Section \ref{section4} we prove Theorem \ref{thm:mt-1}. And we prove Theorem \ref{thm:mt-2} in Section \ref{section5}.

\subsection*{Acknowledgements} We would like to thank all the participants in our seminar at Tsinghua University on Teichm\"uller theory for helpful discussions on this project. The first named author is supported by the China Postdoctoral Science Foundation No. 2024M761591. The second named author is partially supported by NSFC grants No. 12171263, 12361141813, and 12425107.

\tableofcontents

\section{Thick-thin decomposition}\label{section2}
Let $X_g$ be a closed hyperbolic surface of genus $g$, and let $\dist(\cdot,\cdot)$ denote the distance function on $X_g$. For any simple closed geodesic $\gamma\subset X_g$, the collar around $\gamma$ of width $w$ is defined by 
\[ \collar(\gamma,w) \df \{x\in X_g| \ \dist(x,\gamma)\leq w \}. \] 
Let $\ell(\gamma)$ be the length of $\gamma$. The Collar Lemma asserts that the collar of width 
\begin{equation}\label{eqn:collarwidth}
    w(\gamma) \df \arcsinh\frac{1}{\sinh\frac{\ell(\gamma)}{2}} 
\end{equation}
is always embedded in $X_g$. We call it the standard collar of $\gamma$. Moreover, $\collar(\gamma,w(\gamma))$ is isometric to $[-w(\gamma),w(\gamma)]\times \R/\Z$ endowed with the hyperbolic metric 
\begin{equation*}
    \dif\rho^2 + \ell(\gamma)^2\cosh^2\!\rho\dif t^2. 
\end{equation*}
The pair of coordinates $(\rho,t)$ is called the Fermi coordinates on $\collar(\gamma,w(\gamma))$. For any $x\in X_g$, let $\inj(x)$ denote the injectivity radius at $x$. We recall that 
\begin{lemma}[Collar Lemma, {\cite[Theorem 4.1.6]{Buser1992}}]\label{lem:collar}
    Let $\{\gamma_1,\cdots,\gamma_N\}$ be the set of all simple closed geodesics of length $\leq2\arcsinh1$ in $X_g$, and let $\collar_i$ be the standard collar of $\gamma_i$. Then the followings hold.
    \begin{enumerate}
        \item $N\leq 3g-3$. 
        \item The collars $\collar_i$ are pairwise disjoint. 
        \item Suppose $x\in \collar_i$ and $\dist(x,\pa\collar_i) = d$, then 
        \begin{equation}\label{eqn:inj}
                \sinh\inj(x) = \cosh\frac{\ell(\gamma_i)}{2}\cosh d - \sinh d. 
            \end{equation}
    \end{enumerate}
\end{lemma}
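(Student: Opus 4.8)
The plan is to derive all three assertions from the single-curve case recalled just above — that $\collar(\gamma,w(\gamma))$ is embedded and isometric to the Fermi model $[-w(\gamma),w(\gamma)]\times\R/\Z$ with metric $\dif\rho^2+\ell(\gamma)^2\cosh^2\!\rho\,\dif t^2$ — together with plane hyperbolic trigonometry in $\H^2$. For part (1) I would first check that the $\gamma_i$ are pairwise disjoint: if distinct $\gamma_i,\gamma_j$ crossed, then a proper subarc of $\gamma_j$ would run across the embedded collar $\collar_i$ from one boundary circle, through $\gamma_i$, to the other boundary circle, and since $\dif s^2\ge\dif\rho^2$ in the Fermi model any such arc has length $\ge 2w(\gamma_i)\ge 2\arcsinh1$, whence $\ell(\gamma_j)>2\arcsinh1$ — impossible. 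As distinct simple closed geodesics are never freely homotopic and are essential, $\{\gamma_1,\dots,\gamma_N\}$ is then a family of pairwise disjoint, pairwise non-isotopic, essential simple closed curves, which is contained in a pants decomposition of $X_g$; since every pants decomposition has exactly $3g-3$ curves, $N\le 3g-3$.

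For part (3) I would work in the Fermi coordinates $(\rho,t)$ on $\collar_i$. If $\dist(x,\pa\collar_i)=d$, then the $\rho$-coordinate of $x$ has $|\rho_0|=w(\gamma_i)-d$, i.e. $\dist(x,\gamma_i)=w(\gamma_i)-d$. The deck transformation generating $\pi_1(\collar_i)$ is $(\rho,t)\mapsto(\rho,t+1)$, which, under the identification of the universal cover of $\collar_i$ with a strip in $\H^2$, is the hyperbolic translation $T$ of length $\ell(\gamma_i)$ along the lift of $\gamma_i$. Thus the geodesic loop at $x$ freely homotopic to $\gamma_i$ has length $\dist_{\H^2}(\tilde x,T\tilde x)$, and a direct computation with $\cosh\dist(z,w)=1+|z-w|^2/(2\,\Im z\,\Im w)$ shows this equals $2\arcsinh\!\big(\cosh\tfrac{\ell(\gamma_i)}{2}\cosh d-\sinh d\big)$, using $\sinh w(\gamma_i)=1/\sinh\tfrac{\ell(\gamma_i)}{2}$. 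It remains to know that this loop realizes $\inj(x)$. Any non-contractible loop at $x$ that is not freely homotopic to a power of $\gamma_i$ must leave $\collar_i$; the delicate point — where the specific value $2\arcsinh1$ and the disjointness in (2) enter — is that no such loop can be shorter, essentially because a short enough competing loop would be freely homotopic to some $\gamma_k$ and would force $x$ into the interior of $\collar_k$, contradicting (2).

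Part (2) is the crux. Using the triangle inequality with nearest-point projections, $x\in\collar_i\cap\collar_j$ forces $\dist(\gamma_i,\gamma_j)\le\dist(x,\gamma_i)+\dist(x,\gamma_j)\le w(\gamma_i)+w(\gamma_j)$, so it suffices to prove $\dist(\gamma_i,\gamma_j)>w(\gamma_i)+w(\gamma_j)$ for distinct disjoint $\gamma_i,\gamma_j$. For this I would lift to $\H^2$, pick lifts $\tilde\gamma_i,\tilde\gamma_j$ realizing $d=\dist(\gamma_i,\gamma_j)$, and use that embeddedness of $\collar_i$ gives $\dist_{\H^2}(\tilde\gamma_i,g\tilde\gamma_i)\ge 2w(\gamma_i)$ for every deck transformation $g$ that does not stabilize $\tilde\gamma_i$ — in particular for the primitive translation $T_j$ along $\tilde\gamma_j$ and its powers, and symmetrically with $i$ and $j$ exchanged. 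Placing the four ideal endpoints of $\tilde\gamma_i$ and $T_j^{k}\tilde\gamma_i$ on $\pa\H^2$ and computing $\dist_{\H^2}(\tilde\gamma_i,T_j^{k}\tilde\gamma_i)$ through their cross-ratio turns these into explicit inequalities among $\ell(\gamma_i),\ell(\gamma_j),d$, and combining the contributions of both curves should produce the full sum $w(\gamma_i)+w(\gamma_j)$ rather than merely the maximum. I expect this trigonometric bookkeeping to be the main obstacle; it is precisely the content of Buser's Theorem 4.1.6, so one may alternatively just quote that result.
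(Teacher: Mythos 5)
The paper does not prove this lemma at all: it is quoted verbatim from Buser (Theorem 4.1.6), so there is no internal argument to compare yours against. Judged as a self-contained proof, your sketch correctly identifies the standard structure, and part (1) is essentially complete: the crossing argument (an arc of $\gamma_j$ traversing the embedded collar $\collar_i$ has length at least $2w(\gamma_i)\geq 2\arcsinh 1$, forcing $\ell(\gamma_j)>2\arcsinh 1$ since the arc is proper) plus the extension of a disjoint non-isotopic family to a pants decomposition is exactly right. The Fermi-coordinate computation in part (3) is also correct; with $\rho_0=w(\gamma_i)-d$ and $\sinh w(\gamma_i)=1/\sinh\frac{\ell(\gamma_i)}{2}$ one indeed gets $\sinh\frac{\ell(\gamma_i)}{2}\cosh\rho_0=\cosh\frac{\ell(\gamma_i)}{2}\cosh d-\sinh d$ for half the length of the $\gamma_i$-loop at $x$, which gives the upper bound $\inj(x)\leq\arcsinh\big(\cosh\frac{\ell(\gamma_i)}{2}\cosh d-\sinh d\big)$.

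The two steps you leave open are, however, the entire content of the lemma, and your proposed ways of closing them do not work as stated. For (3), the claim that a shorter competing geodesic loop at $x$ ``would be freely homotopic to some $\gamma_k$'' is false in general: near $\pa\collar_i$ the target value of $2\inj(x)$ is about $2\arcsinh\cosh\frac{\ell(\gamma_i)}{2}$, which can exceed $2\arcsinh 1\approx 1.76$ (it can be as large as $2\arcsinh\sqrt2\approx 2.29$), so the closed geodesic in the free homotopy class of a competing loop need not have length $\leq 2\arcsinh 1$, need not be simple, and hence need not appear in the list $\{\gamma_1,\dots,\gamma_N\}$; the actual argument in Buser runs through a separate quantitative estimate rather than through part (2). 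For (2), the reduction to $\dist(\gamma_i,\gamma_j)>w(\gamma_i)+w(\gamma_j)$ is the right target, but the inequality $\dist_{\H^2}(\tilde\gamma_i,T_j^k\tilde\gamma_i)\geq 2w(\gamma_i)$ by itself does not obviously yield the sum of the two widths; the standard proof instead analyzes the right-angled quadrilaterals/hexagons built on the common perpendicular of $\tilde\gamma_i$ and $\tilde\gamma_j$ (equivalently, shows the regions $\{\sinh\dist(\cdot,\gamma)\sinh\frac{\ell(\gamma)}{2}\leq 1\}$ are disjoint), and you have not carried out any version of this computation. Since you end by proposing to quote Buser, your write-up is acceptable exactly to the extent that the paper's is, namely as a citation; as a proof it has genuine gaps at both crux points.
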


Without loss of generality, we now fix $\epsilon$ so that 
\begin{equation}\label{eqn:epsilon}
    0< \epsilon < \arcsinh\frac{1}{\sinh2} \approx 0.272. 
\end{equation}
Recall that 
\[ \sN_\epsilon(X_g) = \{ \gamma;\ \text{$\gamma\subset X_g$ is a simple closed geodesic of length $<2\epsilon$} \}. \] 
Since $\epsilon < \arcsinh1$, by Part (2) of Lemma \ref{lem:collar}, $\sN_\epsilon(X_g)$ consists of pairwise disjoint simple closed geodesics. Moreover, by \eqref{eqn:collarwidth}, for any $\gamma\in \sN_\epsilon(X_g)$ we have 
\[ w(\gamma)>1. \] 
In the proof of Theorem \ref{thm:mt-1}, we will use a thick-thin decomposition of $X_g$ which is defined as follows. 

\begin{definition}\label{def:thick-thin}
    The thin part of $X_g$ is defined as
    \begin{equation*}
        \thin_g \df \bigcup_{\gamma\in \sN_\epsilon(X_g)}\collar(\gamma,w(\gamma)-1). 
    \end{equation*}
    The thick part of $X_g$ is defined as the closure of its complement, which is given by 
    \begin{equation*}
        \thick_g \df \overline{X\setminus\thin_g} = \sA_1\sqcup\sA_2\sqcup\cdots\sqcup\sA_I. 
    \end{equation*}
    Here $I\geq 1$, and $\sA_k$, $1\leq k\leq I$, are all components of $\thick_g$. 
\end{definition}
\noindent Then we have the following. 
\begin{itemize}
    \item By \eqref{eqn:inj} and \eqref{eqn:epsilon}, for any $x\in \thick_g$ we have 
    \begin{equation}\label{injlb}
    \inj(x)\geq \min\big\{\epsilon,\, \arcsinh(\cosh1-\sinh1)\big\}=\epsilon. 
    \end{equation}
    \item $\Forall \gamma\in\sN_\varepsilon(X_g)$, the length of a component of $\pa\collar(\gamma,w(\gamma)-1)$ is equal to 
    \begin{equation}\label{eqn:lw-1}
        \begin{aligned}
            &\ell(\gamma)\cosh(w(\gamma)-1)
            \leq \frac2e \ell(\gamma)\cosh w(\gamma)\\
            &= \frac2e \sqrt{\ell(\gamma)^2 + \frac{\ell(\gamma)^2}{\sinh^2(\ell(\gamma)/2)}}\leq \frac2e \sqrt{1+4} < 2. 
        \end{aligned}
    \end{equation}
\end{itemize}

\section{Mass distribution of eigenfunctions}\label{section3}

One key ingredient in the proof of Theorem \ref{thm:mt-1} is that, if $m(\lambda)$ is large (compared to the number of short closed geodesics $N_\epsilon$), then we can find a subspace of the eigenspace $E(\lambda)$ such that the $L^2$-norm of any eigenfunction in this subspace is concentrated on the thick part of $X_g$. 

Let $f$ be an eigenfunction of $\Delta$ with eigenvalue $\lambda$, and let $\collar(\gamma,w)$ be an embedded collar in $X_g$. In the Fermi coordinates $(\rho,t)$, the Laplacian is given by 
\begin{equation*}
    -\Delta = \frac{\pa^2}{\pa\rho^2} + \tanh\rho\frac{\pa}{\pa\rho} + \frac{1}{\ell(\gamma)^2\cosh^2\!\rho}\frac{\pa^2}{\pa t^2}. 
\end{equation*}
Suppose $f$ has the following Fourier expansion on $\collar(\gamma,w)$, 
\begin{equation}\label{eqn:fourier}
    f(\rho,t) = \alpha_0(\rho) + \sum_{j=1}^\infty \big(\alpha_j(\rho)\cos(2\pi jt)+\beta_j(\rho)\sin(2\pi jt)\big). 
\end{equation}
Then $\sqrt{\cosh\rho}\cdot \alpha_j$ and $\sqrt{\cosh\rho}\cdot \beta_j$ satisfy the following differential equation (one may refer to \eg \cite[Section 4]{Gamburd2002}, \cite[Section 3]{Mondal2015} and \cite[Section 5]{WZ2025}  for more details): 
\begin{equation}\label{eqn:u}
    \frac{\dif^2u}{\dif\rho^2} = \bigg( \frac14-\lambda + \Big(\frac14 + \frac{4\pi^2j^2}{\ell(\gamma)^2}\Big)\frac{1}{\cosh^2\rho} \bigg)u. 
\end{equation}
Let $\varphi_j$, $\psi_j$ be two linearly independent solutions of \eqref{eqn:u} satisfying 
\[ \varphi_j(0) = \frac{\dif\psi_j}{\dif\rho}(0) = 0, \textrm{\quad and\quad} \frac{\dif\varphi_j}{\dif\rho}(0) = \psi_j(0) = 1. \] 
Recall that $w(\gamma)$ is the width of the standard collar of $\gamma$. We have the following elementary lemma: 
\begin{lemma}\label{lem:mass-1}
    Suppose that $\ell(\gamma)\leq 1$. If $j\geq\lfloor\sqrt{\lambda}\rfloor+1$, then 
    \begin{equation*}
        \max\bigg\{ \frac{\int_0^{w(\gamma)-1}\varphi_j^2 \dif\rho}{\int_0^{w(\gamma)}\varphi_j^2 \dif\rho},\ \frac{\int_0^{w(\gamma)-1}\psi_j^2 \dif\rho}{\int_0^{w(\gamma)}\psi_j^2 \dif\rho} \bigg\} \leq \frac{4}{e^{2}}. 
    \end{equation*}
\end{lemma}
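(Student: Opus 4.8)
The plan is to analyze the ODE \eqref{eqn:u} directly. Set $q(\rho) = \tfrac14 - \lambda + \bigl(\tfrac14 + \tfrac{4\pi^2 j^2}{\ell(\gamma)^2}\bigr)\tfrac{1}{\cosh^2\rho}$, so that $u'' = q(\rho)\,u$. Since $\ell(\gamma)\le 1$ and $j \ge \lfloor\sqrt\lambda\rfloor + 1 > \sqrt\lambda$, we have $4\pi^2 j^2/\ell(\gamma)^2 \ge 4\pi^2 j^2 > 4\pi^2 \lambda$, hence on the relevant range $0 \le \rho \le w(\gamma)$ one checks $q(\rho) \ge \tfrac14 - \lambda + (4\pi^2 j^2 - \lambda)\tfrac{1}{\cosh^2\rho} \ge c$ for an explicit constant $c$ that is comfortably positive (using $j^2 > \lambda$ and $\cosh^2\rho$ bounded on the collar). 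The key point is that $q \ge 1$ on this range once one uses the crude bounds $\cosh^2(w(\gamma)) \le 5$ (from \eqref{eqn:lw-1}, since $\ell(\gamma)^2\cosh^2 w(\gamma) \le 5$ and $\ell(\gamma)\le 1$ forces $\cosh^2 w(\gamma)\le 5$) together with $4\pi^2 j^2 \gg \lambda$; I will verify $q \ge 1$ cleanly.

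With $q$ positive, $\varphi_j$ and $\psi_j$ are convex where positive and both are nonnegative and increasing on $[0, w(\gamma)]$: indeed $\varphi_j(0)=0$, $\varphi_j'(0)=1>0$, and $\varphi_j'' = q\varphi_j \ge 0$ keeps $\varphi_j' \ge 1$ and $\varphi_j > 0$ for $\rho > 0$; similarly $\psi_j(0)=1$, $\psi_j'(0)=0$, $\psi_j''=q\psi_j\ge 0$ keeps $\psi_j \ge 1$ and increasing. The heart of the estimate is then a comparison/convexity argument showing that such functions grow at least like $e^{\rho}$ (since $q \ge 1$ gives $u'' \ge u$, so $u$ dominates a genuine $\cosh$ or $\sinh$ solution). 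Concretely, for $\psi_j$ one has $\psi_j(\rho) \ge \cosh\rho$ by Sturm comparison with $v'' = v$, $v(0)=1$, $v'(0)=0$; for $\varphi_j$ one has $\varphi_j(\rho) \ge \sinh\rho$ similarly. This forces the tail integral $\int_{w-1}^{w} u^2$ to carry a definite fraction of $\int_0^w u^2$; quantitatively, writing $w = w(\gamma) > 1$, one gets
\[
\frac{\int_0^{w-1} u^2\,\dif\rho}{\int_0^{w} u^2\,\dif\rho} = 1 - \frac{\int_{w-1}^{w} u^2\,\dif\rho}{\int_0^{w} u^2\,\dif\rho},
\]
and the exponential lower bound $u(\rho) \ge c_0 e^{\rho}$ on $[w-1,w]$ (valid for $w$ bounded below, using $w>1$) combined with the exponential upper bound $u(\rho) \le u(w) e^{-(w-\rho)}\cdot(\text{const})$ — which follows because once $u'' \ge u$ and $u,u'>0$, the ratio $u'/u \ge \tanh\rho \to$ close to $1$, so $u$ is essentially monotone-exponential — pins the ratio of the tail to the whole at something like $1 - e^{-2}$, which gives the claimed bound $4/e^2$ after arranging constants.

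The main obstacle I anticipate is making the "exponential lower bound near the right endpoint versus the total mass" comparison sharp enough to land exactly at $4/e^2$ rather than merely some constant less than $1$. The cleanest route is: (i) show $u(\rho)^2 \le A \cosh^2\rho$ fails and instead prove $u$ is log-convex-ish so that $\int_0^{w} u^2 \le C \int_{w-1}^{w} u^2$ with $C$ explicit; more precisely, since $u'/u$ is increasing (because $(u'/u)' = q - (u'/u)^2$ and one can show $u'/u \le $ something... actually $u'/u$ need not be monotone, so) (ii) the robust version: use that $u(\rho) e^{-\rho}$ is nondecreasing once $q\ge 1$ — check $(u e^{-\rho})' = (u' - u)e^{-\rho}$ and $(u'-u)' = u'' - u' = qu - u'$; since initially $u' - u \ge$ (check sign for each of $\varphi,\psi$) and its derivative $qu - u' \ge u - u' $, one gets $u' - u \ge 0$ propagates, so $u e^{-\rho}$ is nondecreasing. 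Hence for $\rho \in [0, w-1]$, $u(\rho) \le u(w-1) e^{\rho - (w-1)} \le u(w-1) e^{\rho-w+1}$, giving $\int_0^{w-1} u^2 \le u(w-1)^2 \int_0^{w-1} e^{2(\rho-w+1)}\dif\rho \le \tfrac12 u(w-1)^2$. On the other hand $u e^{-\rho}$ nondecreasing gives $u(\rho) \ge u(w-1) e^{\rho - (w-1)}$ for $\rho \in [w-1, w]$, so $\int_{w-1}^{w} u^2 \ge u(w-1)^2 \int_{w-1}^{w} e^{2(\rho - w+1)}\dif\rho = u(w-1)^2 \cdot \tfrac{e^2 - 1}{2}$. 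Therefore
\[
\frac{\int_0^{w-1} u^2\,\dif\rho}{\int_0^{w} u^2\,\dif\rho} \le \frac{\tfrac12 u(w-1)^2}{\tfrac12 u(w-1)^2 + \tfrac{e^2-1}{2} u(w-1)^2} = \frac{1}{e^2} < \frac{4}{e^2},
\]
which even beats the stated bound — so the proof will go through comfortably, and the only real work is confirming $q \ge 1$ and the sign propagation $u' \ge u$ on $[0,w]$ for both $\varphi_j$ and $\psi_j$ (for $\psi_j$ one uses $\psi_j'(0) = 0 < 1 = \psi_j(0)$ so $u'-u$ starts negative — here instead I would directly compare with $e^\rho$ via Sturm, or note $\psi_j \ge \cosh\rho$ and $\psi_j' = \int_0^\rho q\psi_j \ge \int_0^\rho \psi_j$, bootstrapping to $\psi_j' \ge \sinh\rho$ and then $\psi_j e^{-\rho}$ eventually nondecreasing). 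I would present the argument uniformly by establishing: $u > 0$, $u' > 0$ on $(0,w]$, and $(u e^{-\rho})' \ge 0$ on $[\rho_0, w]$ for a small $\rho_0$, handling the two initial conditions as minor separate checks.
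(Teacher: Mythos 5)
Your argument is essentially correct but takes a genuinely different route from the paper on the key step. The paper also first verifies that the coefficient $q(\rho)=\frac14-\lambda+\big(\frac14+\frac{4\pi^2j^2}{\ell(\gamma)^2}\big)\frac{1}{\cosh^2\rho}$ is at least $1$, but then it cites an external comparison lemma (\cite[Lemma 5.2]{WZ2025}) with $u_2=\cosh\rho$ to reduce the ratio of integrals of $\varphi_j^2$ (resp.\ $\psi_j^2$) to the explicitly computable ratio $\int_0^{w-1}\cosh^2\rho\,\dif\rho\big/\int_0^{w}\cosh^2\rho\,\dif\rho$, which it bounds by $4/e^2$. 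You instead run a self-contained Gronwall-type argument: setting $v=u'-u$, one has $(ve^{\rho})'=(q-1)ue^{\rho}\ge 0$, so $ue^{-\rho}$ is monotone once $v\ge0$, and the two-sided exponential comparison around $\rho=w-1$ gives the ratio directly. For $\varphi_j$ this is airtight ($v(0)=1>0$) and even yields the sharper bound $1/e^2$. Two points need attention. First, your justification of $q\ge1$ contains a false step: $\ell(\gamma)^2\cosh^2 w(\gamma)\le 5$ together with $\ell(\gamma)\le1$ does \emph{not} imply $\cosh^2 w(\gamma)\le 5$ (for small $\ell(\gamma)$ the width $w(\gamma)$ is large); what you actually need, and what does follow from $\ell(\gamma)^2\cosh^2\rho\le5$, is $\frac{4\pi^2j^2}{\ell(\gamma)^2\cosh^2\rho}\ge\frac{4\pi^2j^2}{5}$, whence $q\ge\frac{4\pi^2j^2}{5}-\lambda\ge j^2(\frac{4\pi^2}{5}-1)\ge1$ using $j^2>\lambda$. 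Second, for $\psi_j$ you correctly note that $v(0)=-1<0$, so $\psi_je^{-\rho}$ is initially decreasing and your displayed computation does not apply verbatim; your proposed patch does work, since $(ve^{\rho})'\ge(q-1)e^{\rho}\ge 5\,e^{\rho}$ forces $v\ge0$ for $\rho\ge\rho_0$ with $\rho_0<0.2$, and on $[0,\rho_0]$ the monotonicity of $\psi_j$ gives $\psi_j(\rho)\le\psi_j(w-1)e^{\rho_0-(w-1)}$, which only adds $\rho_0e^{2\rho_0}\psi_j(w-1)^2e^{-2(w-1)}\cdot e^{2(w-1)}$, i.e.\ a bounded extra contribution to the numerator, leaving the ratio well below $4/e^2$ (one needs $w(\gamma)-1\ge\rho_0$, which holds since $\ell(\gamma)\le1$ gives $w(\gamma)\ge1.4$). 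With these two repairs your proof is complete and avoids the external lemma the paper relies on.
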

\begin{proof}
    Suppose that $\rho\leq w(\gamma)$. By \eqref{eqn:collarwidth} and our assumption $\ell(\gamma)\leq1$, we have 
    \[ \ell(\gamma)^2\cosh^2\!\rho \leq \ell(\gamma)^2 + \frac{\ell(\gamma)^2}{\sinh^2\!\frac{\ell(\gamma)}{2}} \leq 5. \]
    Therefore if $j\geq\lfloor\sqrt{\lambda}\rfloor+1$, then 
    \begin{equation*}
        \frac14-\lambda + \Big(\frac14 + \frac{4\pi^2j^2}{\ell(\gamma)^2}\Big)\frac{1}{\cosh^2\rho} \geq \frac{4\pi^2j^2}{5}-\lambda \geq 1. 
    \end{equation*}
    It follows that $\varphi_j$ and $\psi_j$ satisfy
    \begin{equation*}
        \frac{\dif^2\varphi_j}{\dif\rho^2} \geq \varphi_j \text{\quad and\quad} \frac{\dif^2\psi_j}{\dif\rho^2} \geq \psi_j. 
    \end{equation*}
    By plugging $u_1=\varphi_j \textit{ or } \psi_j$ and $u_2=\cosh\rho$ into \cite[Lemma 5.2]{WZ2025}, then we have 
    \begin{equation}\label{i-c-tt}
         \begin{aligned}
       \max\bigg\{ \frac{\int_0^{w(\gamma)-1}\varphi_j^2 \dif\rho}{\int_0^{w(\gamma)}\varphi_j^2 \dif\rho},\ \frac{\int_0^{w(\gamma)-1}\psi_j^2 \dif\rho}{\int_0^{w(\gamma)}\psi_j^2 \dif\rho} \bigg\}
       &\leq \frac{\int_0^{w(\gamma)-1}\cosh^2\!\rho \dif\rho}{\int_0^{w(\gamma)}\cosh^2\!\rho \dif\rho}\\ 
            &  =  \frac{2w(\gamma)-2+\sinh(2w(\gamma)-2)}{2w(\gamma)+\sinh(2w(\gamma))}\\
            &\leq \frac{3\exp(2w(\gamma)-2)}{\exp(2w(\gamma))-\exp(-2w(\gamma))}\\
            &\leq \frac{4}{e^{2}}, 
         \end{aligned}
    \end{equation}
     where we use the fact that $\exp(2w(\gamma))-\exp(-2w(\gamma))\geq 3\exp(2w(\gamma))/4$. 
    The proof is complete. 
\end{proof}

Next, for any $j\geq0$ and any simple closed geodesic $\gamma\subset X_g$, we define 
\begin{equation*}
    \begin{aligned}
        \varphi_{j,\gamma}^1(x) = \begin{cases}
            \varphi_j(\rho)\cos(2\pi jt), &x=(\rho,t)\in\collar(\gamma,w(\gamma));\\
            0, &x\notin\collar(\gamma,w(\gamma)). 
        \end{cases}
    \end{aligned}
\end{equation*}  
For any $j\ge 1$, we define
\begin{equation*}
    \begin{aligned}
        \varphi_{j,\gamma}^2(x) = \begin{cases}
            \varphi_j(\rho)\sin(2\pi jt), &x=(\rho,t)\in\collar(\gamma,w(\gamma));\\
            0, &x\notin\collar(\gamma,w(\gamma)). 
        \end{cases}
        \end{aligned}
\end{equation*}  
And let $\psi_{j,\gamma}^1, \psi_{j,\gamma}^2\in L^2(X_g)$ be defined similarly. For all $j\geq 0$, we now define a subspace $S(j,\gamma)$ of $L^2(X_g)$ as
\begin{equation*}
 S(j,\gamma):=\mathrm{span}\big\{ \varphi_{0,\gamma}^1,\varphi_{1,\gamma}^1,\varphi_{1,\gamma}^2,\cdots,\varphi_{j,\gamma}^1,\varphi_{j,\gamma}^2, \psi_{0,\gamma}^1,\psi_{1,\gamma}^1,\psi_{1,\gamma}^2,\cdots,\psi_{j,\gamma}^1,\psi_{j,\gamma}^2 \big\}.
\end{equation*}
Then we have 
\begin{lemma}\label{lem:mass-2}
    Let $f$ be an eigenfunction with eigenvalue $\lambda$, and let $\gamma$ be a simple closed geodesic of length $\leq1$. If $f\in S(\lfloor\sqrt{\lambda}\rfloor,\gamma)^{\bot}$, the orthogonal complement of $S(\lfloor\sqrt{\lambda}\rfloor,\gamma)$, then 
    \begin{equation*}
        \int_{\collar(\gamma,w(\gamma)-1)}f^2 \dif A \leq 2\int_{\collar(\gamma,w(\gamma))\setminus\collar(\gamma,w(\gamma)-1)}f^2 \dif A. 
    \end{equation*}
\end{lemma}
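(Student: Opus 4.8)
The plan is to run the argument mode by mode in the Fourier expansion \eqref{eqn:fourier} of $f$ along the core geodesic $\gamma$: the orthogonality hypothesis will annihilate the low modes, and Lemma \ref{lem:mass-1} will control the high ones. First I would record two structural facts. Since the potential in \eqref{eqn:u} is even in $\rho$ and the normalizations are $\varphi_j(0)=0$, $\psi_j'(0)=0$, the solution $\varphi_j$ is an odd function of $\rho$ and $\psi_j$ is an even function; consequently $\int_{-a}^{a}\varphi_j\psi_j\,h(\rho)\,\dif\rho=0$ for every $a>0$ and every even $h\ge 0$, while $\int_{-a}^{a}\varphi_j^2\,h(\rho)\,\dif\rho$ and $\int_{-a}^{a}\psi_j^2\,h(\rho)\,\dif\rho$ are strictly positive when $h>0$ somewhere. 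Secondly, as recalled after \eqref{eqn:fourier}, the functions $u_{j,c}:=\sqrt{\cosh\rho}\,\alpha_j$ and $u_{j,s}:=\sqrt{\cosh\rho}\,\beta_j$ solve \eqref{eqn:u}, hence each is of the form $A\varphi_j+B\psi_j$. Using the area element $\ell(\gamma)\cosh\rho\,\dif\rho\,\dif t$ and Parseval's identity in $t$, I would rewrite both sides of the desired inequality, for $r\in\{w(\gamma)-1,\,w(\gamma)\}$, as
\[ \int_{\collar(\gamma,r)}f^2\,\dif A \;=\; \ell(\gamma)\int_{-r}^{r}\Big(u_{0,c}^2+\tfrac12\sum_{j\ge1}\big(u_{j,c}^2+u_{j,s}^2\big)\Big)\dif\rho, \]
and the pairings of $f$ against the generators of $S(\lfloor\sqrt\lambda\rfloor,\gamma)$ as, for $j\ge1$ (with the obvious modification for $j=0$),
\[ \big\langle f,\varphi_{j,\gamma}^1\big\rangle_{L^2(X_g)} = \tfrac{\ell(\gamma)}{2}\int_{-w(\gamma)}^{w(\gamma)} u_{j,c}\,\varphi_j\,\sqrt{\cosh\rho}\,\dif\rho, \]
with the analogous formulas for $\psi_{j,\gamma}^1$ (replace $\varphi_j$ by $\psi_j$) and for $\varphi_{j,\gamma}^2,\psi_{j,\gamma}^2$ (replace $u_{j,c}$ by $u_{j,s}$).

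Using $f\in S(\lfloor\sqrt\lambda\rfloor,\gamma)^{\perp}$, I would then force $\alpha_j\equiv\beta_j\equiv 0$ for every $0\le j\le\lfloor\sqrt\lambda\rfloor$. Fix such a $j$ and write $u_{j,c}=A\varphi_j+B\psi_j$. Because the weight $\sqrt{\cosh\rho}$ is even, the $\varphi_j\psi_j$ cross term drops out of $\langle f,\varphi_{j,\gamma}^1\rangle$, leaving $\tfrac{\ell(\gamma)}{2}A\int_{-w(\gamma)}^{w(\gamma)}\varphi_j^2\sqrt{\cosh\rho}\,\dif\rho$; since this integral is positive and the pairing vanishes, $A=0$. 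Then $u_{j,c}=B\psi_j$, and $\langle f,\psi_{j,\gamma}^1\rangle=\tfrac{\ell(\gamma)}{2}B\int_{-w(\gamma)}^{w(\gamma)}\psi_j^2\sqrt{\cosh\rho}\,\dif\rho=0$ forces $B=0$, so $u_{j,c}\equiv 0$ and $\alpha_j\equiv 0$. The identical computation with $\varphi_{j,\gamma}^2,\psi_{j,\gamma}^2$ gives $\beta_j\equiv 0$ for $1\le j\le\lfloor\sqrt\lambda\rfloor$. Hence in the first display only the terms with $j\ge\lfloor\sqrt\lambda\rfloor+1$ survive.

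For those surviving modes I would invoke Lemma \ref{lem:mass-1}. Writing $u_{j,c}=A\varphi_j+B\psi_j$ and using parity again, $\int_{-r}^{r}u_{j,c}^2\,\dif\rho = 2A^2\int_0^{r}\varphi_j^2\,\dif\rho + 2B^2\int_0^{r}\psi_j^2\,\dif\rho$; since $\ell(\gamma)\le 1$ and $j\ge\lfloor\sqrt\lambda\rfloor+1$, Lemma \ref{lem:mass-1} bounds each of $\int_0^{w(\gamma)-1}\varphi_j^2\,\dif\rho$ and $\int_0^{w(\gamma)-1}\psi_j^2\,\dif\rho$ by $\tfrac{4}{e^2}$ times its counterpart over $[0,w(\gamma)]$, whence $\int_{-(w(\gamma)-1)}^{w(\gamma)-1}u_{j,c}^2\,\dif\rho\le\tfrac{4}{e^2}\int_{-w(\gamma)}^{w(\gamma)}u_{j,c}^2\,\dif\rho$, and likewise for $u_{j,s}$. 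Summing over $j\ge\lfloor\sqrt\lambda\rfloor+1$ (all lower terms vanish) yields $\int_{\collar(\gamma,w(\gamma)-1)}f^2\,\dif A\le\tfrac{4}{e^2}\int_{\collar(\gamma,w(\gamma))}f^2\,\dif A$. Finally, with $P:=\int_{\collar(\gamma,w(\gamma)-1)}f^2\,\dif A$ and $Q:=\int_{\collar(\gamma,w(\gamma))\setminus\collar(\gamma,w(\gamma)-1)}f^2\,\dif A$, this reads $P\le\tfrac{4}{e^2}(P+Q)$, i.e. $P\le\tfrac{4}{e^2-4}Q$, and $\tfrac{4}{e^2-4}<2$ since $e^2>6$; this is exactly the claimed inequality.

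The one step demanding genuine care — and the main obstacle to a first attempt — is the vanishing of the low modes. The pairing $\langle f,\varphi_{j,\gamma}^1\rangle$ carries the weight $\sqrt{\cosh\rho}$, which is \emph{not} the natural (unit) weight of the Sturm--Liouville equation \eqref{eqn:u}, so a priori orthogonality to a single solution conveys little. The resolution is that $\sqrt{\cosh\rho}$ is nevertheless \emph{even}, while the chosen normalizations make $\{\varphi_j,\psi_j\}$ an odd/even basis of the two-dimensional solution space; this parity is precisely what makes the two conditions $f\perp\varphi_{j,\gamma}^1$ and $f\perp\psi_{j,\gamma}^1$ jointly enough to kill $\alpha_j$ (and similarly for $\beta_j$). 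The remaining points are routine: the termwise manipulation of the Fourier series is legitimate because an eigenfunction is smooth, so its series in $t$ converges in $C^\infty$ on the compact collar (justifying Parseval and the pairing identities), and the passage from the ratio $4/e^2$ to the constant $2$ is the elementary computation above.
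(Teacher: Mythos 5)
Your proof is correct and follows essentially the same route as the paper's: expand $f$ in Fourier modes along the collar, use orthogonality to $S(\lfloor\sqrt\lambda\rfloor,\gamma)$ to kill the modes with $j\le\lfloor\sqrt\lambda\rfloor$, apply Lemma \ref{lem:mass-1} to the surviving modes, and conclude via $\frac{4/e^2}{1-4/e^2}<2$. The only difference is that you spell out the parity argument ($\varphi_j$ odd, $\psi_j$ even) justifying why orthogonality forces the low-mode coefficients to vanish, a step the paper states without proof.
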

\begin{proof}
    We use the same notation as in \eqref{eqn:fourier}. Assume that 
    \begin{equation*}
        \begin{aligned}
            \sqrt{\cosh\rho}\cdot\alpha_j(\rho) &= a_{j1}\cdot\varphi_j(\rho) + a_{j2}\cdot\psi_j(\rho),\\
            \sqrt{\cosh\rho}\cdot\beta_j(\rho) &= b_{j1}\cdot\varphi_j(\rho) + b_{j2}\cdot\psi_j(\rho).
        \end{aligned}
    \end{equation*}
    Then the $L^2$-norm of $f$ on the collar $\collar(\gamma,w)$ is 
    \begin{equation}\label{eqn:mass-2-1}
        \int_{\collar(\gamma,w)}f^2 \dif A = 2\ell^2(\gamma)\cdot \int_{0}^{w} \Big(a_{01}^2\varphi_0^2+a_{02}^2\psi_0^2 + \frac12\sum_{j=1}^\infty(c_{j1}^2\varphi_j^2+c_{j2}^2\psi_j^2)\Big) \dif\rho, 
    \end{equation}
    where $c_{ji}^2=a_{ji}^2+b_{ji}^2$. If $f\in S(\lfloor\sqrt{\lambda}\rfloor,\gamma)^{\bot}$, then 
    \begin{equation}\label{eqn:mass-2-2}
        a_{01}=a_{02}=0,\text{\quad and\quad} c_{j1}=c_{j2}=0,\ 1\leq j\leq \lfloor\sqrt{\lambda}\rfloor. 
    \end{equation}
    It then follows from \eqref{eqn:mass-2-1}, \eqref{eqn:mass-2-2} and Lemma \ref{lem:mass-1} that 
    \begin{equation*}
        \begin{aligned}
            \int_{\collar(\gamma,w(\gamma)-1)}f^2 \dif A 
            &\leq \frac{4/e^2}{1-4/e^2} \int_{\collar(\gamma,w(\gamma))\setminus\collar(\gamma,w(\gamma)-1)}f^2 \dif A\\
            &  < 2\int_{\collar(\gamma,w(\gamma))\setminus\collar(\gamma,w(\gamma)-1)}f^2 \dif A. 
        \end{aligned}
    \end{equation*}
    The proof is complete. 
\end{proof}

Recall that for $0<\epsilon<1$,  
\[ \sN_\epsilon(X_g) = \{ \gamma;\ \text{$\gamma\subset X_g$ is a simple closed geodesic of length $<2\epsilon$} \}. \] 
We have the following lemma. 
\begin{lemma}\label{lem:mass}
    Let $X_g$ be a closed hyperbolic surface, and let $E(\lambda)\subset L^2(X_g)$ be an eigenspace of $\Delta$ with respect to the eigenvalue $\lambda$. Then there exists a subspace $S$ of $E(\lambda)$ satisfying 
    \[ \codim S \leq 4(\lfloor\sqrt{\lambda}\rfloor+1)\cdot N_\epsilon(X_g) \] 
    such that $\Forall f\in S$ (if $S\neq 0$), 
    \[ \int_{X_g} f^2 \dif A \leq 3\int_{\thick_g} f^2 \dif A, \] 
    where $\thick_g$ is the thick part of $X_g$ corresponding to $\sN_{\epsilon}(X_g)$. 
\end{lemma}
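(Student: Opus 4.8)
The plan is to realize $S$ as the part of $E(\lambda)$ that is orthogonal, collar by collar, to the explicit finite-dimensional subspaces $S(\lfloor\sqrt{\lambda}\rfloor,\gamma)$, and then to obtain the mass-concentration inequality by simply summing the local estimate of Lemma \ref{lem:mass-2} over all $\gamma\in\sN_\epsilon(X_g)$.

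First I would set
\[ S := E(\lambda)\cap\bigcap_{\gamma\in\sN_\epsilon(X_g)}S(\lfloor\sqrt{\lambda}\rfloor,\gamma)^{\perp} \]
and bound its codimension. For $j=\lfloor\sqrt{\lambda}\rfloor$ the spanning set of $S(j,\gamma)$ consists of $\varphi_{0,\gamma}^1$, the $2j$ functions $\varphi_{i,\gamma}^1,\varphi_{i,\gamma}^2$ with $1\le i\le j$, and the analogous $2j+1$ functions built from $\psi_j$, so $\dim S(j,\gamma)\le 4j+2\le 4(\lfloor\sqrt{\lambda}\rfloor+1)$. Since imposing orthogonality to a subspace $W$ decreases the codimension (relative to $E(\lambda)$) by at most $\dim W$, and codimension is subadditive under intersection,
\[ \codim S\le\sum_{\gamma\in\sN_\epsilon(X_g)}\dim S(\lfloor\sqrt{\lambda}\rfloor,\gamma)\le 4(\lfloor\sqrt{\lambda}\rfloor+1)\cdot N_\epsilon(X_g), \]
which is the stated bound. (Equivalently, $\codim S$ is the rank of the orthogonal projection of $E(\lambda)$ onto $\sum_\gamma S(\lfloor\sqrt{\lambda}\rfloor,\gamma)$, hence at most $\dim\sum_\gamma S(\lfloor\sqrt{\lambda}\rfloor,\gamma)$.)

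Next, take any $f\in S$ with $S\ne 0$. Then $f$ is a $\lambda$-eigenfunction, and for every $\gamma\in\sN_\epsilon(X_g)$ we have both $f\perp S(\lfloor\sqrt{\lambda}\rfloor,\gamma)$ and $\ell(\gamma)<2\epsilon\le 1$ (recall $\epsilon$ is in the range \eqref{eqn:epsilon} for which $\thin_g,\thick_g$ are defined), so Lemma \ref{lem:mass-2} applies on the standard collar of $\gamma$ and gives
\[ \int_{\collar(\gamma,w(\gamma)-1)}f^2\dif A\le 2\int_{\collar(\gamma,w(\gamma))\setminus\collar(\gamma,w(\gamma)-1)}f^2\dif A. \]
Now I would sum over $\gamma\in\sN_\epsilon(X_g)$. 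By Part (2) of Lemma \ref{lem:collar} the collars $\collar(\gamma,w(\gamma))$ are pairwise disjoint; hence the inner collars $\collar(\gamma,w(\gamma)-1)$ are pairwise disjoint with union exactly $\thin_g$, while the complementary annuli $\collar(\gamma,w(\gamma))\setminus\collar(\gamma,w(\gamma)-1)$ are pairwise disjoint and contained in $\thick_g$. Therefore
\[ \int_{\thin_g}f^2\dif A=\sum_{\gamma}\int_{\collar(\gamma,w(\gamma)-1)}f^2\dif A\le 2\sum_{\gamma}\int_{\collar(\gamma,w(\gamma))\setminus\collar(\gamma,w(\gamma)-1)}f^2\dif A\le 2\int_{\thick_g}f^2\dif A, \]
and adding $\int_{\thick_g}f^2\dif A$ to both sides (using $X_g=\thin_g\cup\thick_g$ with overlap of measure zero) gives $\int_{X_g}f^2\dif A\le 3\int_{\thick_g}f^2\dif A$, completing the proof.

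The real content of this lemma is already contained in Lemma \ref{lem:mass-2} (and, underneath it, in the ODE comparison of Lemma \ref{lem:mass-1}); what remains is bookkeeping, so there is no serious obstacle. The only two points deserving a moment's care are: the subadditivity-of-codimension estimate, where one must remember that the conditions are imposed \emph{inside} $E(\lambda)$ and may drop in rank, so the displayed quantity is genuinely an upper bound; and the geometric assertion that the inner collars exhaust $\thin_g$ while the complementary annuli sit pairwise disjointly inside $\thick_g$, which is precisely the disjointness clause of the Collar Lemma together with Definition \ref{def:thick-thin}. If one wants the statement for larger $\epsilon$ (so that $\ell(\gamma)$ could exceed $1$), one notes that $\gamma$ contributes to $\thin_g$ only when $w(\gamma)>1$, which already forces $\ell(\gamma)$ well below $2$, and that for $\ell(\gamma)\le 1$ Lemma \ref{lem:mass-2} applies verbatim while the narrow remaining range is handled identically with a slightly larger absolute constant.
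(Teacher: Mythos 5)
Your proposal is correct and follows essentially the same route as the paper: the paper takes $S=\ker\Pi$ for the orthogonal projection $\Pi\colon E(\lambda)\to\bigoplus_{\gamma}S(\lfloor\sqrt{\lambda}\rfloor,\gamma)$, which is exactly your intersection of orthogonal complements, with the same dimension count and the same summation of Lemma \ref{lem:mass-2} over the disjoint collars. No gaps.
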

\begin{proof}
    Consider the orthogonal projection 
    \[ \Pi: E(\lambda) \to \bigoplus_{\gamma\in \sN_\epsilon(X_g)}S(\lfloor\sqrt{\lambda}\rfloor,\gamma). \] 
    Then 
    \[ \codim(\ker\Pi) \leq \dim \Big(\bigoplus_{\gamma\in \sN_\epsilon(X_g)}S(\lfloor\sqrt{\lambda}\rfloor,\gamma)\Big) \leq 4(\lfloor\sqrt{\lambda}\rfloor+1)\cdot N_\epsilon(X_g), \]
    and by Lemma \ref{lem:mass-2}, for any $f\in \ker\Pi$ (if $\ker\Pi\neq 0$) we have 
    \begin{equation*}
        \begin{aligned}
        \int_{X_g} f^2 \dif A 
            &= \int_{\bigcup_{\gamma\in \sN_\epsilon(X_g)}\collar(\gamma,w(\gamma)-1)} f^2 \dif A + \int_{\thick_g} f^2 \dif A\\
            &\leq 2 \int_{\bigcup_{\gamma\in \sN_\epsilon(X_g)}\collar(\gamma,w(\gamma))\setminus\collar(\gamma,w(\gamma)-1)} f^2 \dif A + \int_{\thick_g} f^2 \dif A\\
            &\leq 3 \int_{\thick_g} f^2 \dif A. 
        \end{aligned}
    \end{equation*}
    Thus, the subspace $\ker\Pi\subset E(\lambda)$ satisfies the desired properties. 
\end{proof}

\section{Proof of Theorem \ref{thm:mt-1}}\label{section4}
In this section, we prove Theorem \ref{thm:mt-1}. 

Let $X_g$ be a closed hyperbolic surface of genus $g$. Recall that 
\[ \epsilon \in \Big(0, \arcsinh\frac{1}{\sinh2}\Big) \]
is a fixed constant, and $N_\epsilon(X_g)=\#\sN_\epsilon(X_g)$. By the general upper bound of the first nonzero eigenvalue (\eg \cite[Corollary 2.3]{Cheng1975} or \cite[Proposition 9.1]{WZ2025}), one has that as $g\to \infty$, 
\[
\lambda_1(X_g)\le \frac {1}4+o(1).
\]
Now we always assume $$\lambda_1(X_{g}) < 1$$ throughout this section by taking $g$ sufficiently large. 

\subsection{Rescaling the metric}
We first rescale the hyperbolic metric on $X_g$ by $\epsilon^{-2}$ to obtain a new Riemannian surface $X_{g,\epsilon}$. Then the 2-dimensional simply connected space form of curvature $-\epsilon^2$, denoted by $\mathbb H_\epsilon$, is the universal cover of $X_{g,\epsilon}$.
Let $\dist_\epsilon(\cdot,\cdot)$ denote the distance function on $X_{g,\epsilon}$, and for any $x\in X_{g,\epsilon}$, let $B_\epsilon(x,r)$ denote the geodesic ball of radius $r>0$ centered at $x$. 

Let $\Delta_\epsilon$ be the Laplacian on $X_{g,\epsilon}$. Then $\lambda$ is an eigenvalue of the Laplacian $\Delta$ on $X_g$ if and only if $\epsilon^2\lambda$ is an eigenvalue of $\Delta_\epsilon$. It follows that
\[
m(\lambda_1(X_{g,\epsilon}))=m(\lambda_1(X_g)).
\]
Hence, estimating the upper bound of $m(\lambda_1(X_g))$ is equivalent to estimating that of $m(\lambda_1(X_{g,\epsilon}))$. In the remainder of this section, we will work with $X_{g,\epsilon}$. Moreover, we use the notations 
$$\thick_{g,\epsilon},\ \thin_{g,\epsilon},\ \sA_{k,\epsilon},\ \text{and\ } \collar(\gamma,w)_\epsilon$$
to denote the images of 
$$\thick_g,\ \thin_g,\ \sA_k,\ \text{and\ } \collar(\gamma,w)$$
under the rescaling by $\epsilon^{-2}$, respectively. For any $x\in X_{g,\epsilon}$, the Gaussian curvature 
\begin{equation}\label{Gcur}
    \mathrm{cur}(x)=-\epsilon^2>-1,
\end{equation}
and by \eqref{injlb}, for any $x\in \thick_{g,\epsilon}$, 
\begin{equation}\label{injld1}
    \inj(x)\ge 1.
\end{equation}

Let $r>0$. We recall the notions of $r$-\emph{net} and $r$-\emph{separated set}. 
\begin{definition}\label{def:net-sset}Let $Y \subset X_{g,\epsilon}$ be a subset.
    \begin{enumerate}
        \item A set of points $\{x_1, \cdots, x_l\}$ is called an \emph{$r$-net} of $Y$ if for any $y \in Y$, there exists $x_i$ such that $\dist_\epsilon(x_i, y) \le r$. 
        \item A set of points $\{x_1, \cdots, x_l\}$ is called an \emph{$r$-separated set} if $\dist_\epsilon(x_i,x_j)\ge r$ for any $i\neq j$. 
        \item A set of points $\{x_1, \cdots, x_l\}$ is called an \emph{$r$-separated net} of $Y$ if it is both $r$-separated and an $r$-net of $Y$.
        \end{enumerate}
\end{definition}

Let $\sA_{k,\epsilon}$, $1\leq k\leq I$, be the components of $\thick_{g,\varepsilon}$. For all $r>0$ and $k\in[1,I]$, set
\begin{equation}\label{eqn:sA_kr}
\sA_{k,\epsilon}^r=\{x\in\sA_{k,\epsilon}|\ \dist_\epsilon(x,\thin_{g,\epsilon})\ge r\}.
\end{equation}
Note that $\sA_{k,\epsilon}^r$ might not be connected. 

\begin{lemma}\label{rnet}
    With the same notation as above, 
    if $r\ge 4$ and $\sA_{k,\epsilon}^r\neq\emptyset$, then $\sA_{k,\epsilon}^r$ admits an $r$-separated net of cardinality at most
	\[
	\max \bigg\{1,\, \frac{16\area(\sA_{k,\epsilon})}{\pi\cdot r}\bigg\}.
	\]
\end{lemma}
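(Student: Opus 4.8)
The plan is a packing argument; the only real content is a lower bound on the area of a metric ball centred deep in the thick part. First I would choose a maximal $r$-separated subset $P=\{x_1,\dots,x_l\}$ of $\sA_{k,\epsilon}^r$; by maximality $P$ is also an $r$-net of $\sA_{k,\epsilon}^r$, hence an $r$-separated net, and $l\ge 1$ since $\sA_{k,\epsilon}^r\ne\emptyset$. If $l=1$ we are done, so assume $l\ge 2$. Because the $x_i$ are $r$-separated, the balls $B_\epsilon(x_i,r/2)$ are pairwise disjoint; and for $y\in B_\epsilon(x_i,r/2)$ one has $\dist_\epsilon(y,\thin_{g,\epsilon})\ge\dist_\epsilon(x_i,\thin_{g,\epsilon})-\dist_\epsilon(x_i,y)>r/2>0$, so $B_\epsilon(x_i,r/2)\subset\thick_{g,\epsilon}$, and being connected and containing $x_i\in\sA_{k,\epsilon}$ it lies in $\sA_{k,\epsilon}$. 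Hence $l\cdot\min_i\area\big(B_\epsilon(x_i,r/2)\big)\le\area(\sA_{k,\epsilon})$, and the lemma reduces to the estimate
\[
\area\big(B_\epsilon(x_i,r/2)\big)\ \ge\ \frac{\pi r}{16}\qquad (1\le i\le l).
\]

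To prove this I would exhibit an embedded geodesic tube inside the ball. Because $l\ge 2$ there is $x_j\in P$ with $j\ne i$, so $\dist_\epsilon(x_i,x_j)\ge r$ and there is a minimizing geodesic from $x_i$ to $x_j$; let $\sigma_i\colon[0,L_0]\to X_{g,\epsilon}$ be its initial sub-arc of length $L_0:=\tfrac r2-\tfrac12$, again minimizing, which stays in $\sA_{k,\epsilon}$ since $\dist_\epsilon(x_i,\sigma_i(t))=t<r$. Fix a unit normal field $N_i$ along $\sigma_i$ and set $F_i(t,u)=\exp_{\sigma_i(t)}(uN_i(t))$ on $[0,L_0]\times[-\tfrac12,\tfrac12]$. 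As the curvature of $X_{g,\epsilon}$ equals $-\epsilon^2\le 0$, the geodesic $\sigma_i$ has no focal points, so $F_i$ is an immersion and in these Fermi coordinates the metric is $\dif u^2+\cosh^2(\epsilon u)\,\dif t^2$, with area density $\cosh(\epsilon u)\ge 1$. Moreover $\dist_\epsilon(x_i,F_i(t,u))\le t+|u|\le L_0+\tfrac12=\tfrac r2$, so the tube lies in $\overline{B_\epsilon(x_i,r/2)}$. Granting injectivity of $F_i$,
\[
\area\big(B_\epsilon(x_i,r/2)\big)\ \ge\ \int_0^{L_0}\!\!\int_{-1/2}^{1/2}\cosh(\epsilon u)\,\dif u\,\dif t\ \ge\ L_0\ =\ \frac{r-1}{2}\ \ge\ \frac{\pi r}{16},
\]
the last inequality holding since $r\ge 4>\tfrac{8}{8-\pi}$. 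Summing over $i$ gives $l\le \tfrac{16}{\pi r}\area(\sA_{k,\epsilon})$, which together with the case $l=1$ is the claimed bound.

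The step I expect to be the main obstacle is the injectivity of $F_i$ — that the width-$\tfrac12$ tube around $\sigma_i$ is embedded — and this is precisely where the bound $\inj\ge 1$ on $\thick_{g,\epsilon}$ from \eqref{injld1} is used. Suppose $F_i(t_1,u_1)=F_i(t_2,u_2)=:p$ with $(t_1,u_1)\ne(t_2,u_2)$. If $t_1=t_2$, then $u_1N_i(t_1)$ and $u_2N_i(t_1)$ lie in the injectivity ball at $\sigma_i(t_1)$ (radius $\ge 1>\tfrac12$), forcing $u_1=u_2$; so $t_1<t_2$, say. The two normal segments from $p$ give a path of length $\le|u_1|+|u_2|\le 1$ from $\sigma_i(t_1)$ to $\sigma_i(t_2)$, and since $\sigma_i$ is minimizing this forces $t_2-t_1\le 1$; hence the loop $\omega=(\,p\to\sigma_i(t_1)\to\sigma_i(t_2)\to p\,)$ has length $\le 2$. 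Lifting to the simply connected $\mathbb{H}_\epsilon$: if $\omega$ is null-homotopic, the two normal segments lift to perpendiculars from one lift $\tilde p$ of $p$ to the lift of $\sigma_i$, meeting it at $\tilde\sigma_i(t_1)$ and $\tilde\sigma_i(t_2)$ — impossible, as the foot of a perpendicular from a point to a geodesic in $\mathbb{H}_\epsilon$ is unique; if $\omega$ is homotopically nontrivial, then its geodesic representative at $p$ has length $\ge 2\inj(p)\ge 2$, which with $\ell(\omega)\le 2$ forces $\ell(\omega)=2$ and $\omega$ to be that smooth geodesic loop — impossible, since $\omega$ makes a right-angle turn at $\sigma_i(t_1)$ (here $|u_1|=\tfrac12\ne 0$). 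So $F_i$ is injective. The remaining ingredients (existence and the net property of a maximal $r$-separated set, the Fermi-coordinate formula, and the absence of focal points in nonpositive curvature) are routine, and I foresee no difficulty with them.
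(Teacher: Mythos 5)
Your proof is correct, and its skeleton --- a maximal $r$-separated set, the disjoint balls $B_\epsilon(x_i,r/2)\subset\sA_{k,\epsilon}$, and a lower bound of order $r$ on the area of each ball obtained from a geodesic segment of length about $r/2$ issuing from $x_i$ --- is exactly the paper's. The one genuine divergence is how that area bound is extracted from the segment. The paper simply places the $\lfloor r/2\rfloor$ pairwise disjoint balls $B_\epsilon(\alpha_i(n),1/2)$, $n=0,\dots,\lfloor r/2\rfloor-1$, centered at integer times along the minimizing segment; each lies in $\sA_{k,\epsilon}$, and since $\inj\ge1$ there and the curvature is $-\epsilon^2$, each has area $\area\big(B_{\H_\epsilon}(1/2)\big)=\frac{2\pi}{\epsilon^2}(\cosh\frac{\epsilon}{2}-1)\ge\frac{\pi}{4}$, giving $\area\big(B_\epsilon(x_i,r/2)\big)\ge\frac{\pi}{4}\lfloor r/2\rfloor\ge\frac{\pi r}{16}$ with no embeddedness question ever arising. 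You instead build a width-one Fermi tube around the segment, which forces you to prove the tube is embedded --- the step you correctly identify as the crux and which absorbs most of your write-up. Your embeddedness argument is sound (the null-homotopic case via uniqueness of the perpendicular foot in $\H_\epsilon$, the essential case via $\ell(\omega)\ge 2\inj(p)\ge 2$ together with the right-angle corner in the equality case), modulo the routine degenerate subcases $u_1=0$ or $u_2=0$ and the standard fact that an essential loop of minimal length based at $p$ is geodesic away from the base point. So both routes work and give comparable constants; the paper's disjoint-ball packing buys the same linear-in-$r$ bound while letting the injectivity-radius hypothesis act only through an elementary volume comparison, rather than through a tube-embedding argument.
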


\begin{proof}
    The proof is standard. Firstly, if there exists $x\in \sA_{k,\epsilon}^r$ such that $\sA_{k,\epsilon}^r\subset B_\epsilon(x,r)$, then the singleton set $\{x\}$ is an $r$-net of $\sA_{k,\epsilon}^r$. Otherwise, let 
    \[ \{x_1,\cdots,x_l\}\subset \sA_{k,\epsilon}^r, \quad l\ge 2 \] 
    be a maximal $r$-separated set. This is clearly an $r$-separated net of $\sA_{k,\epsilon}^r$. So we have 
    \[ B_\epsilon(x_i,r/2)\cap B_\epsilon(x_j,r/2)=\emptyset, \quad \Forall i\neq j. \] 
    Moreover, by the definition of $\sA_{k,\epsilon}^r$, we have $B_\epsilon(x_i, r/2) \subset \sA_{k,\epsilon}$, $\Forall 1 \le i \le l$, so
	\begin{equation}\label{ballAk}
	\sum_{i=1}^l \area\big(B_\epsilon(x_i,r/2)\big)\leq \area(\sA_{k,\epsilon}).
	\end{equation}
	Next, for each $1\le i\le l$, there exists $y_i\in \sA_{k,\epsilon}$ such that $\dist_\epsilon(x_i,y_i)= r/2$. Let $\alpha_i:[0,r/2] \to \thick_{g,\epsilon}$ be a shortest geodesic segment from $x_i$ to $y_i$ parameterized by the arc length. Then for each integer $n \in \{0,\cdots, \lfloor r/2 \rfloor - 1\}$, we have
	\[
	B_\epsilon(\alpha_i(n),1/2)\subset B_\epsilon(x_i, r/2) \subset \sA_{k,\epsilon}, 
	\]
	and these geodesic balls are pairwise disjoint. By \eqref{Gcur} and \eqref{injld1}, $\mathrm{cur}(\alpha_i(n)) = -\epsilon^2$ and $\inj(\alpha_i(n)) \ge 1$, which implies
	\[
	\area\big(B_\epsilon(\alpha_i(n),1/2)\big)= \area\big(B_{\H_\epsilon}(1/2)\big)=\frac{2\pi}{\epsilon^2}(\cosh\frac\epsilon2 -1)\ge \frac\pi 4, 
	\]
	where $B_{\mathbb H_\epsilon}(1/2)$ denotes the geodesic ball of radius $1/2$ in $\H_\epsilon$. The last inequality follows from the estimate $\cosh\epsilon/2 - 1 \ge \epsilon^2/8$. Thus, 
    \begin{align*}
    	\area\big(B_\epsilon(x_i,r/2)\big)
        &\ge \sum_{n=0}^{\lfloor r/2\rfloor - 1}\area\big(B_\epsilon(\alpha_i(n),1/2)\big) \\
        &\ge \frac \pi4\cdot \Big\lfloor \frac r2\Big\rfloor\ge \frac\pi{16}\cdot r.
    \end{align*}
	Combining this with \eqref{ballAk}, we obtain the desired bound. 
\end{proof}

Write $X_{g,\epsilon}= \H_\epsilon/\Gamma$, and let $K_{t,\epsilon}(\cdot,\cdot)$ denote the heat kernel on $X_{g,\epsilon}$. Then for any $f\in L^2(X_{g,\epsilon})$, 
\[ (e^{-t\Delta_\epsilon}f)(x) = \int_{X_{g,\epsilon}}K_{t,\epsilon}(x,y)f(y) \dif y. \] 
Moreover, let $k_{t,\epsilon}(\cdot,\cdot)$ denote the heat kernel on $\H_\epsilon$, then 
\begin{equation}\label{eqn:kernel}
    K_{t,\epsilon}(x,y) = \sum_{\gamma\in\Gamma}k_{t,\epsilon}(\gamma\tilde{x},\tilde{y}), 
\end{equation}
where $\tilde{x}$ and $\tilde{y}$ are lifts of $x$ and $y$, respectively. Since $k_{t,\epsilon}(\tilde{x},\tilde{y})$ depends only on $\dist_{\H_\epsilon}(\tilde{x},\tilde{y})$, we also write it as $k_{t,\epsilon}(\dist_{\H_\epsilon}(\tilde{x},\tilde{y}))$. Next, we present a standard estimate for the heat kernel on $X_{g,\epsilon}$ which will be used in the subsequent analysis.
\begin{lemma}\label{lem:heatest}
    There exists a universal constant $C_1>0$, independent of $\epsilon$, such that for any point $x \in X_{g,\epsilon}$ with $\inj(x)\ge 1$ and any $t \ge 1$, 
    \[
    \|K_{t,\epsilon}(x,\cdot)\|_{L^\infty(X_{g,\epsilon})}\le C_1\exp(4t).
    \]
\end{lemma}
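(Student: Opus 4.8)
The goal is to bound $\|K_{t,\epsilon}(x,\cdot)\|_{L^\infty}$ when $\inj(x) \ge 1$ and $t \ge 1$. The plan is to use the period formula \eqref{eqn:kernel}, estimating $K_{t,\epsilon}(x,y)$ by summing the $\mathbb{H}_\epsilon$-heat kernel over the orbit $\Gamma \tilde x$. For a fixed lift $\tilde x$ and $\tilde y$, we have $K_{t,\epsilon}(x,y) = \sum_{\gamma \in \Gamma} k_{t,\epsilon}(\dist_{\mathbb{H}_\epsilon}(\gamma \tilde x, \tilde y))$, so it suffices to bound this sum uniformly in $\tilde y$. Since $\inj(x)\ge 1$, the orbit points $\gamma \tilde x$ are $2$-separated in $\mathbb{H}_\epsilon$, so the balls $B_{\mathbb{H}_\epsilon}(\gamma\tilde x, 1)$ are pairwise disjoint. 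This lets me convert the sum over the orbit into an integral: group the $\gamma$ by the value $n = \lfloor \dist_{\mathbb{H}_\epsilon}(\gamma\tilde x, \tilde y)\rfloor$, bound the number of orbit points in each dyadic-distance shell by the area of an annulus in $\mathbb{H}_\epsilon$ divided by $\area(B_{\mathbb{H}_\epsilon}(1))$, and use monotonicity of the hyperbolic heat kernel in the distance variable.

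Concretely, I would first recall (or cite) the standard pointwise Gaussian-type upper bound for the heat kernel $k_{t,\epsilon}(r)$ on the space form $\mathbb{H}_\epsilon$ of curvature $-\epsilon^2$: there is a universal constant such that $k_{t,\epsilon}(r) \le C\, t^{-1}(1 + \tfrac{\epsilon r}{\text{?}})\cdots \exp(-r^2/(4t) - \epsilon^2 t/4 + \cdots)$ — the precise form matters less than the fact that for $t \ge 1$ it decays like $\exp(-r^2/(5t))$ times a polynomial factor, after absorbing the curvature-dependent exponential into the constant (here $\epsilon < 1$ so curvature is bounded, and the $\exp(c\epsilon^2 t)$ factors are dominated by $\exp(Ct)$). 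Since $\epsilon^2 \le 1$, all $\epsilon$-dependence in the comparison constant is controlled uniformly. The number of $\gamma \in \Gamma$ with $\dist_{\mathbb{H}_\epsilon}(\gamma\tilde x,\tilde y) \in [n, n+1]$ is at most $\area(B_{\mathbb{H}_\epsilon}(n+2))/\area(B_{\mathbb{H}_\epsilon}(1)) \le C' e^{\epsilon(n+2)} \le C' e^{n+2}$ by the disjoint-balls argument and the volume growth of $\mathbb{H}_\epsilon$ (using $\epsilon < 1$). Therefore
\[
K_{t,\epsilon}(x,y) \le \sum_{n=0}^\infty C' e^{n+2}\cdot \sup_{r \ge n} k_{t,\epsilon}(r) \le C'' \sum_{n=0}^\infty e^{n} \exp\Big(-\frac{n^2}{5t}\Big)\cdot p(t),
\]
where $p(t)$ is a polynomial factor coming from the short-range behavior of $k_{t,\epsilon}$.

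The remaining task is to show $\sum_{n \ge 0} e^n \exp(-n^2/(5t)) \cdot p(t) \ll \exp(4t)$ for $t \ge 1$. Completing the square, $e^n e^{-n^2/(5t)} = \exp\big(-\tfrac{1}{5t}(n - \tfrac{5t}{2})^2 + \tfrac{5t}{4}\big)$, so the sum is bounded by $e^{5t/4}$ times a convergent Gaussian sum of width $O(\sqrt t)$, giving $\sum \ll \sqrt t\, e^{5t/4}$. Absorbing $\sqrt t \cdot p(t)$ and all other $t$-polynomial factors into the exponential (since $t \ge 1$, any polynomial in $t$ is $\le C\exp(\delta t)$ for the slack $4 - 5/4 > 0$), I get the claimed bound $C_1 \exp(4t)$; indeed there is considerable room, since $5/4 < 4$.

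**Main obstacle.** The only real subtlety is obtaining a clean, $\epsilon$-uniform pointwise upper bound for the space-form heat kernel $k_{t,\epsilon}$ — the explicit Millson/McKean formulas carry curvature-dependent prefactors and an $e^{-\epsilon^2 t/4}$ decay, and one must check these are harmless when $0 < \epsilon < 1$. An alternative that sidesteps explicit formulas is to scale: $k_{t,\epsilon}$ on $\mathbb{H}_\epsilon$ (curvature $-\epsilon^2$) relates by the rescaling $x \mapsto \epsilon x$ to the standard hyperbolic heat kernel $k_{\epsilon^2 t}$ on $\mathbb{H}^2$, namely $k_{t,\epsilon}(r) = \epsilon^2 k^{\mathbb{H}^2}_{\epsilon^2 t}(\epsilon r)$; then one only needs the classical bound on $k^{\mathbb{H}^2}$, and since $\epsilon^2 t$ may be small one uses the small-time on-diagonal bound $k^{\mathbb{H}^2}_s(0) \ll s^{-1}$, which reproduces exactly the $t^{-1}$-type short-range factor above. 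Either way the estimate is robust; the generous gap between $5/4$ and $4$ in the exponent means no delicate bookkeeping of constants is needed.
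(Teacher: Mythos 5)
Your proposal is correct and follows essentially the same route as the paper: the period formula, a Gaussian-type bound on the space-form heat kernel obtained (as in your "alternative") from the scaling $k_{t,\epsilon}(r)=\epsilon^2 k^{\mathbb{H}^2}_{\epsilon^2 t}(r)$ together with the standard bound on $\mathbb{H}^2$, a lattice-point count of order $e^{m}$ in unit distance shells via disjoint injectivity balls, and then summation of $\sum_m e^{m}e^{-m^2/(ct)}\ll e^{4t}$. The only cosmetic difference is that you complete the square where the paper splits the sum at $m=4t$; both yield the stated bound with room to spare.
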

\begin{proof}
    Since
    $$k_{t,\epsilon}(\tilde x,\tilde y)=k_{t,\epsilon}(\dist_{\H_\epsilon}(\tilde x,\tilde y))=\epsilon^2 k_{\epsilon^2 t,1}(\dist_{\H_\epsilon}(\tilde x,\tilde y)),$$
    it follows from \cite[Lemma 3.13]{Bergeron2016} that for all $t\geq1$, 
    \begin{equation}\label{eqn:kernelbound}
        k_{t,\epsilon}(\tilde{x},\tilde{y})\ll \exp(-\frac{\dist_{\H_\epsilon}(\tilde x,\tilde y)^2}{8\epsilon^2t})\ll \exp(-\frac{\dist_{\H_\epsilon}(\tilde x,\tilde y)^2}{8t}) , \quad \Forall \tilde{x}, \tilde{y}\in\H_\epsilon. 
    \end{equation}
    For any $m\geq0\in \N$ and any $\tilde{x}, \tilde{y}\in\H_\epsilon$, since $\inj(x)\ge 1$,
    \begin{equation}\label{eqn:count}
        \begin{aligned}
            &\#\{\gamma\in\Gamma |\ m\leq \dist_{\H_\epsilon}(\gamma\tilde{x}, \tilde{y})<m+1 \}\leq \frac{ \area\big( B_{\H_\epsilon}( m+1+1/2 ) \big) }{\area\big( B_{\H_\epsilon}( 1/2 ) \big)}\\
            &\ll \area\big( B_{\H_\epsilon}( m+1+1/2 ) \big)\ll \area\big( B_{\H}( m+1+1/2)\big)\ll \exp(m). 
        \end{aligned}
    \end{equation}
    Therefore, by \eqref{eqn:kernel}, \eqref{eqn:kernelbound} and \eqref{eqn:count}, 
    \begin{align*}
         K_{t,\epsilon}(x,y) 
        &\ll \sum_{m=0}^\infty \exp(m-\frac{m^2}{8t})\\
        &\leq (4t+1)\exp(2t) + \sum_{m\geq 4t} \exp(8t-m)\\
        &\ll \exp(4t). 
    \end{align*}
    The proof is complete. 
\end{proof}

For simplicity, we write $\lambda_{1,\epsilon}$ for $\lambda_1(X_{g,\epsilon})=\epsilon^2\lambda_1(X_g)$, and omit $X_g$ in $\lambda_1(X_g)$, $\sN_\epsilon(X_g)$, and $N_\epsilon(X_g)$ for the remainder of this section. Set 
\begin{equation}\label{r1r2}
	r_1=c\log\log\Big(\frac{K g}{N_\epsilon+1}\Big),\qquad r_2=c\log\Big(\frac{K g}{N_\epsilon+1}\Big),
\end{equation}
where $c > 0$ is a universal constant that will be chosen later. We assume that $K \geq 10$ is a sufficiently large universal constant such that $\frac{K g}{N_\epsilon + 1}$ is large enough for our analysis, particularly to ensure that $r_1\gg1$ and $r_2/r_1 \gg 1$. 

\subsection{Reduce the problem to the thick part}
In this subsection, we show that it suffices to consider only the thick part of $X_{g,\epsilon}$. Set 
\begin{equation*}
    X_{g,\epsilon}^{r_1} := \{x\in \thick_{g,\epsilon}|\ \dist_{\epsilon}(x,\thin_{g,\epsilon})\ge r_1\}. 
\end{equation*}
First, we have the following lemma. 
\begin{lemma}\label{cardr1}
    There exists an $r_1$-net $\{x_1,\cdots,x_l\}$ of $X_{g,\epsilon}^{r_1}$ such that 
    \[ l \ll \frac{1}{\epsilon^2}\frac g{\log\log \frac{10g}{N_\epsilon+1}}. \]
\end{lemma}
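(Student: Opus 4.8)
The plan is to assemble the net component by component using Lemma \ref{rnet}. Write $\thick_{g,\epsilon}=\sA_{1,\epsilon}\sqcup\cdots\sqcup\sA_{I,\epsilon}$; by \eqref{eqn:sA_kr} the set $X_{g,\epsilon}^{r_1}$ decomposes as the disjoint union $\bigsqcup_{k=1}^{I}\sA_{k,\epsilon}^{r_1}$, so the union of $r_1$-nets of the individual nonempty pieces $\sA_{k,\epsilon}^{r_1}$ is an $r_1$-net of $X_{g,\epsilon}^{r_1}$. The standing assumption that $K$ is large forces $r_1\ge 4$, so Lemma \ref{rnet} applies with $r=r_1$ and gives, for each nonempty $\sA_{k,\epsilon}^{r_1}$, an $r_1$-net of cardinality at most $\max\{1,\tfrac{16\area(\sA_{k,\epsilon})}{\pi r_1}\}\le 1+\tfrac{16\area(\sA_{k,\epsilon})}{\pi r_1}$. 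Summing over $k$ and using $\sum_k\area(\sA_{k,\epsilon})=\area(\thick_{g,\epsilon})\le\area(X_{g,\epsilon})$, I get an $r_1$-net $\{x_1,\dots,x_l\}$ of $X_{g,\epsilon}^{r_1}$ with $l\le I+\tfrac{16\area(X_{g,\epsilon})}{\pi r_1}$.

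It remains to estimate the two terms against the target $\tfrac{1}{\epsilon^2}\cdot\tfrac{g}{\log\log\frac{10g}{N_\epsilon+1}}$. For the area term, Gauss--Bonnet together with the rescaling gives $\area(X_{g,\epsilon})=\epsilon^{-2}\area(X_g)=\tfrac{4\pi(g-1)}{\epsilon^2}$, so $\tfrac{16\area(X_{g,\epsilon})}{\pi r_1}=\tfrac{64(g-1)}{\epsilon^2 r_1}$; since $K\ge 10$ and $t\mapsto\log\log t$ is increasing on $(1,\infty)$, we have $r_1=c\log\log\tfrac{Kg}{N_\epsilon+1}\ge c\log\log\tfrac{10g}{N_\epsilon+1}$, which bounds the area term by $\tfrac{64}{c}\cdot\tfrac{1}{\epsilon^2}\cdot\tfrac{g}{\log\log\frac{10g}{N_\epsilon+1}}$. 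For the term $I$, I would use that $\thick_g$ has the same number of connected components as $X_g$ with the geodesics of $\sN_\epsilon(X_g)$ removed, and that cutting a connected surface along $N$ disjoint simple closed curves produces at most $N+1$ pieces, hence $I\le N_\epsilon+1$.

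The one point that needs a genuine (if elementary) argument is that $I\le N_\epsilon+1$ is itself $\ll\tfrac{1}{\epsilon^2}\cdot\tfrac{g}{\log\log\frac{10g}{N_\epsilon+1}}$; unlike the area term this is not ``obviously small'' when $N_\epsilon$ is close to linear in $g$. Part (1) of Lemma \ref{lem:collar} is what makes it work: it gives $N_\epsilon\le 3g-3$, hence $\tfrac{10g}{N_\epsilon+1}\ge\tfrac{10g}{3g-2}>e$, so $\log\log\tfrac{10g}{N_\epsilon+1}$ is well-defined and positive. Writing $y=\tfrac{10g}{N_\epsilon+1}$ and using $\log\log y<\log y<y$ for $y>e$ gives $(N_\epsilon+1)\log\log\tfrac{10g}{N_\epsilon+1}=10g\cdot\tfrac{\log\log y}{y}<10g$, i.e.\ $I\le N_\epsilon+1\le\tfrac{10g}{\log\log\frac{10g}{N_\epsilon+1}}\le\tfrac{10}{\epsilon^2}\cdot\tfrac{g}{\log\log\frac{10g}{N_\epsilon+1}}$ since $\epsilon<1$. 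Adding this to the bound on the area term yields $l\le(10+64/c)\cdot\tfrac{1}{\epsilon^2}\cdot\tfrac{g}{\log\log\frac{10g}{N_\epsilon+1}}$ with a universal implied constant, which completes the proof. I do not anticipate any real difficulty beyond bookkeeping here --- the only care needed is to check that $c$ and $K$ are universal and that the standing assumptions indeed force $r_1\ge 4$ and $r_1\ge c\log\log\tfrac{10g}{N_\epsilon+1}>0$.
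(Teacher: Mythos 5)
Your proof is correct and follows essentially the same route as the paper: decompose $X_{g,\epsilon}^{r_1}$ into the pieces $\sA_{k,\epsilon}^{r_1}$, apply Lemma \ref{rnet} to each, and sum using $I\le N_\epsilon+1$ and $\sum_k\area(\sA_{k,\epsilon})\le\area(X_{g,\epsilon})=4\pi\epsilon^{-2}(g-1)$. The only difference is that you spell out explicitly why $N_\epsilon+1\ll \epsilon^{-2}g/\log\log\frac{10g}{N_\epsilon+1}$ (via $N_\epsilon\le 3g-3$ and $\log\log y<y$), a step the paper absorbs into the final $\ll$.
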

\begin{proof}
    By definition we have 
    \[ X_{g,\epsilon}^{r_1} = \sA_{1,\epsilon}^{r_1}\sqcup\sA_{2,\epsilon}^{r_1}\sqcup\cdots\sqcup\sA_{I,\epsilon}^{r_1},\quad I\geq 1, \]
    where $\sA_{k,\epsilon}^{r_1}$ is defined by \eqref{eqn:sA_kr}. Clearly, the union of $r_1$-nets of $\sA_{k,\epsilon}^{r_1}$'s is an $r_1$-net of $X_{g,\epsilon}^{r_1}$. Thus by Lemma \ref{rnet},
    \begin{align*}
        l&\le \sum_{k=1}^I \max \bigg\{1,\frac{16\area(\sA_{k,\epsilon})}{\pi\cdot r_1}\bigg\}
        \le \sum_{k=1}^I \bigg(1+\frac{16\area(\sA_{k,\epsilon})}{\pi\cdot r_1}\bigg)\\
        &\le N_\epsilon+1+\frac{64\pi\epsilon^{-2}(g-1)}{\pi\cdot r_1}\ll \frac{1}{\epsilon^2}\frac g{\log\log \frac{10g}{N_\epsilon+1}},
    \end{align*}
    where we use the bounds
    \[
    I\le N_\epsilon+1 \text{\quad and\quad} \sum_{k=1}^I\area(\sA_{k,\epsilon})\le \area(X_{g,\epsilon})=4\pi\epsilon^{-2}(g-1).
    \]
    The proof is complete. 
\end{proof}

Next, we extend $\{x_1,\cdots,x_l\}$ to a maximal 1-separated set of $X_{g,\epsilon}$, denoted by
\[
\{x_1,\cdots,x_l,x_{l+1},\cdots, x_s\}
\]
where $l<s\in \mathbb{Z}^{+}$. For each $i\in \{1,\cdots,s\}$, define
\[
V_i:=\big\{ y\in X_{g,\epsilon}|\ \dist_\epsilon(y,x_i)\le \dist_\epsilon(y, x_j),\ \forall j\in\{1,\cdots, s\} \big\}.
\]
Since $\{x_1,\cdots,x_s\}$ is a 1-separated set, 
\[
B_\epsilon(x_i,1/2)\subset V_i\subset B_\epsilon(x_i,1),\quad \forall i\in\{1,\cdots, s\}.
\]
For each $i\in \{1,\cdots,l\}$, set 
\[
\psi_i=\frac 1{\sqrt{\area(V_i)}} \textbf{1}_{\mathring{V}_i},
\]
where $\textbf{1}_{\mathring{V}_i}$ denotes the indicator function of the interior $\mathring{V}_i$ of $V_i$. Then $\norm{\psi_i}_{L^2}=1$. Moreover, $\Forall 1\leq i\leq l$, since $\inj(x_i) \ge 1$, it follows that
\begin{equation}\label{areaVi}
    \area(V_i)\ge \area\big(B_\epsilon(x_i,1/2)\big)=\frac{2\pi}{\epsilon^2}(\cosh\frac\epsilon2 -1)\ge \frac\pi 4. 
\end{equation}
Let $P$ be the orthogonal projection from $L^2(X_{g,\epsilon})$ to $\mathrm{span}\{\psi_1,\cdots,\psi_l\}^{\bot}$, \ie 
\[
Pf=f-\sum_{k=1}^l \langle f,\psi_k\rangle \psi_k,\quad \forall f\in L^2(X_{g,\epsilon}).
\]
Let $\delta_x$ denote the Dirac delta distribution at $x$, then $P\delta_x$ is given as
\[
P\delta_x=\delta_x-\sum_{k=1}^l \psi_k(x)\psi_k.
\] 

Following the ideas of \cite{LM2024, JTYZZ2021}, we will consider the multiplicity of the eigenvalue $e^{-r_1\lambda_{1,\epsilon}}$ in the spectrum of the operator $Pe^{-r_1\Delta_\epsilon}P$. First, we have 
\begin{lemma}\label{lem:subspace}
    If $m(\lambda_{1,\epsilon}) > l + 4N_\epsilon$, then there exists a subspace $S \subset E(\lambda_{1,\epsilon})$ with 
    \begin{equation*}
        \dim S \ge m(\lambda_{1,\epsilon})-l-4N_\epsilon,
    \end{equation*}
    such that each $\varphi\in S$ is an eigenfunction of $Pe^{-r_1\Delta_\epsilon}P$ with eigenvalue $e^{-r_1\lambda_{1,\epsilon}}$, and 
    \begin{equation}\label{intcont}
    	\int_{X_{g,\epsilon}}\varphi^2 \mathrm dA\leq 3 \int_{\thick_{g,\epsilon}}\varphi^2 \mathrm dA.
    \end{equation}
\end{lemma}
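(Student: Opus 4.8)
The plan is to intersect the mass‑concentration subspace produced by Lemma~\ref{lem:mass} with the orthogonal complement of $\mathrm{span}\{\psi_1,\dots,\psi_l\}$, and then to observe that on the resulting subspace the operator $Pe^{-r_1\Delta_\epsilon}P$ acts simply by the scalar $e^{-r_1\lambda_{1,\epsilon}}$.

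First I would record two elementary reductions. Since we are assuming $\lambda_1 := \lambda_1(X_g) < 1$, we have $\lfloor\sqrt{\lambda_1}\rfloor = 0$; moreover the eigenspace $E(\lambda_1)$ of $\Delta$ on $X_g$ is literally the same space of functions as the eigenspace $E(\lambda_{1,\epsilon})$ of $\Delta_\epsilon$ on $X_{g,\epsilon}$, and the ratio between $\int f^2\,\mathrm dA$ taken over the whole surface and $\int f^2\,\mathrm dA$ taken over the thick part is invariant under the $\epsilon^{-2}$‑rescaling. Applying Lemma~\ref{lem:mass} with $\lambda=\lambda_1$ therefore gives a subspace $S_0\subseteq E(\lambda_{1,\epsilon})$ with $\dim S_0\ge m(\lambda_{1,\epsilon})-4(\lfloor\sqrt{\lambda_1}\rfloor+1)N_\epsilon=m(\lambda_{1,\epsilon})-4N_\epsilon$ on which inequality \eqref{intcont} holds for every element.

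Next I would set $S:=S_0\cap\mathrm{span}\{\psi_1,\dots,\psi_l\}^{\bot}$. Imposing orthogonality to the at most $l$‑dimensional space $\mathrm{span}\{\psi_1,\dots,\psi_l\}$ lowers the dimension by at most $l$, so $\dim S\ge\dim S_0-l\ge m(\lambda_{1,\epsilon})-l-4N_\epsilon$, which is positive under the hypothesis $m(\lambda_{1,\epsilon})>l+4N_\epsilon$. Since $S\subseteq S_0$, each $\varphi\in S$ still satisfies \eqref{intcont}. Finally, for $\varphi\in S$ we have $P\varphi=\varphi$ because $\varphi\perp\psi_k$ for all $1\le k\le l$; since $\varphi$ is a $\Delta_\epsilon$‑eigenfunction with eigenvalue $\lambda_{1,\epsilon}$, the heat semigroup gives $e^{-r_1\Delta_\epsilon}\varphi=e^{-r_1\lambda_{1,\epsilon}}\varphi$, which is again orthogonal to every $\psi_k$ and hence fixed by $P$; combining these, $Pe^{-r_1\Delta_\epsilon}P\varphi=e^{-r_1\lambda_{1,\epsilon}}\varphi$, as required.

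I do not expect a genuine obstacle here; the statement is essentially a dimension count. The only points demanding a little care are the harmless identification of the $\lambda_1$‑eigenspace across the rescaling (so that Lemma~\ref{lem:mass} may be invoked) together with the scale‑invariance of the concentration inequality, and the verification that $Pe^{-r_1\Delta_\epsilon}P$ really acts as multiplication by $e^{-r_1\lambda_{1,\epsilon}}$ on vectors lying in $E(\lambda_{1,\epsilon})\cap\mathrm{span}\{\psi_1,\dots,\psi_l\}^{\bot}$, which is immediate from $P^2=P$ and the fact that the heat flow preserves each eigenspace.
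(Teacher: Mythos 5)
Your proposal is correct and follows essentially the same route as the paper: both apply Lemma~\ref{lem:mass} with $\lfloor\sqrt{\lambda_1}\rfloor=0$, transport the resulting subspace through the rescaling, cut down by the at most $l$-dimensional space $\mathrm{span}\{\psi_1,\dots,\psi_l\}$ (your intersection with the orthogonal complement is exactly the kernel of the projection used in the paper), and then check that $Pe^{-r_1\Delta_\epsilon}P$ acts as the scalar $e^{-r_1\lambda_{1,\epsilon}}$ on the result. No gaps.
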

\begin{proof}
    By Lemma \ref{lem:mass}, there exists a subspace $S_1\subset E(\lambda_1)$ with 
    \[
    \dim S_1\ge m(\lambda_1) - 4(\lfloor\sqrt\lambda_1\rfloor+1)N_\epsilon = m(\lambda_1) - 4N_\epsilon,
    \]
    such that every $\varphi\in S_1$ satisfies the inequality $$\int_{X_{g}}\varphi^2 \mathrm dA\leq 3 \int_{\thick_{g}}\varphi^2 \mathrm dA.$$ 
    Since each $\varphi \in E(\lambda_1)$ corresponds to an eigenfunction $\varphi_\epsilon \in E(\lambda_{1,\epsilon})$ under the rescaling by $\epsilon^{-2}$, we obtain a subspace $S_{1,\epsilon} \subset E(\lambda_{1,\epsilon})$ with
    $$\dim S_{1,\epsilon}=\dim S_1\ge m(\lambda_1) - 4N_\epsilon,$$
    such that every function in $S_{1,\epsilon}$ satisfies the inequality \eqref{intcont}.
    
    Next, consider the orthogonal projection
    \[
    \Pi:S_{1,\epsilon}\to \mathrm{Span}\{\psi_1,\cdots,\psi_l\}.
    \]
    Then 
    \[
    \dim(\ker \Pi) \ge\dim S_{1,\epsilon}-l\ge m(\lambda_1) - l - 4N_\epsilon,
    \]
    and every $\varphi \in \ker \Pi$ satisfies
    \[
    P\varphi=(\mathrm{Id}-\Pi)\varphi=\varphi,
    \]
    which implies
    \[
    Pe^{-r_1\Delta_\epsilon}P\varphi=Pe^{-r_1\Delta_\epsilon}\varphi=P(e^{-r_1\lambda_{1,\epsilon}}\varphi)=e^{-r_1\lambda_{1,\epsilon}}\varphi.
    \]
    Therefore, the subspace $\ker \Pi$ satisfies the desired properties.
\end{proof}

Let $\{\varphi_j\}_{j=1}^\infty$ be an orthonormal basis of $L^2(X_{g,\epsilon})$ consisting of eigenfunctions of the self-adjoint compact operator $Pe^{-r_{1,\epsilon}\Delta}P$, with the associated eigenvalues $\{\Lambda_j\}_{j=1}^\infty$. According to Lemma \ref{lem:subspace}, one may choose this basis such that
\begin{equation}\label{eqn:basis}
    \mathrm{Span}\{\varphi_1,\cdots, \varphi_{\dim S}\}=S \text{\quad and\quad} \Lambda_1=\cdots=\Lambda_{\dim S}=e^{-r_1\lambda_{1,\epsilon}}. 
\end{equation}
To give an upper bound of $m$, it suffices to give an upper bound of $\dim S$. Set 
\begin{equation*}
    m' = \dim S \text{\quad and\quad} n = \lfloor r_2/r_1\rfloor. 
\end{equation*} 
We have the following two lemmas. 
\begin{lemma}\label{lem:norm}
    Let $\delta_x$ denote the Dirac delta distribution at $x$. Then 
    \begin{equation*}
        \norm{(Pe^{-r_1\Delta_\epsilon}P)^{n+1}\delta_x}^2_{L^2(X_{g,\epsilon})} = \sum_{j=1}^{\infty} \Lambda_j^{2n+2}\varphi_j^2. 
    \end{equation*}
\end{lemma}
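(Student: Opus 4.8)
The plan is to expand everything in the orthonormal eigenbasis $\{\varphi_j\}$ of the self-adjoint compact operator $T \df Pe^{-r_1\Delta_\epsilon}P$ and track how the Dirac distribution at $x$ propagates under iterates of $T$. First I would recall that, as a distribution, $\delta_x$ has the formal expansion $\delta_x = \sum_{j=1}^\infty \varphi_j(x)\varphi_j$, meaning that for any test function $h\in L^2(X_{g,\epsilon})$ one has $\langle \delta_x, h\rangle = h(x) = \sum_j \varphi_j(x)\langle \varphi_j, h\rangle$; this is just the reproducing-kernel identity for the basis. Since $T$ is bounded and self-adjoint with $T\varphi_j = \Lambda_j \varphi_j$, applying $T^{n+1}$ term by term gives $T^{n+1}\delta_x = \sum_{j=1}^\infty \Lambda_j^{n+1}\varphi_j(x)\varphi_j$, and this series converges in $L^2$ because the smoothing from the heat semigroup $e^{-r_1\Delta_\epsilon}$ (with $r_1\gg 1$) makes the eigenvalues $\Lambda_j$ decay rapidly — indeed $|\Lambda_j| \le e^{-r_1\lambda_j}$ and the Weyl law forces $\sum_j \Lambda_j^{2}\varphi_j(x)^2 = K_{r_1,\epsilon}^{P}(x,x) < \infty$, so a fortiori the $(n+1)$-st power converges.

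Next I would compute the $L^2$-norm of $T^{n+1}\delta_x$ directly from this expansion. By orthonormality of the $\varphi_j$,
\begin{equation*}
    \norm{T^{n+1}\delta_x}_{L^2(X_{g,\epsilon})}^2 = \Big\langle \sum_{j}\Lambda_j^{n+1}\varphi_j(x)\varphi_j,\ \sum_{k}\Lambda_k^{n+1}\varphi_k(x)\varphi_k \Big\rangle = \sum_{j=1}^\infty \Lambda_j^{2n+2}\varphi_j(x)^2,
\end{equation*}
which is exactly the claimed identity (the statement writes $\varphi_j^2$ for $\varphi_j(x)^2$, evaluated at the point $x$). The only genuine subtlety is making the distributional manipulations rigorous: $\delta_x$ is not in $L^2$, so one should argue that $T\delta_x$ already lies in $L^2$ — in fact in $C^\infty$ — because $e^{-r_1\Delta_\epsilon}$ has a smooth kernel $K_{r_1,\epsilon}(x,\cdot)$, hence $Pe^{-r_1\Delta_\epsilon}P\delta_x = P e^{-r_1\Delta_\epsilon}(P\delta_x)$ is the $L^2$-projection of the smooth function $y\mapsto \int K_{r_1,\epsilon}(y,z)(P\delta_x)(z)\dif z$, and $P\delta_x = \delta_x - \sum_{k=1}^l \psi_k(x)\psi_k$ pairs against $K_{r_1,\epsilon}(y,\cdot)$ perfectly well. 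Once $T\delta_x \in L^2$, the remaining iterates and the Parseval computation are entirely routine.

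I expect the main obstacle to be purely expository rather than mathematical: one must state cleanly in what sense the identity holds (pointwise in $x$, as an equality of the $L^2$-norm of a genuine $L^2$-function with a convergent nonnegative series) and justify interchanging $T^{n+1}$ with the infinite sum, which follows from the spectral theorem applied to the bounded self-adjoint operator $T$ together with the trace-class / Hilbert–Schmidt smoothing just described. No delicate estimates are needed here; the quantitative heat-kernel bounds of Lemma \ref{lem:heatest} are reserved for the later steps that bound $\dim S = m'$ via $\sum_{j} \Lambda_j^{2n+2}\varphi_j(x)^2$. So the proof is short: expand $\delta_x$ in the eigenbasis, apply $T^{n+1}$, invoke Parseval.
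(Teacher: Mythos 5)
Your argument is correct and is essentially the paper's proof: both identify $(Pe^{-r_1\Delta_\epsilon}P)^{n+1}\delta_x$ with $\sum_j \Lambda_j^{n+1}\varphi_j(x)\varphi_j$ and conclude by Parseval, the only cosmetic difference being that the paper obtains this identification by pairing against an arbitrary $f\in L^2$ and using self-adjointness (which sidesteps the formal expansion of $\delta_x$ and the convergence discussion you include). Your side remark that $|\Lambda_j|\le e^{-r_1\lambda_j}$ is not literally the spectrum of $Pe^{-r_1\Delta_\epsilon}P$ (one only gets domination via min--max), but nothing in the proof depends on it.
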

\begin{proof}
    The claim is based on a direct computation. For any $f\in L^2(X_{g,\epsilon})$, suppose that 
    \[ f = \sum_{j=1}^\infty a_j\varphi_j,\quad a_j\in\R, \] 
    then since $Pe^{-r_1\Delta_\epsilon}P$ is self-adjoint, 
    \begin{align*}
        &\big\langle (Pe^{-r_1\Delta_\epsilon}P)^{n+1}\delta_x,\ f\big\rangle
         = ((Pe^{-r_1\Delta_\epsilon}P)^{n+1}f)(x)\\
        & = \sum_{j=1}^\infty a_j\Lambda_j^{n+1}\varphi_j(x) = \Big\langle \sum_{j=1}^\infty \Lambda_j^{n+1}\varphi_j(x)\varphi_j,\ f\Big\rangle.
    \end{align*}
    Thus, we have 
    \begin{equation*}
        \norm{(Pe^{-r_1\Delta_\epsilon}P)^{n+1}\delta_x}^2_{L^2(X_{g,\epsilon})}
         = \Big\| \sum_{j=1}^{\infty}\Lambda_j^{n+1}\varphi_j(x)\varphi_j \Big\|^2_{L^2(X_{g,\epsilon})}
         = \sum_{j=1}^{\infty} \Lambda_j^{2n+2}\varphi_j^2, 
    \end{equation*}
    as desired. 
\end{proof}
\begin{lemma}\label{1mul}
    With the same notation as above, 
    \begin{equation*}
        m'\cdot e^{-r_1\lambda_{1,\epsilon}(2n+2)} \leq 3  \int_{\thick_{g,\epsilon}}  \norm{(Pe^{-r_1\Delta_\epsilon}P)^{n+1}\delta_x}^2_{L^2(X_{g,\epsilon})} \mathrm dA_x. 
    \end{equation*}
\end{lemma}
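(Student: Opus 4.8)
The plan is to unwind the right-hand side via Lemma \ref{lem:norm} and then bound it from below by retaining only the contributions of the $m'$ eigenfunctions spanning $S$, exploiting their $L^2$-concentration on the thick part recorded in \eqref{intcont}.

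Concretely, I would first invoke Lemma \ref{lem:norm} to write, for each $x$,
\[
\norm{(Pe^{-r_1\Delta_\epsilon}P)^{n+1}\delta_x}^2_{L^2(X_{g,\epsilon})} = \sum_{j=1}^{\infty}\Lambda_j^{2n+2}\varphi_j(x)^2,
\]
and then integrate this over $\thick_{g,\epsilon}$, interchanging sum and integral (legitimate by monotone convergence, since every term is nonnegative and measurable), obtaining $\sum_{j\ge 1}\Lambda_j^{2n+2}\int_{\thick_{g,\epsilon}}\varphi_j^2\,\mathrm dA$. Discarding all terms with $j>m'$ only decreases this quantity. For $1\le j\le m'$, \eqref{eqn:basis} gives $\Lambda_j = e^{-r_1\lambda_{1,\epsilon}}$, hence $\Lambda_j^{2n+2} = e^{-r_1\lambda_{1,\epsilon}(2n+2)}$; and since each such $\varphi_j$ lies in $S$, the concentration estimate \eqref{intcont} together with the normalization $\norm{\varphi_j}_{L^2(X_{g,\epsilon})}=1$ yields $\int_{\thick_{g,\epsilon}}\varphi_j^2\,\mathrm dA\ge \tfrac13\int_{X_{g,\epsilon}}\varphi_j^2\,\mathrm dA=\tfrac13$. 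Summing over $j=1,\dots,m'$ then produces the lower bound $\tfrac{m'}{3}e^{-r_1\lambda_{1,\epsilon}(2n+2)}$, and multiplying by $3$ gives the asserted inequality.

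There is essentially no obstacle here: the argument is a direct assembly of Lemma \ref{lem:norm}, the structure of the basis recorded in \eqref{eqn:basis}, and the mass estimate \eqref{intcont}, the only point requiring a word of care being the termwise integration, which is immediate from nonnegativity.
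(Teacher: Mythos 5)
Your proposal is correct and is essentially the paper's own argument run in the reverse direction: the paper starts from $m'=\int_{X_{g,\epsilon}}\sum_{j=1}^{m'}\varphi_j^2$, applies \eqref{intcont}, adds the nonnegative terms $j>m'$, and finishes with Lemma \ref{lem:norm}, while you start from the right-hand side and discard those terms. The ingredients (Lemma \ref{lem:norm}, \eqref{eqn:basis}, \eqref{intcont}, normalization of the $\varphi_j$) and their use are identical.
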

\begin{proof}
    By Lemma \ref{lem:subspace} and Lemma \ref{lem:norm}, we have 
    \begin{align*}
        m'\cdot e^{-r_1\lambda_{1,\epsilon}(2n+2)}
        & = e^{-r_1\lambda_{1,\epsilon}(2n+2)}\int_{X_{g,\epsilon}} \sum_{j=1}^{m'} \varphi_j^2 \dif A \\
        &\le 3 e^{-r_1\lambda_{1,\epsilon}(2n+2)}\int_{\thick_{g,\epsilon}} \sum_{j=1}^{m'} \varphi_j^2 \dif A \qquad\text{(by \eqref{intcont})}\\
        &\le 3  \int_{\thick_{g,\epsilon}} \sum_{j=1}^{\infty} \Lambda_j^{2n+2}\varphi_j^2 \dif A \qquad\text{(by \eqref{eqn:basis})}\\
        & = 3  \int_{\thick_{g,\epsilon}} \norm{(Pe^{-r_1\Delta_\epsilon}P)^{n+1}\delta_x}^2_{L^2(X_{g,\epsilon})} \dif A_x.
    \end{align*}
    The proof is complete. 
\end{proof}

\subsection{Estimate of the integral on the thick part}
In this subsection, we give an upper bound for 
\[
\int_{\thick_{g,\epsilon}}  \norm{(Pe^{-r_1\Delta_\epsilon}P)^{n+1}\delta_x}^2_{L^2(X_{g,\epsilon})} \mathrm dA_x.
\]
We decompose $X_{g,\epsilon}^{\mathrm {thick}}$ into the following two disjoint sets (see Figure \ref{fig:Y1Y2}): 
\begin{itemize}
	\item $Y_1:=\{x\in\thick_{g,\epsilon}|\ \dist_{\epsilon}(x,\thin_{g,\epsilon})\le r_1 + C'r_2 + 2\}$;
	\item $Y_2:=X_{g,\epsilon}^{\mathrm {thick}}\setminus Y_1$,
\end{itemize}
where $C' > 0$ is a universal constant that will be fixed later. 
\begin{figure}
    \centering
    \includegraphics[scale = 0.28]{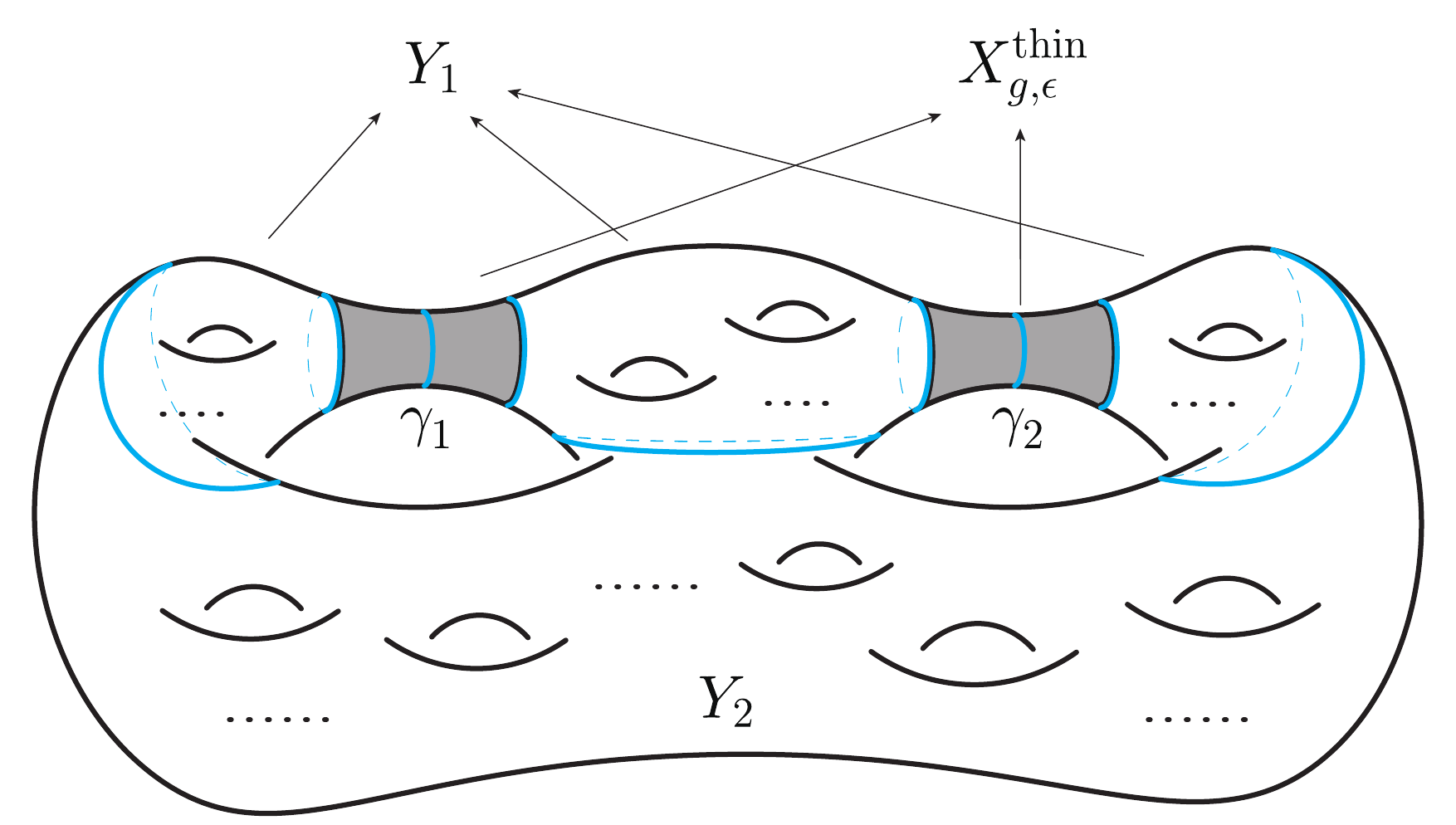}
    \caption{An illustration for $Y_1$ and $Y_2$}
    \label{fig:Y1Y2}
\end{figure}

\subsubsection{Estimate on $Y_1$}
We first give an upper bound for 
\[ \int_{Y_1}  \norm{(Pe^{-r_1\Delta_\epsilon}P)^{n+1}\delta_x}^2_{L^2(X_{g,\epsilon})} \mathrm dA_x. \]
We have the following two lemmas. 
\begin{lemma}\label{lem:normbound}
    For any $x\in Y_1$, 
    \[ \norm{(Pe^{-r_1\Delta_\epsilon}P)^{n+1}\delta_x}_{L^2(X_{g,\epsilon})} \ll 1. \] 
\end{lemma}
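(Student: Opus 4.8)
The plan is to reduce the iterated expression to a single application of $Pe^{-r_1\Delta_\epsilon}P$. Because $P$ is an orthogonal projection and $e^{-r_1\Delta_\epsilon}$ is self-adjoint with $\norm{e^{-r_1\Delta_\epsilon}}_{L^2\to L^2}=1$, the operator $Pe^{-r_1\Delta_\epsilon}P$ — and hence each of its powers — is a contraction on $L^2(X_{g,\epsilon})$. Since $P\delta_x=\delta_x-\sum_{k=1}^{l}\psi_k(x)\psi_k$, the function
\[ (Pe^{-r_1\Delta_\epsilon}P)\delta_x \;=\; P\Big(K_{r_1,\epsilon}(x,\cdot)-\sum_{k=1}^{l}\psi_k(x)\,e^{-r_1\Delta_\epsilon}\psi_k\Big) \]
lies in $L^2(X_{g,\epsilon})$. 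Writing $(Pe^{-r_1\Delta_\epsilon}P)^{n+1}\delta_x=(Pe^{-r_1\Delta_\epsilon}P)^n\big((Pe^{-r_1\Delta_\epsilon}P)\delta_x\big)$, applying the contraction bound $n$ times, then dropping the outer $P$ and using the triangle inequality gives
\[ \norm{(Pe^{-r_1\Delta_\epsilon}P)^{n+1}\delta_x}_{L^2}\ \le\ \norm{K_{r_1,\epsilon}(x,\cdot)}_{L^2}+\sum_{k=1}^{l}|\psi_k(x)|\,\norm{e^{-r_1\Delta_\epsilon}\psi_k}_{L^2}. \]
It then suffices to bound each term on the right by a universal constant.

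The sum is immediate: the interiors $\mathring V_k$ are pairwise disjoint, so at most one index $k\le l$ has $\psi_k(x)\neq0$; for that index $|\psi_k(x)|=\area(V_k)^{-1/2}\le 2/\sqrt\pi$ by \eqref{areaVi}, while $\norm{e^{-r_1\Delta_\epsilon}\psi_k}_{L^2}\le\norm{\psi_k}_{L^2}=1$, so the sum is $\le 2/\sqrt\pi$. For the heat-kernel term, the semigroup property gives $\norm{K_{r_1,\epsilon}(x,\cdot)}_{L^2}^2=K_{2r_1,\epsilon}(x,x)$, and the crucial observation is that one should not feed the large time $2r_1$ into Lemma~\ref{lem:heatest} (which would only give $\ll e^{8r_1}\to\infty$), but instead split off one unit of time:
\[ K_{2r_1,\epsilon}(x,x)=\int_{X_{g,\epsilon}}K_{2r_1-1,\epsilon}(x,z)\,K_{1,\epsilon}(z,x)\dif z\ \le\ \norm{K_{1,\epsilon}(x,\cdot)}_{L^\infty}\int_{X_{g,\epsilon}}K_{2r_1-1,\epsilon}(x,z)\dif z. \]
Here the last integral equals $1$ since $e^{-s\Delta_\epsilon}$ fixes constants, and $\norm{K_{1,\epsilon}(x,\cdot)}_{L^\infty}\le C_1e^{4}$ by Lemma~\ref{lem:heatest}, which applies because $x\in Y_1\subset\thick_{g,\epsilon}$ has $\inj(x)\ge1$ by \eqref{injld1} and the time is $1\ge1$. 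Thus $K_{2r_1,\epsilon}(x,x)\le C_1e^{4}$, so $\norm{K_{r_1,\epsilon}(x,\cdot)}_{L^2}\le\sqrt{C_1}\,e^{2}$, and the lemma follows with constant $\sqrt{C_1}\,e^{2}+2/\sqrt\pi$.

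The one place that needs care is exactly this last manoeuvre: the contraction property lets one discard all $n$ remaining iterations for free, which is precisely what makes it possible to invoke the heat-kernel bound only at time $1$ — where $\inj(x)\ge1$ turns Lemma~\ref{lem:heatest} into a genuine $O(1)$ estimate — rather than at a time that grows with $g$. It is also worth noting that the argument uses nothing about $Y_1$ beyond $\inj(x)\ge1$, so the bound in fact holds uniformly for every $x\in\thick_{g,\epsilon}$; the restriction to $Y_1$ is made only because on $Y_2$ one needs, and can prove, a far stronger superpolynomially small estimate — there the functions $\psi_k$ surrounding $Y_2$ do the real work — and it is that $Y_2$ bound, not the present one, that is the delicate part of the proof of Theorem~\ref{thm:mt-1}.
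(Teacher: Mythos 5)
Your proof is correct and follows essentially the same route as the paper's: use the fact that $P$ and the heat semigroup are $L^2$-contractions to discard all but one application of the operator, then control the surviving heat-kernel term by invoking Lemma~\ref{lem:heatest} at time $1$ (via $\norm{K_{r_1,\epsilon}(x,\cdot)}_{L^2}^2=K_{2r_1,\epsilon}(x,x)\le\norm{K_{1,\epsilon}(x,\cdot)}_{L^\infty}$, whereas the paper first factors $e^{-r_1\Delta_\epsilon}=e^{-(r_1-1)\Delta_\epsilon}e^{-\Delta_\epsilon}$ and then interpolates $\norm{K_{1,\epsilon}(x,\cdot)}_{L^2}\le\norm{K_{1,\epsilon}(x,\cdot)}_{L^\infty}^{1/2}\norm{K_{1,\epsilon}(x,\cdot)}_{L^1}^{1/2}$ --- the same estimate in a different order), and bound the projection term using \eqref{areaVi}. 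The only differences are cosmetic, and your closing observation that only $\inj(x)\ge1$ is used is accurate.
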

\begin{proof}
    Since the operator norms of $P$, $e^{-r_1\Delta_\epsilon}$ and $e^{-(r_1-1)\Delta_\epsilon}$ are all no more than $1$, we have
    \begin{equation}\label{norm01}
         \|(Pe^{-r_1\Delta_\epsilon}P)^{n+1}\delta_x\|_{L^2(X_{g,\epsilon})} \le\|e^{-\Delta_\epsilon}P\delta_x\|_{L^2(X_{g,\epsilon})}.   
    \end{equation}
    Let $k_x\in\{1,\cdots,l\}$ be the unique index such that $x\in \mathring{V}_{k_x}$, if such an index exists. In this case, we have 
    \begin{equation}\label{norm1}
        \begin{aligned}
            (e^{-\Delta_\epsilon}P\delta_x)(y)
            & = e^{-\Delta_\epsilon}\delta_x - \psi_{k_x}(x)\cdot (e^{-\Delta_\epsilon}\psi_{k_x})(y)\\
            & = K_{1,\epsilon}(x,y)-\psi_{k_x}(x)\int_{X_{g,\epsilon}}K_{1,\epsilon}(y,z)\psi_{k_x}(z)\mathrm dA_z. 
        \end{aligned}
    \end{equation}
    Then by \eqref{areaVi}, \eqref{norm1} and Lemma \ref{lem:heatest}, we have
    \begin{equation}\label{estxinY1}
    	\begin{aligned}
    		  &\|e^{-\Delta_\epsilon}P\delta_x\|_{L^2(X_{g,\epsilon})} \\
    	 \le{} &\|K_{1,\epsilon}(x,\cdot)\|_{L^2(X_{g,\epsilon})}+\|\psi_{k_x}\|_{L^\infty(X_{g,\epsilon})}\cdot\|e^{-\Delta_\epsilon}\psi_{k_x}\|_{L^2(X_{g,\epsilon})} \\
    	 \le{} &\|K_{1,\epsilon}(x,\cdot)\|_{L^\infty(X_{g,\epsilon})}^{\frac 12}\cdot\|K_{1,\epsilon}(x,\cdot)\|_{L^1(X_{g,\epsilon})}^{\frac 12}+\frac{2}{\sqrt{\pi}} \\
    	 \le{} &\sqrt C_1+\frac{2}{\sqrt{\pi}}.
    	\end{aligned}
    \end{equation}
    If no such $k_x$ exists, then $e^{-\Delta_\epsilon}P\delta_x=e^{-\Delta_\epsilon}\delta_x$, and hence
    \begin{equation}\label{notestxinY1}
        \|e^{-\Delta_\epsilon}P\delta_x\|_{L^2(X_{g,\epsilon})}=\|K_{1,\epsilon}(x,\cdot)\|_{L^2(X_{g,\epsilon})}\le \sqrt C_1. 
    \end{equation}
    The lemma then follows from \eqref{norm01}, \eqref{norm1} \eqref{estxinY1} and \eqref{notestxinY1}. 
\end{proof}

\begin{lemma}\label{contY1}
Under the above assumptions,
\[
    \begin{aligned}
        &\int_{Y_1} \|(Pe^{-r_1\Delta_\epsilon}P)^{n+1}\delta_x\|_{L^2(X_{g,\epsilon})}^2\mathrm dA_x \ll \frac{N_\epsilon\exp(2C'r_2\epsilon)}{\epsilon^2}. 
    \end{aligned}
\]
\end{lemma}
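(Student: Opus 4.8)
The plan is to strip away all of the analytic content by invoking Lemma \ref{lem:normbound}, which bounds the integrand: for every $x\in Y_1$ one has $\|(Pe^{-r_1\Delta_\epsilon}P)^{n+1}\delta_x\|_{L^2(X_{g,\epsilon})}\ll 1$ with a universal implied constant. Squaring and integrating over $Y_1$, the lemma reduces to the purely geometric estimate
\[ \area(Y_1)\ll \frac{N_\epsilon\exp(2C'r_2\epsilon)}{\epsilon^2}. \]

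To prove this, set $D:=r_1+C'r_2+2$, so that $Y_1\subset\{x\in\thick_{g,\epsilon}:\dist_\epsilon(x,\thin_{g,\epsilon})\le D\}$. Since $\thin_{g,\epsilon}=\bigcup_{\gamma\in\sN_\epsilon(X_g)}\collar(\gamma,w(\gamma)-1)_\epsilon$ and $Y_1$ lies in the thick part, a minimizing geodesic from a point of $Y_1$ to $\thin_{g,\epsilon}$ must cross $\partial\thin_{g,\epsilon}$, so $Y_1$ is contained in the $D$-neighborhood of $\partial\thin_{g,\epsilon}$. This boundary is a disjoint union of $2N_\epsilon$ simple closed curves (the two boundary circles of $\collar(\gamma,w(\gamma)-1)_\epsilon$ for each $\gamma\in\sN_\epsilon(X_g)$), and by \eqref{eqn:lw-1}, after rescaling lengths by $\epsilon^{-1}$, each such curve $\beta$ satisfies $\ell_\epsilon(\beta)<2/\epsilon$, hence has intrinsic diameter $<1/\epsilon$. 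Fixing a basepoint $p_\beta\in\beta$ for each component, the whole $D$-neighborhood of $\beta$ is therefore contained in the single ball $B_\epsilon(p_\beta,D+1/\epsilon)$, so
\[ \area(Y_1)\le\sum_{\beta}\area\big(B_\epsilon(p_\beta,D+1/\epsilon)\big)\le 2N_\epsilon\cdot\area\big(B_{\H_\epsilon}(D+1/\epsilon)\big). \]

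The remaining step is bookkeeping with $\epsilon$-powers. One has $\area(B_{\H_\epsilon}(R))=\tfrac{2\pi}{\epsilon^2}(\cosh(\epsilon R)-1)\ll\epsilon^{-2}\exp(\epsilon R)$, and with $R=D+1/\epsilon$ this is $\ll\epsilon^{-2}\exp(\epsilon D+1)\ll\epsilon^{-2}\exp(\epsilon D)$. Using the standing normalization $r_2/r_1\gg1$ (so $r_1\le r_2$ for $g$ large) and $\epsilon<1$, one gets $\epsilon D=\epsilon r_1+C'\epsilon r_2+2\epsilon\le(C'+1)\epsilon r_2+2$, whence $\exp(\epsilon D)\ll\exp((C'+1)r_2\epsilon)\le\exp(2C'r_2\epsilon)$ as soon as $C'\ge1$; this is one of the constraints that fixes the free constant $C'$. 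Combining the two displays yields $\area(Y_1)\ll N_\epsilon\epsilon^{-2}\exp(2C'r_2\epsilon)$, as desired.

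Granting Lemma \ref{lem:normbound}, nothing here is genuinely hard, but the one place I would be careful is the area bound for the $D$-neighborhood of a boundary curve: that curve has length $2/\epsilon$, which is \emph{large} as $\epsilon\to0$, so naively covering it by $O(1/\epsilon)$ balls of radius $D$ would introduce a spurious factor $\epsilon^{-1}$ and overshoot the target exponent. The remedy — and the step worth getting right — is to exploit that the curve nonetheless has \emph{diameter} only $O(1/\epsilon)$, so its entire $D$-tube fits inside a single ball of radius $D+O(1/\epsilon)$, and $\exp(\epsilon\cdot O(1/\epsilon))=O(1)$ is harmless. Keeping this, together with the choice of $C'$, consistent with the exact form of the stated bound is the main thing to watch.
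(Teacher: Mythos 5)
Your proof is correct and follows essentially the same route as the paper: reduce to an area bound via Lemma \ref{lem:normbound}, then cover $Y_1$ by one ball of radius $r_1+C'r_2+2+O(\epsilon^{-1})$ around a point on each of the $2N_\epsilon$ boundary circles of the thin part, using that each circle has length (hence diameter) $O(\epsilon^{-1})$ by \eqref{eqn:lw-1}, so that the extra $\epsilon^{-1}$ in the radius only costs a bounded factor $\exp(O(1))$. The paper performs exactly this covering with basepoints $x_{\gamma,\epsilon},y_{\gamma,\epsilon}$ and the radius $2C'r_2+3\epsilon^{-1}$, so no further comment is needed.
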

\begin{proof}
    For each $\gamma\in \sN_\epsilon$, let $x_{\gamma,\epsilon}$ and $y_{\gamma,\epsilon}$ be two points in distinct components of $\partial \collar(\gamma, w(\gamma)-1)_\epsilon$. Since by \eqref{eqn:lw-1} and rescaling, the length of each component of $\partial \collar(\gamma, w(\gamma)-1)_\epsilon$ is less than $2\epsilon^{-1}$, we have 
    \[
    Y_1\subset \bigcup_{\gamma\in \sN_\epsilon} \big(B_\epsilon(x_{\gamma,\epsilon},r_1+C'r_2+2+\epsilon^{-1})\cup B_\epsilon(y_{\gamma,\epsilon},r_1+C'r_2+2+\epsilon^{-1})\big).
    \]
    Denote $R_\epsilon=r_1+C'r_2+2+\epsilon^{-1}$, then, 
    \begin{equation}\label{areaY1}
        \begin{aligned}
            \area(Y_1)
            &\le \sum_{\gamma\in \sN_\epsilon}\big(\area(B_\epsilon(x_{\gamma,\epsilon},R_\epsilon))+\area(B_\epsilon(y_{\gamma,\epsilon},R_\epsilon))\big)\\
            &\le 2N_\epsilon\cdot \area(B_{\mathbb H_\epsilon}(2C'r_2+3\epsilon^{-1}))\\
            &=   2N_\epsilon\cdot\frac{2\pi}{\epsilon^2}(\cosh(2C'r_2\epsilon+3)-1) \\
            &\ll N_\epsilon\epsilon^{-2}\exp(2C'r_2\epsilon), 
        \end{aligned}
    \end{equation}
    where $B_{\mathbb H_{\epsilon}}(r)$ denotes the geodesic ball of radius $r$ in $\mathbb H_\epsilon$.
    The conclusion then follows from \eqref{areaY1} and Lemma \ref{lem:normbound}.
\end{proof}

\subsubsection{Estimate on $Y_2$}
We omit the subscript $L^2(X_{g,\epsilon})$ in the following for simplicity. Following the approach in \cite[Section 3--4]{LM2024} of Letrouit--Machado, we give an upper bound for 
\[
\int_{Y_2}  \|(Pe^{-r_1\Delta_\epsilon}P)^{n+1}\delta_x\|^2 \mathrm dA_x.
\]
For any $x\in Y_2$, let $\chi_x$ be the indicator function of the subset
\begin{equation*}
    \bigcup_{1\le i\le s,V_i\cap B_\epsilon(x,C'r_2)\neq\emptyset}V_i.
\end{equation*}
Note that 
\[
\supp(\chi_x)\subset B_\epsilon(x,C'r_2+2),
\]
by the definition of $Y_2$, this implies that 
\begin{equation*}
\supp(\chi_x)\subset X_{g,\epsilon}^{r_1},\quad \forall x\in Y_2.
\end{equation*}
Thus, every point $y\in \supp(\chi_x)$ satisfies $$\inj(y)\ge1.$$ Recall the following facts:
\begin{itemize}
    \item Since $X_{g,\epsilon}^{r_1} \subset \thick_{g,\epsilon}$, it follows from \eqref{Gcur} and \eqref{injld1} that for all $x \in X_{g,\epsilon}^{r_1}$, we have $\mathrm{cur}(x) > -1$ and $\inj(x) \ge 1$.
    \item $\{x_1,\cdots,x_l\}$ is an $r_1$-net of $X_{g,\epsilon}^{r_1}$, hence is also an $r_1$-net of $Y_2$; 
    \item $\|\psi_i\|_{L^\infty(X_g)} \le 2/\sqrt{\pi}$ for all $1\le i\le l$. 
\end{itemize}
These facts allow us to repeat the arguments in the proofs of \cite[Lemma 3.1, 3.4, 3.8]{LM2024}, taking $\rho = 1$ and $b = -1$ in the initial assumptions of \cite[Section 3]{LM2024}, to derive the following three special cases of those lemmas.

\begin{lemma}[\cite{LM2024}, Lemma 3.1]\label{lem3.1}
    There exists a universal constant $C_2>0$ such that if $C'\ge 48$, then for any $x\in Y_2$,
    \begin{align*}
        &\norm{(Pe^{-r_1\Delta_\epsilon}P)^{n+1}\delta_x}\\
        &\le C_2 \bigg( \sup_{\|\varphi\|=1} \norm{(P\chi_xe^{-r_1\Delta_\epsilon}\chi_xP)^{n}\varphi} + \exp(-\frac{C'^2r_2}{32}) \bigg).
    \end{align*}
\end{lemma}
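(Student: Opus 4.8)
\textbf{Proof proposal for Lemma \ref{lem3.1}.}

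The plan is to reproduce, in the rescaled geometry of $X_{g,\epsilon}$, the argument that Letrouit--Machado use to pass from the full heat-propagated Dirac mass to the ``localized'' operator $P\chi_x e^{-r_1\Delta_\epsilon}\chi_x P$, exploiting that the spatial cut-off $\chi_x$ only loses an exponentially small amount because heat does not travel much farther than $\sqrt{t}$ in time $t$. Concretely, I would first record that $\supp(\chi_x)\subset B_\epsilon(x,C'r_2+2)\subset X_{g,\epsilon}^{r_1}$, so that every point in the support has injectivity radius $\ge 1$ and curvature $>-1$; this is exactly the setting in which the heat-kernel bound \eqref{eqn:kernelbound} and the counting bound \eqref{eqn:count} apply, and hence the quantitative on-diagonal and Gaussian off-diagonal estimates behind \cite[Lemma 3.1]{LM2024} hold with the same universal constants (taking their $\rho=1$, $b=-1$). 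The point of rescaling was precisely to make these hypotheses uniform in $\epsilon$.

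Next I would set up the telescoping comparison between $(Pe^{-r_1\Delta_\epsilon}P)^{n+1}\delta_x$ and $(P\chi_x e^{-r_1\Delta_\epsilon}\chi_x P)^{n}$ applied to a unit vector supported near $x$. The idea is to write $e^{-r_1\Delta_\epsilon}=\chi_x e^{-r_1\Delta_\epsilon}\chi_x + (\text{error terms involving }1-\chi_x)$ and iterate $n+1$ times; because $\chi_x$ equals $1$ on the ball $B_\epsilon(x,C'r_2)$, any error term forces the heat flow to connect a point of that ball to a point at distance $\ge C'r_2$ from $x$. Each such passage costs, via the Gaussian bound $k_{t,\epsilon}(d)\ll\exp(-d^2/8t)$ with $t=r_1$ and the fact that the relevant distances are $\gtrsim C'r_2$, a factor $\exp(-c C'^2 r_2^2/r_1)$; summing the geometric series over the $n\le r_2/r_1$ factors and using $r_2/r_1\gg 1$, the total error is dominated by $\exp(-C'^2 r_2/32)$ once $C'\ge 48$ (the constant $32$ and the threshold $48$ come from chasing the constants in the Gaussian exponent together with the volume-growth factors $\exp(m)$ from \eqref{eqn:count}). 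The boundedness of $\|P\|$, $\|e^{-r_1\Delta_\epsilon}\|$ on $L^2$ by $1$, and $\|\psi_i\|_\infty\le 2/\sqrt\pi$, keep all intermediate norms controlled, yielding the stated inequality with a universal $C_2$.

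The main obstacle is the bookkeeping in the telescoping step: one must carefully track that, after applying $P$ and the cut-offs alternately, the ``tail'' that escapes $B_\epsilon(x,C'r_2)$ is genuinely penalized by the Gaussian factor and not reintroduced by the projection $P$ (whose kernel involves the bump functions $\psi_i$ supported in unit balls), and that the number of cross-terms generated after $n$ iterations is at most polynomial in $n$, hence harmlessly absorbed. This is precisely where one needs the three bulleted facts above — uniform injectivity radius, the $r_1$-net property of $\{x_1,\dots,x_l\}$ on $Y_2$, and the uniform $L^\infty$ bound on the $\psi_i$ — to match the hypotheses of \cite[Section 3]{LM2024} line by line. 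Since the only difference from \cite{LM2024} is the constant curvature $-\epsilon^2$ in place of $-1$, and we have arranged via \eqref{Gcur}, \eqref{injld1}, \eqref{eqn:kernelbound}, \eqref{eqn:count} that every estimate used there survives with $\epsilon$-independent constants, the argument goes through verbatim, which is why we state it as a special case rather than reprove it in full.
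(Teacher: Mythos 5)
Your proposal matches the paper's treatment: the paper likewise gives no independent proof of this lemma, but verifies exactly the three facts you list (uniform injectivity radius and curvature bounds on the rescaled thick part, the $r_1$-net property, and the uniform $L^\infty$ bound on the $\psi_i$) and then invokes \cite[Lemma 3.1]{LM2024} with $\rho=1$, $b=-1$ as a special case. Your additional sketch of the internal cut-off/telescoping mechanism is consistent with the cited argument, so the approach is essentially the same.
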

\noindent Following \cite{LM2024}, we denote by $\varphi_x\in L^2(X_{g,\epsilon})$ a function that attains the following supremum
\[
\sup_{\|\varphi\|=1} \norm{P\chi_xe^{-r_1\Delta_\epsilon}\chi_xP\varphi}.
\]

\begin{lemma}[\cite{LM2024}, Lemma 3.4]\label{lem3.4}
    Assume that $C'^2\ge 80+48C'$. Then there exists a universal constant $C_3>0$, and a subset $Y_3\subset Y_2$ with 
    \[
    \area(Y_3)\ll \exp(3C'r_2),
    \]
    such that for any $x\in Y_2\setminus Y_3$,
    \begin{equation*}
        \norm{e^{-r_1\Delta_\epsilon}|\varphi_x|}^2\le e^{-2r_1\lambda_{1,\epsilon}} + C_3\exp(-C'r_2).
    \end{equation*}
\end{lemma}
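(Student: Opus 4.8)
The plan is to show that $\varphi_x$ is supported in the metric ball $B_\epsilon(x,C'r_2+4)$, so that $|\varphi_x|$ has only a minuscule overlap with the constant function, and then to read off the bound from the spectral gap of $\Delta_\epsilon$ above its bottom eigenvalue $0$. Since the argument below applies to \emph{every} $x\in Y_2$, one may take $Y_3=\emptyset$; the set $Y_3$ and the hypothesis $C'^2\ge80+48C'$ are retained only to match the formulation of \cite[Lemma 3.4]{LM2024} that is invoked in the next lemma, and are not needed here.

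First I would localize $\varphi_x$. Write $T_x:=P\chi_xe^{-r_1\Delta_\epsilon}\chi_xP$, and note that $T_x=(e^{-r_1\Delta_\epsilon/2}\chi_xP)^{*}(e^{-r_1\Delta_\epsilon/2}\chi_xP)$ is a compact, self-adjoint, positive semidefinite operator on $L^2(X_{g,\epsilon})$. Because $\chi_xP\neq0$ (it does not annihilate a non-constant function supported in the cell containing $x$) and $e^{-r_1\Delta_\epsilon/2}$ is injective, $\sigma_x:=\|T_x\|_{\mathrm{op}}>0$; and since $T_x$ is self-adjoint with $T_x\ge0$, any unit maximizer $\varphi_x$ of $\|T_x\varphi\|$ is a unit eigenfunction of $T_x$ for the eigenvalue $\sigma_x$. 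Hence $\varphi_x=\sigma_x^{-1}T_x\varphi_x\in\mathrm{Range}(P\chi_x)$; in particular $P\varphi_x=\varphi_x$, and $\supp\varphi_x$ lies in the union of $\supp\chi_x$ with those cells $V_k$, $1\le k\le l$, that meet $\supp\chi_x$. Since $\supp\chi_x\subset B_\epsilon(x,C'r_2+2)$ (as noted above) and any such $V_k\subset B_\epsilon(x_k,1)$ with $\dist_\epsilon(x_k,x)<C'r_2+3$, it follows that $\supp\varphi_x\subset B_\epsilon(x,C'r_2+4)$.

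Next I would invoke the spectral gap. Let $\phi_0$ be the $L^2$-normalized constant eigenfunction of $\Delta_\epsilon$, decompose $|\varphi_x|=c_0\phi_0+w$ with $w\perp\phi_0$, and use that $\lambda_{1,\epsilon}$ is the bottom of the spectrum of $\Delta_\epsilon$ on $\phi_0^{\perp}$ together with $\||\varphi_x|\|=\|\varphi_x\|=1$ to get
\[
\|e^{-r_1\Delta_\epsilon}|\varphi_x|\|^2=c_0^2+\|e^{-r_1\Delta_\epsilon}w\|^2\le c_0^2+e^{-2r_1\lambda_{1,\epsilon}}\|w\|^2\le e^{-2r_1\lambda_{1,\epsilon}}+c_0^2 .
\]
It remains to bound $c_0^2$. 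By Cauchy--Schwarz, $\int_{X_{g,\epsilon}}|\varphi_x|\le\area(\supp\varphi_x)^{1/2}$, so using the localization of $\supp\varphi_x$, the fact that the covering $\H_\epsilon\to X_{g,\epsilon}$ maps $B_{\H_\epsilon}(\tilde x,r)$ onto $B_\epsilon(x,r)$, and $\area(X_{g,\epsilon})=4\pi\epsilon^{-2}(g-1)$,
\[
c_0^2=\frac{\big(\int_{X_{g,\epsilon}}|\varphi_x|\big)^2}{\area(X_{g,\epsilon})}\le\frac{\area\big(B_{\H_\epsilon}(C'r_2+4)\big)}{4\pi\epsilon^{-2}(g-1)}=\frac{\cosh(\epsilon(C'r_2+4))-1}{2(g-1)}\ll\frac{e^{\epsilon C'r_2}}{g},
\]
the $\epsilon^{-2}$ factors cancelling. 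Finally, recalling $r_2=c\log\frac{Kg}{N_\epsilon+1}$ and choosing the universal constant $c$ (to be fixed later) small enough that $(1+\epsilon)C'c\le1$ — possible since $\epsilon<1$ and $C'$ is a fixed universal constant — one has $e^{\epsilon C'r_2}=e^{(1+\epsilon)C'r_2}e^{-C'r_2}\le\frac{Kg}{N_\epsilon+1}e^{-C'r_2}\le Kg\,e^{-C'r_2}$, so that $c_0^2\ll e^{-C'r_2}$, i.e. $c_0^2\le C_3e^{-C'r_2}$ for a universal $C_3$. Combined with the previous display this gives the claim.

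The main obstacle is the localization step: one must be sure that the \emph{maximizer} $\varphi_x$ is genuinely an eigenfunction of the localized operator $T_x$ — this is exactly what confines $\supp\varphi_x$ to $B_\epsilon(x,C'r_2+4)$ — and for that the self-adjointness and positivity of $T_x$ (equivalently, positivity of the heat semigroup) are essential; it is also here that one must verify $\sigma_x>0$, absent which the choice of $\varphi_x$ would be vacuous and the inequality could fail. The remaining two steps are routine spectral theory together with a volume comparison in curvature $-\epsilon^2$; the only real care is tracking the $\epsilon$-dependence so that the $\epsilon^{-2}$ factors cancel, leaving the exponent $(1+\epsilon)C'c$, which is what forces the smallness requirement on $c$ to be recorded once $c$ is fixed.
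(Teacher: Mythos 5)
Your argument is correct, and it is a genuinely different (and in this setting stronger) route than the one the paper points to. The paper does not reprove this lemma; it instructs the reader to repeat the proof of \cite[Lemma 3.4]{LM2024}, which runs as follows: after the same localization $\supp\varphi_x\subset B_\epsilon(x,C'r_2+O(1))$ and the same splitting off of the constant mode, one bounds $c_0^2\le \area(B_\epsilon(x,2C'r_2))/\area(X_{g,\epsilon})$ and then \emph{defines} $Y_3$ as the set of $x$ for which $\area(B_\epsilon(x,2C'r_2))\ge \area(X_{g,\epsilon})e^{-C'r_2}$; the bound $\area(Y_3)\ll e^{3C'r_2}$ comes from Markov's inequality applied to $\int_{X}\area(B_\epsilon(x,R))\,\mathrm{d}A_x=\int_X\area(B_\epsilon(y,R))\,\mathrm{d}A_y\le \area(X)\cdot O(e^{R})$ with $R\approx 2C'r_2$ (this is where the exponent $3C'$ originates, and it is what makes the argument work for an arbitrary bounded-geometry surface, where the total area need not dominate $e^{3C'r_2}$). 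You instead bound $\area(B_\epsilon(x,C'r_2+4))$ uniformly by the ball volume in $\H_\epsilon$ and use the two facts specific to this paper — $\area(X_{g,\epsilon})=4\pi\epsilon^{-2}(g-1)$ and $r_2=c\log\frac{Kg}{N_\epsilon+1}$ with $c<\frac{1}{8+6C'}$, so that $(1+\epsilon)C'c<1$ is automatic rather than a new constraint — to get $c_0^2\ll e^{-C'r_2}$ for \emph{every} $x\in Y_2$, hence $Y_3=\emptyset$. Your two supporting steps are sound: $T_x=(e^{-r_1\Delta_\epsilon/2}\chi_xP)^*(e^{-r_1\Delta_\epsilon/2}\chi_xP)$ is indeed positive semidefinite and compact with $\sigma_x>0$, so any unit maximizer is a $\sigma_x$-eigenfunction lying in $\mathrm{Range}(P\chi_x)$ (a fact the paper also uses implicitly in Lemma \ref{Y2-Y3} when writing $\|T_x^n\varphi_x\|=\|T_x\varphi_x\|^n$), and the support of $P\chi_x\psi$ is contained in $\supp\chi_x$ together with the cells $V_k$, $k\le l$, meeting it, giving the radius $C'r_2+4$. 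What your approach buys is a cleaner statement (no exceptional set) at the cost of generality: it would not survive in the original setting of \cite{LM2024}, where the relation between $r_2$ and the total area is not at your disposal.
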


\begin{lemma}[\cite{LM2024}, Lemma 3.8]\label{lem3.8}
    There exist two universal constants $C_4,C_5>0$ such that for any $C>16$ and any $x\in Y_2$,
    \begin{align*}
        &\|P\chi_xe^{-r_1\Delta_\epsilon}\chi_xP\varphi_x\|^2\\
        &\le \big(1 - C_4\exp(-hr_1)\big)\cdot \norm{e^{-r_1\Delta_\epsilon}|\varphi_x|}^2+ C_5\exp(r_1-\frac{C^2r_1}{64}),
    \end{align*}
    where $h=50(1+C)$. 
\end{lemma}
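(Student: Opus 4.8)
The plan is to adapt, essentially line by line, the proof of \cite[Lemma 3.8]{LM2024}. Since every function below is manipulated only on $\supp(\chi_x)\subset X_{g,\epsilon}^{r_1}\subset\thick_{g,\epsilon}$ --- where $\mathrm{cur}>-1$ by \eqref{Gcur}, $\inj\ge1$ by \eqref{injld1}, the points $\{x_1,\dots,x_l\}$ restrict to an $r_1$-net, and the cells $V_k$ have $\|\psi_k\|_{L^\infty}\le2/\sqrt\pi$ and area comparable to $1$ (with $\area(V_k)\ge\pi/4$ by \eqref{areaVi}) --- the normalization $\rho=1$, $b=-1$ of \cite[Section 3]{LM2024} is exactly in force, so their argument applies, and the only substantive point is to check that every constant produced is independent of the rescaling parameter $\epsilon$. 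I would open with two structural observations: first, $T\df P\chi_x e^{-r_1\Delta_\epsilon}\chi_x P$ is a nonnegative self-adjoint operator, because $\langle T\varphi,\varphi\rangle=\|e^{-r_1\Delta_\epsilon/2}\chi_x P\varphi\|^2\ge 0$, so $\varphi_x$ is a top eigenvector of $T$ and in particular $P\varphi_x=\varphi_x$; second, the heat semigroup is positivity preserving, so $|e^{-r_1\Delta_\epsilon}v|\le e^{-r_1\Delta_\epsilon}|v|$ pointwise for every $v$.

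Writing $u\df\chi_x\varphi_x$, which is supported in $\supp(\chi_x)$, has $\|u\|\le1$, and satisfies $|u|\le|\varphi_x|$, positivity of the heat semigroup gives at once the crude inequality
\[
\|T\varphi_x\|^2=\|P(\chi_x e^{-r_1\Delta_\epsilon}u)\|^2\le\|\chi_x e^{-r_1\Delta_\epsilon}u\|^2\le\|\chi_x e^{-r_1\Delta_\epsilon}|\varphi_x|\|^2\le\|e^{-r_1\Delta_\epsilon}|\varphi_x|\|^2 .
\]
The whole content of the lemma is to replace the leading constant $1$ by $1-C_4\exp(-hr_1)$ at the cost of the additive error $C_5\exp(r_1-C^2r_1/64)$. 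That error is a heat-kernel tail: it controls the $L^2$-mass of $e^{-r_1\Delta_\epsilon}u$ that has diffused a distance of order $Cr_1$ out of $\supp(\chi_x)$ in time $r_1$, via the Gaussian off-diagonal bound \eqref{eqn:kernelbound}, with the crude factor $e^{r_1}$ absorbing the on-diagonal size of the kernel (compare Lemma \ref{lem:heatest}) and the volume growth; this is routine once the uniform-in-$\epsilon$ heat-kernel estimates of Section \ref{section4} are in hand.

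The genuine multiplicative gain comes from the combined effect of the two operations that the crude bound above throws away: after running the flow for time $r_1\ge1$, a definite fraction of the $L^2$-mass of $e^{-r_1\Delta_\epsilon}u$ is either carried outside $\supp(\chi_x)$ (and annihilated by $\chi_x$) or absorbed into the cell-averages $\langle\,\cdot\,,\psi_k\rangle$ (and removed by $P=\mathrm{Id}-\sum_{k=1}^l\langle\,\cdot\,,\psi_k\rangle\psi_k$). Making this quantitative, and pinning the fraction down to $\exp(-hr_1)$ with $h=50(1+C)$, is the step I expect to be the main obstacle. It rests on a parabolic Harnack estimate --- a nonnegative solution of the heat equation evolved for time $\ge r_1$ cannot oscillate on a unit ball by more than a factor $\exp(hr_1)$ once spatial scales of order $Cr_1$ are involved --- whose constant is uniform in $\epsilon$ precisely because of the curvature and injectivity bounds on $X_{g,\epsilon}^{r_1}$, and whose dependence on the scale $Cr_1$ is what forces $h$ to grow linearly in $C$. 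Combining it with the fact that the cells covering $\supp(\chi_x)$ have bounded overlap and comparable area, passing from $u$ to $|\varphi_x|$ by positivity wherever the sign of $e^{-r_1\Delta_\epsilon}u$ is needed, and reabsorbing the mass that escaped $\supp(\chi_x)$ into the same heat-kernel tail as before, one obtains the stated bound. I would finish by bookkeeping the constants: $C_4$ and $h$ from the Harnack step, $C_5$ from the two tail contributions, the admissibility conditions $C'\ge48$ and $C'^2\ge80+48C'$ already granted by Lemmas \ref{lem3.1} and \ref{lem3.4}, and the hypothesis $C>16$ of the present lemma, all exactly as in \cite{LM2024}.
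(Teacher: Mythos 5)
Your proposal matches the paper's treatment: the paper does not reprove this lemma but simply verifies that on $\supp(\chi_x)\subset X_{g,\epsilon}^{r_1}$ the normalization $\rho=1$, $b=-1$ of \cite[Section 3]{LM2024} holds (curvature $>-1$, injectivity radius $\ge1$, the $r_1$-net property, and the uniform bounds on the cells $V_i$ and on $\|\psi_i\|_{L^\infty}$), and then invokes \cite[Lemma 3.8]{LM2024} verbatim, which is exactly your strategy. Your additional sketch of the internal mechanics of the Letrouit--Machado argument is consistent with their proof but is not required by the paper, which treats the lemma as a black-box special case.
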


We now estimate the integral on $Y_2$. Assume that 
    \begin{align}
       &C'>48,\qquad C>16,\\
       &c<\min\left(\frac 1{8+6C'}, \frac {1}{4h}\right),\label{c}\\
       &C'^2>\max\left(80+48C',\frac{16}{c}\right),\label{C'2}\\
       &\frac{C^2}{64}>h+4,\label{C2}
    \end{align}
for example, $C'=10^5, C=10^4, c=10^{-7}$. 
First, by Lemma \ref{lem3.1} and Lemma \ref{lem3.4}, 
\begin{equation}\label{contY2}
    \begin{aligned}
               &\int_{Y_2}  \|(Pe^{-r_1\Delta_\epsilon}P)^{n+1}\delta_x\|^2 \mathrm dA_x\\
         \ll{} &\int_{Y_2}\norm{(P\chi_xe^{-r_1\Delta_\epsilon}\chi_xP)^{n}\varphi_x}^2 + \exp(-\frac{C'^2r_2}{16}) \mathrm dA_x\\
         \ll{} &\left(\int_{Y_2\setminus Y_3}+\int_{Y_3}\right)\norm{(P\chi_xe^{-r_1\Delta_\epsilon}\chi_xP)^{n}\varphi_x}^2\mathrm dA_x+\epsilon^{-2}g\exp(-\frac{C'^2 r_2}{16})\\
         \ll{} &\int_{Y_2\setminus Y_3}\|(P\chi_xe^{-r_1\Delta_\epsilon}\chi_xP)^{n}\varphi_x\|^2\mathrm dA_x+\exp(3C'r_2)+ \epsilon^{-2}g\exp(-\frac{C'^2 r_2}{16}),
    \end{aligned}
\end{equation}
where we use the bounds 
\[ \area(Y_2)\ll \epsilon^{-2}g \text{\quad and\quad} \|(P\chi_xe^{-r_1\Delta_\epsilon}\chi_xP)^{n}\varphi_x\|\leq 1,\ \Forall x\in Y_2. \]

Next, we use Lemma \ref{lem3.8} to give the following bound. 
\begin{lemma}\label{Y2-Y3}
    Under the above assumptions, if $1\ll r_1\ll r_2$, then 
    \[
        \int_{Y_2\setminus Y_3}\|(P\chi_xe^{-r_1\Delta_\epsilon}\chi_xP)^{n}\varphi_x\|^2\mathrm dA_x \ll \epsilon^{-2}g\cdot \exp(-\frac 14C_4\exp(-hr_1)n)e^{-2r_1n\lambda_{1,\epsilon}}. 
    \]
\end{lemma}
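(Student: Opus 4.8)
The plan is to iterate Lemma \ref{lem3.8} together with Lemma \ref{lem3.4}. Fix $x\in Y_2\setminus Y_3$ and write $T_x=P\chi_xe^{-r_1\Delta_\epsilon}\chi_xP$. The key identity is that $\|T_x^n\varphi_x\|$ can be controlled by repeatedly pulling off one factor of $T_x$: applying Lemma \ref{lem3.8} with $\varphi_x$ replaced at each stage by the (normalized) vector that realizes the operator norm, we get a recursion of the shape
\[
\|T_x^{k+1}\varphi_x\|^2 \le \big(1-C_4\exp(-hr_1)\big)\,\big\|e^{-r_1\Delta_\epsilon}|\cdot|\big\|^2\text{-type bound} + C_5\exp\!\big(r_1-\tfrac{C^2r_1}{64}\big),
\]
where the first term is in turn bounded, via Lemma \ref{lem3.4}, by $\big(1-C_4\exp(-hr_1)\big)\big(e^{-2r_1\lambda_{1,\epsilon}}+C_3\exp(-C'r_2)\big)$ times $\|T_x^k\varphi_x\|^2$. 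Here I am using that for $x\notin Y_3$ the hypothesis of Lemma \ref{lem3.4} persists at every step because $\chi_x$ is supported in $X_{g,\epsilon}^{r_1}$, so the same geometry (injectivity radius $\ge1$, curvature $>-1$) is available throughout the iteration. Thus, setting $A=1-C_4\exp(-hr_1)$ and absorbing the multiplicative error $e^{-2r_1\lambda_{1,\epsilon}}+C_3\exp(-C'r_2) \le e^{-2r_1\lambda_{1,\epsilon}}(1+O(\exp(-C'r_2+2r_1)))$, one obtains
\[
\|T_x^n\varphi_x\|^2 \ll A^n e^{-2r_1 n\lambda_{1,\epsilon}} + (\text{geometric sum of the additive errors}).
\]

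Next I would check that the additive errors are negligible compared to the main term $A^n e^{-2r_1 n\lambda_{1,\epsilon}}$. There are two of them. The first, $C_5\exp(r_1-\tfrac{C^2r_1}{64})$, is controlled using \eqref{C2}: since $\tfrac{C^2}{64}>h+4$ we have $r_1-\tfrac{C^2r_1}{64} < -(h+3)r_1$, which beats $A^n$ because $A^n \ge \exp(-2C_4\exp(-hr_1)n)$ and $n=\lfloor r_2/r_1\rfloor$ while $c<1/(4h)$ (from \eqref{c}) forces $\exp(-hr_1)n \ll 1$ after recalling $r_1 = c\log\log(Kg/(N_\epsilon+1))$, $r_2 = c\log(Kg/(N_\epsilon+1))$, so $\exp(-hr_1) = (\log\frac{Kg}{N_\epsilon+1})^{-hc}$ and $n \asymp \frac{\log(Kg/(N_\epsilon+1))}{\log\log(Kg/(N_\epsilon+1))}$, giving $\exp(-hr_1)n = o(1)$ precisely when $hc<1$. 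Hence $A^n \gg 1$, in fact $A^n \ge e^{-1}$ for large genus. The second additive error comes from the $C_3\exp(-C'r_2)$ term fed through the recursion $n$ times, contributing at most $n\cdot C_3\exp(-C'r_2)$, and since $c<\tfrac1{8+6C'}$ (so in particular $C'r_2 = C'c\log\frac{Kg}{N_\epsilon+1}$ is large) this is exponentially small in $\log\frac{Kg}{N_\epsilon+1}$ and in particular $\ll 1 \le A^n e^{2r_1n\lambda_{1,\epsilon}}\cdot(\text{something})$; more carefully, since $\lambda_{1,\epsilon}<1$ and $r_1 n \le r_2$, the factor $e^{-2r_1n\lambda_{1,\epsilon}}$ is bounded below by $e^{-2r_2}$, which is polynomially small but still dominates $\exp(-C'r_2)$ because $C'>48>1$. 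So pointwise,
\[
\|T_x^n\varphi_x\|^2 \ll A^n e^{-2r_1 n\lambda_{1,\epsilon}} = \exp\!\big(-\tfrac14 C_4\exp(-hr_1)\,n\big)\,e^{-2r_1 n\lambda_{1,\epsilon}},
\]
where I have rewritten $A^n = (1-C_4\exp(-hr_1))^n$ and used $1-u \le e^{-u} \le \exp(-\tfrac14 u)^{\,}$ — more precisely $(1-u)^n \le \exp(-nu) \le \exp(-\tfrac14 n u \cdot 4)$, and the factor $\tfrac14$ in the statement is just a convenient absolute constant to swallow the lower-order corrections from the additive errors (they can be absorbed into $C_4$ at the cost of replacing $C_4$ by $C_4/4$, say, once the genus is large).

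Finally I would integrate over $Y_2\setminus Y_3$. The pointwise bound above is uniform in $x\in Y_2\setminus Y_3$, so
\[
\int_{Y_2\setminus Y_3}\|T_x^n\varphi_x\|^2\,\mathrm dA_x \le \area(Y_2\setminus Y_3)\cdot \max_{x\in Y_2\setminus Y_3}\|T_x^n\varphi_x\|^2 \ll \epsilon^{-2}g\cdot \exp\!\big(-\tfrac14 C_4\exp(-hr_1)\,n\big)\,e^{-2r_1 n\lambda_{1,\epsilon}},
\]
using $\area(Y_2)\le \area(X_{g,\epsilon}) = 4\pi\epsilon^{-2}(g-1) \ll \epsilon^{-2}g$, which is exactly the claimed estimate. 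The main obstacle is making the iteration of Lemma \ref{lem3.8} rigorous: one must verify that at each of the $n$ steps the vector achieving the supremum still lies in a regime where Lemma \ref{lem3.4}'s estimate $\|e^{-r_1\Delta_\epsilon}|\varphi|\|^2 \le e^{-2r_1\lambda_{1,\epsilon}}+C_3\exp(-C'r_2)$ applies — this is why one needs $x\notin Y_3$ and the support condition $\supp(\chi_x)\subset X_{g,\epsilon}^{r_1}$ — and that the $n$ accumulated additive errors do not swamp the shrinking factor $A^n$, which is where the quantitative hypotheses \eqref{c}, \eqref{C'2}, \eqref{C2} on the constants $c, C, C'$ are used. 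Everything else (the area bound, the substitution of $r_1,r_2$) is routine.
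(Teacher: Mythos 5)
Your one-step estimate and your endgame are the same as the paper's: combine Lemma \ref{lem3.4} and Lemma \ref{lem3.8} with the hypotheses \eqref{c}, \eqref{C'2}, \eqref{C2} to get
\[
\|P\chi_xe^{-r_1\Delta_\epsilon}\chi_xP\varphi_x\|^2\le\Big(1-\tfrac14C_4\exp(-hr_1)\Big)e^{-2r_1\lambda_{1,\epsilon}},
\]
then integrate using $\area(Y_2)\ll\epsilon^{-2}g$. The gap is in how you pass from one step to $n$ steps. You propose to iterate the two lemmas, applying them at each stage to the normalized vector $T_x^k\varphi_x/\|T_x^k\varphi_x\|$ where $T_x=P\chi_xe^{-r_1\Delta_\epsilon}\chi_xP$; but Lemmas \ref{lem3.4} and \ref{lem3.8} are stated only for the specific maximizer $\varphi_x$ of $\varphi\mapsto\|P\chi_xe^{-r_1\Delta_\epsilon}\chi_xP\varphi\|$, and you yourself flag the need to ``verify that at each of the $n$ steps the vector achieving the supremum still lies in a regime where Lemma \ref{lem3.4}'s estimate applies'' without actually doing so. Your recursion also accumulates $n$ additive error terms, forcing the extra (and not fully carried out) comparison of that accumulated error against $A^ne^{-2r_1n\lambda_{1,\epsilon}}$.

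The paper avoids all of this with one observation your proposal is missing: $T_x=(e^{-r_1\Delta_\epsilon/2}\chi_xP)^*(e^{-r_1\Delta_\epsilon/2}\chi_xP)$ is a non-negative self-adjoint compact operator and $\varphi_x$ attains its operator norm, so $\varphi_x$ is a top eigenvector and $\|T_x^n\varphi_x\|^2=\|T_x\varphi_x\|^{2n}$ (the inequality $\|T_x^n\varphi_x\|\le\|T_x\|^n=\|T_x\varphi_x\|^n$ would also suffice). Hence the one-step bound is applied exactly once and raised to the power $n$, with no iteration of the lemmas and no accumulated additive errors; the factor $\tfrac14$ is already fixed at the single step by splitting $C_4\exp(-hr_1)$ into the $\tfrac12$ absorbed by the $C_3\exp(-C'r_2)$ correction and the $\tfrac14$ absorbed by the $C_5\exp(r_1-C^2r_1/64)$ term. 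If you replace your recursion by this identity, the rest of your write-up (the treatment of the constants and the area bound) goes through.
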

\begin{proof}
    First, if $r_2/r_1$ is sufficiently large , then 
    \[
        C_3\exp(-C'r_2)\leq \frac{1}{2}C_4\exp(-hr_1) e^{-2r_1\lambda_{1,\epsilon}}.
    \] 
    Thus by Lemma \ref{lem3.4}, for any $x \in Y_2 \setminus Y_3$, 
    \[
    \begin{aligned}
       &\big(1-C_4\exp(-hr_1)\big)\cdot \norm{e^{-r_1\Delta_\epsilon}|\varphi_x|}^2\\
  \le{}&\big(1-C_4\exp(-hr_1)\big)\cdot \big(e^{-2r_1\lambda_{1,\epsilon}}+C_3\exp(-C'r_2)\big)\\
  \le{}&\big(1-\frac 12C_4\exp(-hr_1)\big)\cdot e^{-2r_1\lambda_{1,\epsilon}}.
    \end{aligned}
    \]
    Next, by \eqref{C2} and our assumption $\lambda_{1,\epsilon}=\epsilon^2\lambda_1 < 1$, if $r_1$ is sufficiently large, then 
    \[
    \begin{aligned}
        C_5\exp(r_1-\frac{C^2r_1}{64})
        \le C_5\exp\big(-hr_1-3r_1\big)
        \le\frac14 C_4\exp(-hr_1)e^{-2r_1\lambda_{1,\epsilon}}.
    \end{aligned}
    \]
    Combining the above two estimates, it follows from Lemma \ref{lem3.8} that 
    \[
        \|P\chi_xe^{-r_1\Delta_\epsilon}\chi_xP\varphi_x\|^2 \leq \Big(1-\frac 14C_4\exp(-hr_1)\Big)\cdot e^{-2r_1\lambda_{1,\epsilon}}.
    \]
    Finally, by \eqref{c}, we obtain 
    \[
    \begin{aligned}
           & \int_{Y_2\setminus Y_3}\|(P\chi_xe^{-r_1\Delta_\epsilon}\chi_xP)^{n}\varphi_x\|^2\mathrm dA_x
        \\
        ={}& \int_{Y_2\setminus Y_3}\|P\chi_xe^{-r_1\Delta_\epsilon}\chi_xP\varphi_x\|^{2{n}}\mathrm dA_x\\
         \ll{}& \epsilon^{-2}g\cdot \Big(1-\frac 14C_4\exp(-hr_1)\Big)^{n}e^{-2r_1n\lambda_{1,\epsilon}}\\
        \leq{}& \epsilon^{-2}g\cdot \exp(-\frac 14C_4\exp(-hr_1)n)e^{-2r_1n\lambda_{1,\epsilon}},
    \end{aligned}
    \]
    as desired. 
\end{proof}

Now we are ready to prove Theorem \ref{thm:mt-1}.

\begin{proof}[Proof of Theorem \ref{thm:mt-1}]
Combining Lemma \ref{1mul}, Lemma \ref{contY1}, \eqref{contY2} and Lemma \ref{Y2-Y3}, we have
\begin{equation}\label{1upbd}
    \begin{aligned}
        m'\cdot e^{-r_1\lambda_{1,\epsilon}(2n+2)}
  \ll{} &\epsilon^{-2}g\cdot \exp(-\frac 14C_4\exp(-hr_1)n)e^{-2r_1n\lambda_{1,\epsilon}}\\
        &+ \exp(3C'r_2) + \epsilon^{-2}g\exp(-\frac{C'^2r_2}{16}) + \epsilon^{-2}N_\epsilon\exp(2C'r_2\epsilon)\\
  \ll{} &\epsilon^{-2}g\cdot \exp(-\frac 14C_4\exp(-hr_1)n)e^{-2r_1n\lambda_{1,\epsilon}}\\ 
        &+ \epsilon^{-2}g\exp(-\frac{C'^2r_2}{16}) + \epsilon^{-2}(N_\epsilon+1)\exp(3C'r_2).
    \end{aligned}
\end{equation}
To obtain an upper bound for $m'$, it suffices to control the following three terms:
\begin{itemize}
    \item $e^{r_1\lambda_{1,\epsilon}(2n+2)}\cdot \epsilon^{-2}(N_\epsilon+1)\exp(3C'r_2)$,
    \item $e^{r_1\lambda_{1,\epsilon}(2n+2)}\cdot \epsilon^{-2}g \exp(-\frac{C'^2r_2}{16})$,
    \item $e^{r_1\lambda_{1,\epsilon}(2n+2)}\cdot \epsilon^{-2}g \exp(-\frac 14C_4\exp(-hr_1)n)e^{-2r_1n\lambda_{1,\epsilon}}$.
\end{itemize}
Recall that $\lambda_{1,\epsilon} < 1$ and $ n = \lfloor r_2/r_1 \rfloor$. It follows from \eqref{c} that 
\begin{equation}\label{1term}
\begin{aligned}
    &e^{r_1\lambda_{1,\epsilon}(2n+2)}\cdot \epsilon^{-2}(N_\epsilon+1)\exp(3C'r_2)
    \le \epsilon^{-2}(N_\epsilon+1)\exp((4+3C')r_2)\\
    ={}&\epsilon^{-2}(N_\epsilon+1) \Big(\frac {Kg}{N_\epsilon+1}\Big)^{(4+3C')c} \ll \epsilon^{-2}(g(N_\epsilon+1))^{\frac 12}.
\end{aligned}
\end{equation}
Similarly, by \eqref{c} and \eqref{C'2}, 
\begin{equation}\label{2term}
\begin{aligned}
     &e^{r_1\lambda_{1,\epsilon}(2n+2)}\cdot \epsilon^{-2}g\exp(-\frac{C'^2r_2}{16})
    \le \epsilon^{-2}g\exp(\Big(4-\frac{C'^2}{16}\Big)r_2)\\
    ={}&\epsilon^{-2}g\Big(\frac{Kg}{N_\epsilon+1}\Big)^{\big(4-\frac{C'^2}{16}\big)c} \le \epsilon^{-2}g\Big(\frac{Kg}{N_\epsilon+1}\Big)^{4c-1} \ll\epsilon^{-2} (g(N_\epsilon+1))^{\frac 12}.
\end{aligned}
\end{equation}
For the third term, if $1\ll r_1\ll r_2$, then by \eqref{c}, 
\begin{equation}\label{3term}
\begin{aligned}
         &e^{r_1\lambda_{1,\epsilon}(2n+2)} \epsilon^{-2}g\exp(-\frac 14C_4\exp(-hr_1)n) e^{-2r_1n\lambda_{1,\epsilon}}\\
   \leq{}&\epsilon^{-2}g\cdot \exp(2r_1 - \frac 14C_4\exp(-\frac{1}{4c}r_1) n)\\
   \ll{} &\epsilon^{-2}g\cdot \exp(-\frac 15C_4\Big(\log\frac{Kg}{N_\epsilon+1}\Big)^{\frac 23})\\
   \ll{} &\epsilon^{-2}\frac {g}{\log \frac {Kg}{N_\epsilon+1}},
\end{aligned}
\end{equation}
where in the second inequality we apply 
\[\Big(\log\frac{Kg}{N_\epsilon+1}\Big)^{\frac 23}\ll n\exp(-\frac{1}{4c}r_1) \text{\quad and\quad} 2r_1\ll \frac 1{20}C_4\Big(\log\frac{Kg}{N_\epsilon+1}\Big)^{\frac 23}.\]

\noindent Finally, combining \eqref{1upbd}, \eqref{1term}, \eqref{2term}, \eqref{3term}, Lemma \ref{cardr1} and Lemma \ref{lem:subspace}, we obtain 
\[
\begin{aligned}
    m &\ll l+4N_\epsilon+\epsilon^{-2}(g(N_\epsilon+1))^{\frac 12}+\epsilon^{-2}\frac {g}{\log \frac {Kg}{N_\epsilon+1}}\\
      &\ll \epsilon^{-2}\frac {g}{\log\log \frac{10g}{N_\epsilon+1}}+N_\epsilon+\epsilon^{-2}(g(N_\epsilon+1))^{\frac 12}+\epsilon^{-2}\frac {g}{\log \frac {10g}{N_\epsilon+1}}\\
      &\ll \epsilon^{-2}\frac {g}{\log\log \frac{10g}{N_\epsilon+1}}.
\end{aligned}
\]
This completes the proof of Theorem \ref{thm:mt-1}.
\end{proof}

\section{Proof of Theorem \ref{thm:mt-2}}\label{section5}
In this section, we prove Theorem \ref{thm:mt-2}.

We use a thick-thin decomposition different from that used in the proof of Theorem \ref{thm:mt-1}. For any $\delta\in(0,1/2)$, let 
\[ \epsilon(\delta) = \arcsinh\frac{1}{\sinh(1/\delta+2)}. \] 
Then for any $\epsilon\in(0,\epsilon(\delta))$, we define 
\begin{equation}\label{eqn:delta-thickthin}
    \begin{aligned}
        \thin_g &\df \bigcup_{\gamma\in \sN_{\epsilon}(X_g)}\collar(\gamma,w(\gamma)-\frac{1}{\delta}),\\ 
        \thick_g &\df \overline{X\setminus\thin_g} = \sA_1\sqcup\sA_2\sqcup\cdots\sqcup\sA_{I_\epsilon}, \quad I_\epsilon\geq1. 
    \end{aligned}
\end{equation}
Next, for each $\gamma\in \sN_{\epsilon}(X_g)$, we define \emph{shell} of $\collar(\gamma,w(\gamma)-1/\delta)$ as 
\begin{equation*}
    \sS(\gamma) := \collar(\gamma,w(\gamma)-\frac{1}{\delta})\setminus \collar(\gamma,w(\gamma)-\frac{1}{\delta}-2). 
\end{equation*}
Let $\sS_k$ denote the union of the connected components of the shells adjacent to $\sA_k$. 

Since $\sA_k$ is connected, for any $p,q\in\sA_k$, there exists a shortest geodesic segment joining them that does not intersect with any $\gamma\in \sN_{\epsilon}(X_g)$. 
We have the following lemma: 
\begin{lemma}\label{lem:inj}
    Suppose $p,q\in\sA_k$, and let $\alpha$ be a shortest geodesic segment joining $p$ and $q$ that does not intersect with any $\gamma\in \sN_{\epsilon}(X_g)$. Then for any $x\in \alpha$, 
    \begin{equation*}
        \inj(x)>\epsilon \text{\quad and\quad} B(x,\epsilon)\subset\sA_k\cup\sS_k. 
    \end{equation*}
\end{lemma}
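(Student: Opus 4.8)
The plan is to analyze a shortest geodesic segment $\alpha$ from $p$ to $q$ staying in $\sA_k$ in the sense that it avoids the core geodesics, and to rule out, for each point $x\in\alpha$, that $x$ is ``deep'' inside any thin collar. Concretely, since $\alpha$ does not meet any $\gamma\in\sN_\epsilon(X_g)$, for each $\gamma$ the segment $\alpha$ stays entirely in one of the two components of $\collar(\gamma,w(\gamma))\setminus\gamma$ whenever it enters that collar at all. I would first observe that if $x\in\alpha$ lies in some standard collar $\collar(\gamma,w(\gamma))$ with $\dist(x,\gamma)=w(\gamma)-d$ for $d$ small (say $d\le 1/\delta$), then the component of $\partial\collar(\gamma,w(\gamma)-d)$ through $x$ is a closed curve of length $\ell(\gamma)\cosh(w(\gamma)-d)$, which by the computation in \eqref{eqn:lw-1}-style estimates is very short; one can then homotope $\alpha$ across this thin annular region to produce a strictly shorter segment from $p$ to $q$ that still avoids all core geodesics (the endpoints $p,q\in\sA_k$ lie outside such deep regions by the definition \eqref{eqn:delta-thickthin} of $\thin_g$), contradicting minimality. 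Hence every $x\in\alpha$ satisfies $\dist(x,\gamma)\le w(\gamma)-1/\delta-2$ fails, i.e. $x$ is within the shell region or the thick part: more precisely $x\in\sA_k\cup\sS_k$, and in fact $\dist(x,\partial\collar(\gamma,w(\gamma)))\ge 1/\delta+2$ whenever $x$ lies in the collar of $\gamma$, wait—rather $\dist(x,\gamma)\ge$ (something controlled) so the injectivity radius bound below applies.

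For the injectivity radius claim, I would split into two cases. If $x$ is in the thick part $\sA_k$, then by the choice $\epsilon<\epsilon(\delta)$ and the Collar Lemma (Lemma \ref{lem:collar}, part (3)), $x$ is at distance $\ge 0$ from $\partial\collar(\gamma,w(\gamma))$ only if it is outside the thin collars, but more to the point, using \eqref{eqn:inj} with $d=\dist(x,\partial\collar(\gamma,w(\gamma)))$ one gets $\sinh\inj(x)\ge\cosh(\ell(\gamma)/2)\cosh d-\sinh d\ge 1$ for $d$ at least a small constant, and since $\epsilon<\arcsinh 1$ this gives $\inj(x)>\epsilon$. If $x$ lies in a shell $\sS(\gamma)$, then $\dist(x,\gamma)$ ranges in $[w(\gamma)-1/\delta-2,\ w(\gamma)-1/\delta]$, so $\dist(x,\partial\collar(\gamma,w(\gamma)))\ge 1/\delta$; plugging $d\ge 1/\delta$ into \eqref{eqn:inj} and using $\ell(\gamma)<2\epsilon<2\epsilon(\delta)$, a direct estimate with the definition $\epsilon(\delta)=\arcsinh\frac{1}{\sinh(1/\delta+2)}$ shows $\sinh\inj(x)\ge\sinh\epsilon$, hence $\inj(x)>\epsilon$ (the constant $1/\delta+2$ in the definition is exactly tuned so that a ball of radius $\epsilon$ around any shell point is embedded and stays in the collar, not crossing $\partial\collar(\gamma,w(\gamma))$). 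The containment $B(x,\epsilon)\subset\sA_k\cup\sS_k$ then follows because a ball of radius $\epsilon$ around a point of $\sA_k$ or its adjacent shells cannot reach any other component $\sA_{k'}$, $k'\neq k$, nor a non-adjacent thin part: any path of length $<\epsilon$ from $x$ into a different region would have to cross through a full shell of width $2$, which is impossible since $2>\epsilon$.

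The main obstacle I anticipate is the homotopy/shortening argument establishing that $\alpha$ cannot penetrate deep into any thin collar: one must be careful that the competitor path one builds still avoids \emph{all} core geodesics simultaneously and still joins $p$ to $q$, and that its length genuinely decreases — this requires quantifying how short the ``cross-cut'' through the thin annulus is versus how much of $\alpha$ it replaces, using the explicit hyperbolic metric $\dif\rho^2+\ell(\gamma)^2\cosh^2\!\rho\,\dif t^2$ on the collar. The second delicate point is checking the numerology: verifying that the specific choice $\epsilon(\delta)=\arcsinh\frac{1}{\sinh(1/\delta+2)}$ makes the inequality $\sinh\inj(x)\ge\sinh\epsilon$ hold for all shell points, which amounts to monotonicity of $d\mapsto\cosh(\ell(\gamma)/2)\cosh d-\sinh d$ and a comparison at the endpoint $d=1/\delta$, $\ell(\gamma)\to 2\epsilon(\delta)^-$. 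Both are elementary but need the explicit Fermi-coordinate formulas; I would carry out the shortening argument first since it underpins the rest.
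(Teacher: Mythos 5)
Your overall strategy is the paper's: cap how deep $\alpha$ can dive into a collar by comparing it with the very short boundary circle of the thin part, then feed that depth bound into the Collar Lemma formula \eqref{eqn:inj}. But the quantitative conclusion you extract from the shortening argument is too weak, and this breaks both halves of the lemma. Writing $d=\dist\big(x,\partial\collar(\gamma,w(\gamma))\big)$, you only conclude $x\in\sA_k\cup\sS_k$, i.e.\ $d\le 1/\delta+2$. The paper's comparison is sharper: after restricting to the excursion of $\alpha$ into $\collar(\gamma,w(\gamma)-1/\delta)$, the endpoints lie on a boundary component of that collar whose length is $<2$ by the computation \eqref{eqn:lw-1} (the boundary at $w(\gamma)-1/\delta$ is even shorter than at $w(\gamma)-1$), so if $\alpha$ penetrated more than distance $1$ past that boundary its excursion would have length $\ge 2$ and could be replaced by the boundary arc; hence $d\le 1/\delta+1$. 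That extra unit is exactly what you are missing. With only $d\le 1/\delta+2$, a point $x$ could sit at the inner edge of the shell, and then $B(x,\epsilon)$ leaks past the shell into the deep thin part — your argument that a path of length $<\epsilon$ ``would have to cross through a full shell of width $2$'' fails precisely because $x$ is already inside the shell. With $d\le 1/\delta+1$ and $\epsilon<1$ the containment is immediate.

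The injectivity radius estimate has the same problem plus a sign error in the monotonicity. In the relevant range, $\sinh\inj(x)=\cosh\frac{\ell(\gamma)}{2}\cosh d-\sinh d$ is essentially $e^{-d}$ plus a tiny correction and is \emph{decreasing} in $d$, so you need an \emph{upper} bound on $d$; ``plugging $d\ge 1/\delta$ into \eqref{eqn:inj}'' goes the wrong way. Moreover the numerology is tuned to $d\le 1/\delta+1$, not $1/\delta+2$: one has
\begin{equation*}
\arcsinh\big(e^{-(1/\delta+1)}\big)>\arcsinh\frac{1}{\sinh(1/\delta+2)}=\epsilon(\delta)>\epsilon,
\qquad\text{but}\qquad
\arcsinh\big(e^{-(1/\delta+2)}\big)<\epsilon(\delta),
\end{equation*}
since $1/\sinh(s)\approx 2e^{-s}$ sits strictly between $e^{-(s)}$ and $e^{-(s-1)}$ for $s=1/\delta+2\ge 4$. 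So at the depth your version allows, the crude estimate no longer clears $\epsilon(\delta)$, and rescuing it would require a delicate borderline computation that you flag but do not carry out. The fix is simply to prove the sharper depth bound $d\le 1/\delta+1$ first, as the paper does; everything else then follows by the estimates you already have in mind.
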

\begin{proof}
    For any $x\in \alpha$, if $x\notin \collar(\gamma,w(\gamma))$ for all $\gamma\in \sN_{\epsilon}(X_g)$, then the lemma follows by definition. If $x\in \collar(\gamma,w(\gamma))$ for some $\gamma\in \sN_{\epsilon}(X_g)$, by considering the restriction of $\alpha$ to $\thin_g$, one may assume that both $p$ and $q$ are in one component of the boundary $\partial \collar(\gamma,w(\gamma)-1/\delta)$. See Figure \ref{fig:collar} for an illustration. 
    \begin{figure}
        \centering 
        \includegraphics[scale = 0.28]{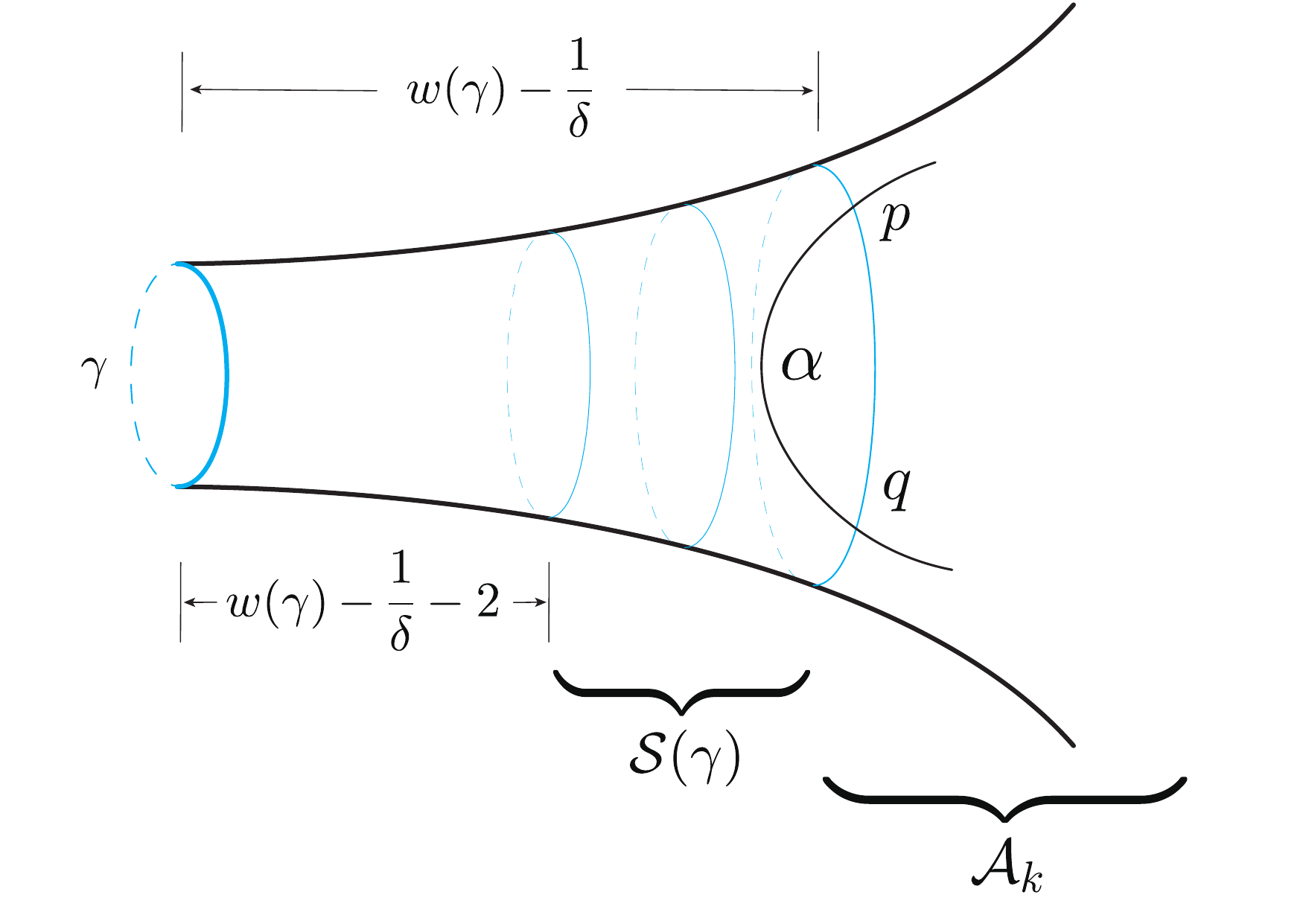}
        \caption{An illustration for the position of $\alpha$.}
        \label{fig:collar}
    \end{figure}
    We first claim that
    \begin{equation}\label{d-i-b}
     d:=\dist\big(x,\partial \collar(\gamma,w(\gamma))\big)\leq 1/\delta+1.   
    \end{equation} 
    Otherwise, $\ell(\alpha)\geq 2$, while by \eqref{eqn:lw-1}, $\ell(\alpha_{pq})<2$, which leads to a contradiction since we assume that $\alpha$ is the shortest. Thus, $d\leq 1/\delta+1$, and by \eqref{eqn:inj} we have 
    \begin{align*}
        \inj(x)
         & =  \arcsinh\Big(\cosh\frac{\ell(\gamma)}{2}\cosh d-\sinh d\Big)\\
         &\geq \arcsinh(\cosh d-\sinh d)\\
        &\geq \arcsinh\frac{1}{e^{1/\delta+1}}\\
        &  >  \arcsinh\frac{1}{\sinh(1/\delta+2)} = \epsilon(\delta) > \epsilon. 
    \end{align*}
    Moreover, since $\epsilon<1$, by \eqref{d-i-b} we have $B(x,\epsilon)\subset\sA_k\cup\sS_k$, this concludes the proof of the lemma. 
\end{proof}

Let $f$ be a normalized eigenfunction of $\Delta$ with eigenvalue $\lambda<1/4$, then we have the following lemma, which estimates the oscillations of $f$ over the components of the thick part of $X_g$. 
\begin{lemma}\label{lem:oscillation}
    Suppose $p\in\sA_k$, $q\in\sA_k\cup\sS_k$ and let $\alpha$ be a shortest geodesic segment joining $p$ and $q$ that does not intersect with any $\gamma\in \sN_{\epsilon}(X_g)$. Then 
    \begin{equation*}
        \abs{f(p)-f(q)}^2\leq c(\epsilon)\cdot \lambda\cdot \ell(\alpha) 
    \end{equation*}
    where $c(\epsilon)>0$ depends only on $\epsilon$. 
\end{lemma}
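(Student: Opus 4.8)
The plan is to prove the oscillation estimate by controlling the derivative of $f$ along the geodesic segment $\alpha$ using an elliptic/interior estimate, which is made available by the uniform lower bound on injectivity radius along $\alpha$ from Lemma \ref{lem:inj}. Concretely, writing $\alpha:[0,L]\to X_g$ with $L=\ell(\alpha)$ parameterized by arclength and $p=\alpha(0)$, $q=\alpha(L)$, we have by Cauchy--Schwarz
\[
\abs{f(p)-f(q)}^2 = \Big(\int_0^L \tfrac{\dif}{\dif\rho}\,f(\alpha(\rho))\dif\rho\Big)^2 \le L\int_0^L \abs{\nabla f(\alpha(\rho))}^2 \dif\rho \le L \int_0^L \|\nabla f\|^2_{L^\infty(B(\alpha(\rho),\epsilon/2))}\dif\rho .
\]
So it suffices to bound $\|\nabla f\|^2_{L^\infty(B(x,\epsilon/2))}$ for each $x\in\alpha$ by $c(\epsilon)\lambda$ times the $L^2$-mass of $f$ on a slightly larger ball, and then to sum these local $L^2$-masses using bounded overlap.

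First I would establish the local gradient estimate: for any $x\in X_g$ with $\inj(x)>\epsilon$ and any eigenfunction $f$ with $\Delta f=-\lambda f$, $0<\lambda<1/4$, one has
\[
\|\nabla f\|_{L^\infty(B(x,\epsilon/2))}^2 \le c_1(\epsilon)\,\lambda \int_{B(x,\epsilon)} f^2 \dif A .
\]
This is standard interior elliptic regularity on the ball $B(x,\epsilon)$, which lifts isometrically to a ball of the same radius in $\mathbb H^2$ since $\inj(x)>\epsilon$: on that fixed hyperbolic ball the eigenfunction equation $\Delta f=-\lambda f$ gives, via Schauder or $L^2$ Sobolev estimates, control of $\|f\|_{C^1(B(x,\epsilon/2))}$ by $\|f\|_{L^2(B(x,\epsilon))}$ with a constant depending only on $\epsilon$ (and continuously on $\lambda\in[0,1/4]$, hence bounded); the factor $\lambda$ can be tracked by rescaling $f$ or simply by noting the crude bound $\|f\|_{C^1}\ll_\epsilon \|f\|_{L^2}$ together with the mean-value identity for eigenfunctions, and in fact for the purposes of the final bound one only needs a constant depending on $\epsilon$ (the $\lambda$ is a bonus that follows from, e.g., integrating $|\nabla f|^2$ against a cutoff and using $\int|\nabla f|^2=\lambda\int f^2$ on the whole surface, combined with sub-mean-value properties). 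I will use the cleanest route: a Caccioppoli-type inequality $\int_{B(x,\epsilon/2)}|\nabla f|^2 \le c(\epsilon)\lambda \int_{B(x,\epsilon)} f^2$, then upgrade $\int$ to $\sup$ on the smaller ball by interior elliptic regularity for $|\nabla f|$ (which satisfies a differential inequality with coefficients bounded in terms of $\lambda\le 1/4$).

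Then I would assemble the pieces. By Lemma \ref{lem:inj}, every $x\in\alpha$ has $\inj(x)>\epsilon$ and $B(x,\epsilon)\subset\sA_k\cup\sS_k$; covering $\alpha$ by the balls $\{B(\alpha(\rho),\epsilon)\}_{\rho\in[0,L]}$ and extracting a subcover with bounded multiplicity $M(\epsilon)$ (depending only on $\epsilon$, by a volume-packing argument in $\mathbb H^2$ as in Lemma \ref{rnet}), we get
\[
\abs{f(p)-f(q)}^2 \le L\cdot c_1(\epsilon)\,\lambda \sum_i \int_{B(x_i,\epsilon)} f^2 \dif A \le L\cdot c_1(\epsilon) M(\epsilon)\,\lambda \int_{\sA_k\cup\sS_k} f^2 \dif A \le c(\epsilon)\,\lambda\, L,
\]
using $\|f\|_{L^2(X_g)}=1$ in the last step. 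This gives the claim with $c(\epsilon)=c_1(\epsilon)M(\epsilon)$.

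The main obstacle is making the local gradient-to-sup estimate genuinely uniform: one must be careful that the elliptic constant depends only on $\epsilon$ (through the fixed geometry of a hyperbolic ball of radius $\epsilon$) and not on the global surface, and that the dependence on $\lambda$ stays bounded over $(0,1/4)$ — both are routine but require stating the right scale-invariant form of interior regularity. A secondary technical point is the passage from the integral bound $\int_{B}|\nabla f|^2$ to the pointwise bound along $\alpha$: since $\alpha$ is one-dimensional and has measure zero, one cannot integrate $|\nabla f|^2$ over $\alpha$ directly against surface measure, which is exactly why the $L^\infty$ upgrade on balls is needed rather than just a Caccioppoli inequality.
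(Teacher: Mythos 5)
Your overall architecture (local $L^\infty$ gradient bound from Sobolev/elliptic regularity on balls of radius $\epsilon$, made uniform by the injectivity radius bound of Lemma \ref{lem:inj}, followed by a bounded-overlap summation along the shortest admissible path) is exactly the paper's strategy. However, the key local estimate you state is false as written, and your proposed justifications for it do not work. You claim
\[
\|\nabla f\|_{L^\infty(B(x,\epsilon/2))}^2 \le c_1(\epsilon)\,\lambda \int_{B(x,\epsilon)} f^2 \dif A ,
\]
with $c_1(\epsilon)$ depending only on $\epsilon$. The Caccioppoli computation you invoke actually gives
$\int_{B(x,\epsilon/2)}|\nabla f|^2 \le 2\lambda\int_{B(x,\epsilon)}f^2 + 4\|\nabla\eta\|_\infty^2\int_{B(x,\epsilon)}f^2$, and the cutoff term carries no factor of $\lambda$; so Caccioppoli only yields $c(\epsilon)\int_B f^2$, not $c(\epsilon)\lambda\int_B f^2$. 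Indeed no such inequality can hold uniformly: as $\lambda\to 0$ a local solution of $\Delta f=-\lambda f$ can look like a non-constant harmonic function (e.g.\ the first eigenfunction of a Cheeger dumbbell near its nodal set, where $|\nabla f|$ is bounded below while $\lambda\to 0$ and $\int_B f^2$ stays comparable to $\|f\|_{L^2}^2$), so the left side does not vanish with $\lambda$. Your parenthetical remark that ``one only needs a constant depending on $\epsilon$'' is also not an escape: the factor $\lambda$ in the conclusion is essential — it is exactly what makes Lemma \ref{lem:pf1} work, since there $R_j\sim 2^j/\sqrt{\lambda}$ and the $\lambda$'s must cancel against $\mu_{j-1}\sim 2^{j-1}\sqrt{\lambda}$.

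The correct substitute — and what the paper does — is to put the \emph{local Dirichlet energy} on the right-hand side: since $\dif f$ is itself an eigenform of the Hodge Laplacian with eigenvalue $\lambda<1/4$, Sobolev embedding on $B(x,\epsilon)$ gives $\|\nabla f\|_{L^\infty(B(x,\epsilon/2))}\le c(\epsilon)\|\dif f\|_{L^2(B(x,\epsilon))}$. The factor $\lambda$ then enters only at the very end, through the \emph{global} identity $\int_{X_g}|\dif f|^2=\lambda\|f\|_{L^2(X_g)}^2=\lambda$, after the bounded-overlap summation $\sum_i\int_{B(x_i,\epsilon)}|\dif f|^2\le 2\lambda$. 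With this replacement your assembly goes through and reproduces the paper's proof; note also that the bounded multiplicity of the balls $B(\alpha_n,\epsilon)$ is not pure volume packing in $\H^2$ but uses the fact that $\alpha$ is shortest among admissible paths together with $B(\alpha_n,\epsilon)\subset\sA_k\cup\sS_k$ (otherwise two distant parameter values could be joined by an admissible shortcut), which is the paper's ``$|n_1-n_2|<2$'' argument.
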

\begin{proof}
    The proof is essentially the same as the proof of \cite[Proposition 12]{WX2022b}. Let $\alpha:[0,\ell] \to X_g$, $\ell=\ell(\alpha)$ be the arc-length parameterization such that $\alpha(0)=p$ and $\alpha(\ell)=q$. 
    For any $x\in\alpha$, by an argument similar to Lemma \ref{lem:inj} we may assume $\inj(x) > \epsilon/2$. Thus, by standard Sobolev embedding theorem \cite{Taylor2023}, 
	\begin{align*}
	    \norm{\nabla f}_{L^\infty(B(x,\epsilon/4))} 
        \leq \tilde{c}(\epsilon)\sum_{j=0}^{N} \norm{\Delta^j(\dif f)}_{L^2(B(x,\epsilon/2))}\leq \frac43 \tilde{c}(\epsilon)\norm{\dif f}_{L^2(B(x,\epsilon/2))}, 
	\end{align*}
	where $\tilde{c}(\epsilon)>0$ only depends on $\epsilon$. 

    If $\ell< \epsilon/4$, then since $\ell<1$, 
    \begin{equation}\label{eqn:oscillation-1}
        \begin{aligned}
            \abs{f(p)-f(q)}^2
            &\leq \norm{\nabla f}_{L^\infty(B(p,\epsilon/4))}^2\cdot \ell^2\leq \frac{16}{9}\tilde{c}(\epsilon)^2\cdot \lambda\cdot \ell. 
        \end{aligned}
    \end{equation}
    If $\ell\geq \epsilon/4$, then for each integer $n \in \{0,\cdots, \lfloor 4\ell/\epsilon \rfloor\}$, set $\alpha_n = \alpha(n\epsilon/4)$. We have 
    \begin{equation}\label{eqn:oscillation-2}
    \begin{aligned}
        \abs{f(p)-f(q)}
        &\leq \sum_{n=0}^{\lfloor 4\ell/\epsilon\rfloor-1}\abs{f(\alpha_n) - f(\alpha_{n+1})} + \abs{f(\alpha_{\lfloor 4\ell/\epsilon \rfloor}) - f(q)}\\
        &\leq \sum_{n=0}^{\lfloor 4\ell/\epsilon\rfloor} \norm{\nabla f}_{L^\infty(B(\alpha_n,\epsilon/4))}\cdot \frac{\epsilon}{4}\\
        &\leq \frac{1}{3}\tilde{c}(\epsilon)\sum_{n=0}^{\lfloor 4\ell/\epsilon\rfloor} \norm{\dif f}_{L^2(B(\alpha_n,\epsilon/2))}\cdot \epsilon\\
        &\leq \frac{1}{3}\tilde{c}(\epsilon) \bigg( \sum_{n=0}^{\lfloor 4\ell/\epsilon\rfloor} \norm{\dif f}^2_{L^2(B(\alpha_n,\epsilon/2))} \bigg)^{1/2} \bigg( \sum_{n=0}^{\lfloor 4\ell/\epsilon\rfloor} \epsilon^2 \bigg)^{1/2}. 
    \end{aligned}
    \end{equation}
    We claim that if $B(\alpha_{n_1},\epsilon/2)\cap B(\alpha_{n_2},\epsilon/2)\neq \emptyset$, then $\abs{n_1-n_2} < 2$. Otherwise, we can find a shorter geodesic segment joining $p$ and $q$ that does not intersect with any $\gamma\in \sN_{\epsilon}(X_g)$, which leads to a contradiction. It then follows that 
    \begin{equation}\label{eqn:oscillation-3}
        \sum_{n=0}^{\lfloor 4\ell/\epsilon\rfloor} \norm{\dif f}^2_{L^2(B(\alpha_n,\epsilon))} \leq 2\norm{\dif f}_{L^2(X_g)}^2 = 2\lambda. 
    \end{equation}
    Moreover, since $\epsilon<1$, 
    \begin{equation}\label{eqn:oscillation-4}
        \sum_{n=0}^{\lfloor 4\ell/\epsilon\rfloor} \epsilon^2 \leq \sum_{n=0}^{\lfloor 4\ell/\epsilon\rfloor} \epsilon = (\lfloor 4\ell/\epsilon\rfloor+1)\epsilon \leq 8\ell, 
    \end{equation}
    we deduce from \eqref{eqn:oscillation-2}, \eqref{eqn:oscillation-3}, and \eqref{eqn:oscillation-4} that 
    \begin{equation}\label{eqn:oscillation-5}
        \abs{f(p)-f(q)}^2 \leq \frac{16}{9}\tilde{c}(\epsilon)^2\cdot \lambda\cdot \ell. 
    \end{equation}
    The lemma then follows from \eqref{eqn:oscillation-1} and \eqref{eqn:oscillation-5} by setting $c(\epsilon) = \frac{16}{9}\tilde{c}(\epsilon)^2$. 
\end{proof}

Similarly to Lemma \ref{lem:mass}, we have 
\begin{lemma}\label{l-s-ub-3}
    Let $X_g$ be a closed hyperbolic surface, and let $f$ be an eigenfunction of $\Delta$ with eigenvalue $\lambda\leq1/4-\delta^2$. Then 
    \[ \int_{X_g} f^2 \dif A \leq 3\int_{\thick_g} f^2 \dif A, \] 
    where $\thick_g$ is the thick part of $X_g$ defined by \eqref{eqn:delta-thickthin}. 
\end{lemma}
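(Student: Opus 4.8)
The plan is to mimic the proof of Lemma \ref{lem:mass}: it suffices to show that for each short geodesic $\gamma\in\sN_\epsilon(X_g)$, the $L^2$-mass of $f$ on the deep part $\collar(\gamma,w(\gamma)-1/\delta)$ of the collar is controlled by (a constant multiple of) its mass on the shell $\sS(\gamma)=\collar(\gamma,w(\gamma)-1/\delta)\setminus\collar(\gamma,w(\gamma)-1/\delta-2)$. Summing over all $\gamma\in\sN_\epsilon(X_g)$ and noting that the shells are pairwise disjoint and contained in $\thick_g$ then yields the claim, exactly as in the last displayed computation in the proof of Lemma \ref{lem:mass}. So the real content is a one-collar estimate.

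For the one-collar estimate I would use the Fourier expansion \eqref{eqn:fourier} of $f$ on $\collar(\gamma,w(\gamma))$ and the substitution turning \eqref{eqn:u} into the comparison ODE. The key difference from Section \ref{section3} is that now $\lambda\le 1/4-\delta^2$, so even the zeroth Fourier mode $u$ (where $j=0$) satisfies $u''\ge(\tfrac14-\lambda)u/\cosh^2\rho\ge$ (a positive multiple of $u$) only after accounting for the $\cosh^{-2}\rho$ weight — more precisely $u'' = \bigl((\tfrac14-\lambda)+(\tfrac14+4\pi^2j^2/\ell(\gamma)^2)\cosh^{-2}\rho\bigr)u$ and the coefficient is $\ge \tfrac14-\lambda\ge\delta^2>0$ throughout. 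Thus for \emph{every} $j\ge0$ one has $u''\ge\delta^2 u$. The point of the definition of $\epsilon(\delta)$ and of removing a collar of width $1/\delta+2$ (rather than $1$) is to make $\delta^2\cdot(\text{width gained})$ large: I would apply \cite[Lemma 5.2]{WZ2025} with $u_1=\varphi_j$ or $\psi_j$ and $u_2=\cosh(\delta\rho)$ (which satisfies $u_2''=\delta^2 u_2$), giving
\[
\frac{\int_0^{w(\gamma)-1/\delta}u^2\dif\rho}{\int_0^{w(\gamma)-1/\delta+2}u^2\dif\rho}
\le \frac{\int_0^{w(\gamma)-1/\delta}\cosh^2(\delta\rho)\dif\rho}{\int_0^{w(\gamma)-1/\delta+2}\cosh^2(\delta\rho)\dif\rho}.
\]
A direct computation of this ratio of the form $\frac{2\delta a+\sinh(2\delta a)}{2\delta(a+2)+\sinh(2\delta(a+2))}$ with $a=w(\gamma)-1/\delta$ bounds it by roughly $e^{-4\delta}\cdot(\text{mild factor})$, and since $w(\gamma)>\arcsinh\frac{1}{\sinh(1/\delta+2)}$ is... actually $w(\gamma)$ can be as large as we like but the relevant bound only needs $w(\gamma)-1/\delta+2>0$, which holds because $w(\gamma)>1/\delta+2$ precisely when $\ell(\gamma)<2\epsilon(\delta)$. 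One then checks $e^{-4\delta}/(1-e^{-4\delta})<\tfrac12$ fails for small $\delta$, so instead I would track constants carefully: actually the ratio tends to $1$ as $\delta\to0$, so a fixed numerical bound like $1/2$ is \emph{not} available for all $\delta$, and the precise phrasing of the Lemma (constant $3$, independent of $\delta$) must come from a genuinely $\delta$-robust argument. This is the step I expect to be the main obstacle.

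To resolve it I would exploit that we are free to remove a collar of width $1/\delta+2$, which scales inversely with $\delta$: the gained exponential margin is $e^{-\delta\cdot O(1/\delta)}=O(1)$ from the removed $1/\delta$ part, but the extra $+2$ is the honest fixed margin, and more importantly the \emph{integrand grows like} $\cosh^2(\delta\rho)$ while $\rho$ ranges up to $w(\gamma)-1/\delta$, and $w(\gamma)$ itself satisfies $\cosh w(\gamma)=\sqrt{1+\sinh^{-2}(\ell(\gamma)/2)}$ — so for $\gamma$ very short, $w(\gamma)$ is large, $\delta w(\gamma)$ is large, and the ratio is controlled by $e^{-4\delta}$ times something bounded. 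For $\gamma$ of length near $2\epsilon(\delta)$, $w(\gamma)$ is near $1/\delta+2$, the removed region $[0,w(\gamma)-1/\delta]$ has length near $2$, and there $\cosh^2(\delta\rho)\le\cosh^2(2\delta)$ is essentially $1$, so $\int_0^{w(\gamma)-1/\delta}u^2$ is comparably small against $\int_{w(\gamma)-1/\delta}^{w(\gamma)-1/\delta+2}u^2$ by a convexity/monotonicity argument on $u$ directly (since $u''\ge\delta^2u>0$ forces $u$ to be increasing and convex past its minimum, so the tail integral dominates). I would therefore split into these two regimes, in each obtaining a bound of the mass ratio on $\collar(\gamma,w(\gamma)-1/\delta)$ versus $\sS(\gamma)$ by $\le 2$, uniformly in $\delta$, and conclude
\[
\int_{\collar(\gamma,w(\gamma)-1/\delta)}f^2\dif A\le 2\int_{\sS(\gamma)}f^2\dif A.
\]
Summing over $\gamma\in\sN_\epsilon(X_g)$ and adding $\int_{\thick_g}f^2\dif A$ on both sides gives $\int_{X_g}f^2\dif A\le 3\int_{\thick_g}f^2\dif A$, which is the statement.
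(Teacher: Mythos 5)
Your ingredients are the right ones --- since $\tfrac14-\lambda\ge\delta^2$, \emph{every} Fourier mode, including $j=0$, satisfies $u''\ge\delta^2u$, and one compares with $u_2=\cosh(\delta\rho)$ via \cite[Lemma 5.2]{WZ2025} --- but the comparison is set up against the wrong region, and this is a genuine gap rather than a loose constant. You bound $\int_0^{w(\gamma)-1/\delta}u^2$ by the integral over $[0,w(\gamma)-1/\delta+2]$, i.e.\ you try to control the collar mass by the mass on a band of \emph{fixed} width $2$; after the substitution $s=\delta\rho$ that band has width only $2\delta$, so the resulting ratio is of order $e^{-4\delta}$ and is not bounded away from $1$. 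You notice this yourself, but the proposed two-regime repair does not close it: in the ``very short $\gamma$'' regime the model solution $u=\cosh(\delta\rho)$ (admissible for the comparison lemma, and essentially realized by the $j=0$ mode when $\lambda$ is near $1/4-\delta^2$) gives $\int_0^a u^2\,\dif\rho\,\big/\int_a^{a+2}u^2\,\dif\rho\sim 1/(4\delta)$ for $a=w(\gamma)-1/\delta$ large, so the claimed bound ``deep mass $\le 2\times$ band mass'' is false uniformly in $\delta$. There is also a bookkeeping slip: the paper's $\sS(\gamma)=\collar(\gamma,w(\gamma)-1/\delta)\setminus\collar(\gamma,w(\gamma)-1/\delta-2)$ lies \emph{inside} $\thin_g$ (it is used for Lemma \ref{lem:inj}, not here), so even a correct bound against that shell would not prove the lemma.

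The correct comparison --- and what the paper does --- is against the \emph{entire} standard collar: one bounds $\int_0^{w(\gamma)-1/\delta}\cosh^2(\delta\rho)\,\dif\rho\big/\int_0^{w(\gamma)}\cosh^2(\delta\rho)\,\dif\rho$. The removed width $1/\delta$ becomes a margin of exactly $1$ in $s=\delta\rho$, so the ratio equals $\frac{2t-2+\sinh(2t-2)}{2t+\sinh(2t)}$ with $t=\delta w(\gamma)$, which is precisely the expression in \eqref{i-c-tt} with $w(\gamma)$ replaced by $t$. Since $\ell(\gamma)<2\epsilon<2\epsilon(\delta)$ forces $w(\gamma)>1/\delta+2$, we have $t>1$ and the same computation gives the bound $4/e^2$, hence $\int_{\collar(\gamma,w(\gamma)-1/\delta)}f^2\,\dif A<2\int_{\collar(\gamma,w(\gamma))\setminus\collar(\gamma,w(\gamma)-1/\delta)}f^2\,\dif A$, uniformly in $\delta$. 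The annuli $\collar(\gamma,w(\gamma))\setminus\collar(\gamma,w(\gamma)-1/\delta)$ are pairwise disjoint and contained in $\thick_g$, so summing over $\gamma\in\sN_\epsilon(X_g)$ yields the constant $3$ exactly as in Lemma \ref{lem:mass}; note that, unlike there, no orthogonality hypothesis is needed because the ODE inequality now holds for all $j\ge0$.
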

\begin{proof}
    The proof is essentially the same as that in Lemma \ref{lem:mass}. Since 
    \[ \frac{1}{4}-\lambda\geq \delta^2, \]
    for any $\gamma\in\sN_{\epsilon}(X_g)$, the proof of \eqref{i-c-tt} gives that
    \[ \frac{\int_{\collar(\gamma,w(\gamma)-1/\delta)} f^2 \dif A}{\int_{\collar(\gamma,w(\gamma))} f^2 \dif A} \leq \frac{\int_{0}^{w(\gamma)-1/\delta} \cosh^2\!(\delta\rho) \dif\rho}{\int_{0}^{w(\gamma)} \cosh^2\!(\delta\rho) \dif\rho} \leq \frac{4}{e^2}, \] 
    thus 
    \begin{equation*}
        \begin{aligned}
            \int_{X_g} f^2 \dif A 
            & = \int_{\bigcup_{\gamma\in \sN_{\epsilon}(X_g)}\collar(\gamma,w(\gamma)-1/\delta)} f^2 \dif A + \int_{\thick_g} f^2 \dif A\\
            &\leq 2 \int_{\bigcup_{\gamma\in \sN_{\epsilon}(X_g)}\collar(\gamma,w(\gamma))\setminus\collar(\gamma,w(\gamma)-1/\delta)} f^2 \dif A + \int_{\thick_g} f^2 \dif A\\
            &\leq 3 \int_{\thick_g} f^2 \dif A. 
        \end{aligned}
    \end{equation*}
    The proof is complete. 
\end{proof}

Now let $\lambda\leq1/4-\delta^2$ be an eigenvalue of $\Delta$ with multiplicity $m(\lambda)$, and let $\{\phi_i\}_{1\leq i\leq m(\lambda)}$ be $m(\lambda)$ orthonormal eigenfunctions with eigenvalue $\lambda$. For any $x\in X_g$, we define a function $f_x$ by 
\[ f_x(z) \df \frac{\sum_{1\leq i\leq m(\lambda)}\phi_i(x)\phi_i(z)}{\sqrt{\sum_{1\leq i\leq m(\lambda)}\phi_i(x)^2}}. \] 
Then clearly $f_x(z)$ is a normalized eigenfunction of $\Delta$ with eigenvalue $\lambda$ satisfying 
\[ f_x(x)^2 = \sum_{i=1}^{m(\lambda)}\phi_i(x)^2. \] 

We consider the thick-thin decomposition of $X_g$ defined by \eqref{eqn:delta-thickthin}.
Inspired by the recent work \cite{GLN2024} of Gross--Lachman--Nachmias, for any $j\geq0$, we also set 
\[ \mu_{j} = 2^{j}\sqrt{\lambda} \text{\quad and\quad} R_j = c_{1}\cdot 2^{j}/\sqrt{\lambda}. \] 
The constant $c_{1}>0$, depending only on $\epsilon$, will be determined in Lemma \ref{lem:pf1}. Now fix some $k\in[1,I_\epsilon]$. We first give two definitions. 
\begin{definition}\label{def:G_j}
    For each $j\geq0$, define 
    \[ G_j \df \Big\{ x\in \sA_k: \ \mu_{j-1}\leq f_x(x)^2 < \mu_{j} \Big\}, \] 
    where we set $\mu_{-1}=0$.  
\end{definition}
\begin{definition}\label{def:B_x}
    For any $x\in \sA_k$ and any $R>0$, define 
    \begin{equation*}
        B_{x,R} \df 
        \left\{z\in \sA_k\cup\sS_k:\ \begin{gathered}
            \text{there exists a geodesic segment of length $\leq R$ joining}\\ 
            \text{$x$ and $z$ that does not intersect with any $\gamma\in \sN_{\epsilon}(X_g)$} 
        \end{gathered}
        \right\}. 
    \end{equation*}
\end{definition}

We have the following lemmas. 
\begin{lemma}\label{lem:pf1}
    Suppose $x\in G_j$. If $j\geq1$, then 
    \begin{equation*}
        \area(B_{x,R_j})\leq \frac{4}{\mu_{j-1}}. 
    \end{equation*}
\end{lemma}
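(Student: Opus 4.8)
The plan is to use Lemma \ref{lem:oscillation} to argue that the eigenfunction $f_x$ cannot be too small on all of $B_{x,R_j}$, which forces the mass $\int f_x^2$ over $B_{x,R_j}$ to be large, and then invoke the normalization $\|f_x\|_{L^2}=1$ to bound the area. First I would observe that for $x\in G_j$ with $j\ge 1$ we have $f_x(x)^2\ge\mu_{j-1}=2^{j-1}\sqrt\lambda$, so in particular $|f_x(x)|\ge\sqrt{\mu_{j-1}}$. Now take any point $z\in B_{x,R_j}$. By Definition \ref{def:B_x} there is a geodesic segment $\alpha$ of length $\le R_j=c_1\cdot 2^j/\sqrt\lambda$ joining $x$ and $z$ that avoids every $\gamma\in\sN_\epsilon(X_g)$; applying Lemma \ref{lem:oscillation} along this segment (after replacing $\alpha$ by a shortest such segment, whose length is still $\le R_j$) gives
\[
|f_x(x)-f_x(z)|^2\le c(\epsilon)\cdot\lambda\cdot R_j = c(\epsilon)c_1\cdot 2^j\sqrt\lambda = 2c(\epsilon)c_1\,\mu_{j-1}.
\]
The key point is to choose $c_1=c_1(\epsilon)$ small enough — say $2c(\epsilon)c_1\le 1/4$ — so that $|f_x(x)-f_x(z)|^2\le\tfrac14\mu_{j-1}$, hence $|f_x(z)|\ge|f_x(x)|-|f_x(x)-f_x(z)|\ge\sqrt{\mu_{j-1}}-\tfrac12\sqrt{\mu_{j-1}}=\tfrac12\sqrt{\mu_{j-1}}$, i.e. $f_x(z)^2\ge\tfrac14\mu_{j-1}$ for every $z\in B_{x,R_j}$.

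Next I would integrate this pointwise lower bound over $B_{x,R_j}$. Since $f_x$ is a normalized eigenfunction, $1=\int_{X_g}f_x^2\,\dif A\ge\int_{B_{x,R_j}}f_x^2\,\dif A\ge\tfrac14\mu_{j-1}\cdot\area(B_{x,R_j})$, which rearranges to $\area(B_{x,R_j})\le 4/\mu_{j-1}$, exactly the claimed bound. (One should also note $B_{x,R_j}\subset\sA_k\cup\sS_k\subset X_g$, so the inclusion of domains is legitimate; this is built into Definition \ref{def:B_x}.)

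I expect the main subtlety — rather than a genuine obstacle — to be the bookkeeping with the constant $c_1$: it must be chosen depending only on $\epsilon$ (through $c(\epsilon)$ from Lemma \ref{lem:oscillation}) and must be fixed \emph{before} the radii $R_j$ are used elsewhere, so the statement "$c_1$ will be determined in Lemma \ref{lem:pf1}" is honored. A secondary point to handle cleanly is that Lemma \ref{lem:oscillation} is stated for a \emph{shortest} geodesic segment avoiding $\sN_\epsilon(X_g)$, whereas Definition \ref{def:B_x} only postulates \emph{some} such segment of length $\le R_j$; replacing it by a shortest one only decreases $\ell(\alpha)$, so the oscillation bound still applies with $\ell(\alpha)\le R_j$, and the argument goes through. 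Everything else is a one-line integration against the normalization.
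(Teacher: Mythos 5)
Your proposal is correct and follows essentially the same route as the paper: the paper likewise uses Lemma \ref{lem:oscillation} to show $B_{x,R_j}\subset\{z:\ f_x(z)\ge f_x(x)/2\}$ (fixing $c_1=1/(9c(\epsilon))$, which meets your requirement $2c(\epsilon)c_1\le 1/4$) and then concludes by Markov's inequality against the normalization $\int_{X_g}f_x^2=1$, which is exactly your final integration step. No gaps.
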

\begin{proof}
    We first prove that 
    \begin{equation}\label{eqn:pf1}
        B_{x,R_j} \subset \Big\{z\in X_g:\ f_x(z)\geq \frac{f_x(x)}{2} \Big\}.
    \end{equation}
    Otherwise, suppose that there would exist some $z_0\in B_{x,R_j}$ such that 
    \[ f_x(z_0) < \frac{f_x(x)}{2}. \] 
    By the definition of $B_{x,R_j}$, we can find a shortest geodesic segment of length $\leq R_j$ joining $x$ and $z_0$ that does not intersect with any $\gamma\in \sN_{\epsilon}(X_g)$. Thus, by Lemma \ref{lem:oscillation} we have 
    \[ \frac{\mu_{j-1}}{4}\leq \frac{f_x(x)^2}{4}\leq \abs{f_x(x) - f_x(z_0)}^2 \leq c_{2}\cdot R_j\cdot \lambda, \] 
    where $c_2=c(\epsilon)$ depends only on $\epsilon$. It follows that 
    \[ 1\leq 8c_{2}\cdot c_{1}. \] 
    If we set $c_{1} = 1/(9c_{2})$, then it leads to a contradiction. Thus, it follows from  \eqref{eqn:pf1} and the Markov inequality that for any $x\in G_j$,
    \begin{equation*}
    \area(B_{x,R_j})\leq  \area\left(\Big\{z\in X_g:\ f_x(z)\geq \frac{f_x(x)}{2} \Big\} \right)\leq \frac{\int_{X_g}f_x^2(z)}{\left(\frac{f_x(x)}{2}\right)^2}\leq \frac{4}{\mu_{j-1}}.
    \end{equation*}
    This completes the proof.
\end{proof}

We will use a different notion of diameter which is defined as follows.  
\begin{definition}\label{def:diam}
    \begin{equation*}
        \diam_c(\sA_k) \df \sup_{x,z\in \sA_k}
        \left\{\length(\alpha):\ \begin{gathered}
            \text{$\alpha$ is a shortest geodesic segment joining $x$ and $z$}\\ 
            \text{that does not intersect with any $\gamma\in \sN_{\epsilon}(X_g)$} 
        \end{gathered}
        \right\}.
    \end{equation*}
\end{definition}
\begin{lemma}\label{lem:pf2}
    Suppose $x\in \sA_k$ and $R>2c_1$. If $R < \diam_c(\sA_k)$, then 
    \[ \area(B_{x,R/2}) \geq c_{3}\cdot R, \] 
    where $c_3>0$ depends only on $\epsilon$. 
\end{lemma}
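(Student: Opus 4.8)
The plan is to prove the lower volume bound by exhibiting many disjoint geodesic balls inside $B_{x,R/2}$, mimicking the packing argument in the proof of Lemma \ref{rnet}. Since $R < \diam_c(\sA_k)$, there exist $p,q\in\sA_k$ and a shortest geodesic segment $\beta$ of length $> R$ joining $p$ and $q$ that avoids every $\gamma\in\sN_\epsilon(X_g)$; its endpoints lie outside $B_{x,R/2}$ in the appropriate sense, so one can route a shortest geodesic segment $\alpha:[0,L]\to X_g$ starting at $x$, not meeting any $\gamma\in\sN_\epsilon(X_g)$, with $L=R/2$ (or $L\geq R/2$; truncate to $R/2$). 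Concretely: since $x$ can be joined to one of $p,q$ by a short-geodesic-avoiding segment whose length plus $\mathrm{dist}$-type estimates exceed $R/2$, and since $B_{x,R/2}$ is defined via length of such segments, there is a point $y$ with a short-avoiding geodesic segment from $x$ of length exactly $R/2$; let $\alpha$ realize this.

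First I would place sample points $\alpha_n = \alpha(n\cdot 2c_1)$ for $n = 0, 1, \dots, \lfloor R/(4c_1)\rfloor$ along $\alpha$. By Lemma \ref{lem:inj} applied to the sub-segments of $\alpha$ (which is itself a shortest segment between points of $\sA_k$ avoiding $\sN_\epsilon(X_g)$), every $\alpha_n$ satisfies $\inj(\alpha_n) > \epsilon$ and $B(\alpha_n,\epsilon)\subset \sA_k\cup\sS_k$. Moreover each $\alpha_n$ lies in $B_{x,R/2}$ since the initial sub-segment of $\alpha$ from $x$ to $\alpha_n$ has length $\leq R/2$ and avoids $\sN_\epsilon(X_g)$; and since $B(\alpha_n,\epsilon/2)\subset\sA_k\cup\sS_k$ consists of points reachable from $\alpha_n$ hence from $x$ by a short-avoiding segment of length $\leq R/2 + \epsilon/2$, a minor adjustment of constants (replacing $2c_1$ by a slightly larger spacing, or working with $B_{x,R/2+1}$ and absorbing the loss) keeps these half-balls inside $B_{x,R/2}$.

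Next I would check disjointness of $\{B(\alpha_n,\epsilon/2)\}_n$: if $B(\alpha_{n_1},\epsilon/2)\cap B(\alpha_{n_2},\epsilon/2)\neq\emptyset$ then $\mathrm{dist}(\alpha_{n_1},\alpha_{n_2}) < \epsilon < 2c_1$ provided $c_1$ is chosen $\geq \epsilon/2$ (recall $c_1 = 1/(9c_2)$ is determined in Lemma \ref{lem:pf1}, and one is free to also require any universal relation with $\epsilon$ absorbed into the $\epsilon$-dependent constants); since consecutive sample points are at arc-length distance exactly $2c_1$ along the shortest segment $\alpha$, this forces $|n_1 - n_2| < 1$, i.e. $n_1 = n_2$ — exactly as in the argument following \eqref{eqn:oscillation-2}. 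The lower curvature bound $\mathrm{cur} > -1$ near $\alpha_n$ (valid since $B(\alpha_n,\epsilon)\subset\sA_k\cup\sS_k$ and by \eqref{eqn:inj}-type control the injectivity radius forces the metric to be genuinely hyperbolic there) together with $\inj(\alpha_n)>\epsilon$ gives $\area(B(\alpha_n,\epsilon/2))\geq \mathrm{area}\ \text{of a hyperbolic disc of radius}\ \epsilon/2 =: a(\epsilon) > 0$, a constant depending only on $\epsilon$. Summing,
\[
\area(B_{x,R/2}) \geq \sum_{n=0}^{\lfloor R/(4c_1)\rfloor} \area\big(B(\alpha_n,\epsilon/2)\big) \geq \Big\lfloor\frac{R}{4c_1}\Big\rfloor\cdot a(\epsilon) \geq \frac{R}{8c_1}\cdot a(\epsilon),
\]
where the last step uses $R > 2c_1$ so that $\lfloor R/(4c_1)\rfloor \geq R/(8c_1)$; setting $c_3 = a(\epsilon)/(8c_1)$, which depends only on $\epsilon$, finishes the proof.

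The main obstacle I anticipate is bookkeeping the constants so that the sampled half-balls genuinely sit inside $B_{x,R/2}$ rather than inside a slightly larger set $B_{x,R/2+O(1)}$, and ensuring the spacing $2c_1$ is simultaneously large enough to force disjointness $\Rightarrow |n_1-n_2|<1$ and small enough that $\lfloor R/(4c_1)\rfloor$ is comparable to $R$; this is a matter of choosing $c_1$ and the spacing consistently with the already-fixed value $c_1 = 1/(9c_2)$ from Lemma \ref{lem:pf1}, possibly at the cost of shrinking $\epsilon$ in intermediate estimates or inflating the $\epsilon$-dependent constants, none of which affects the stated conclusion. A secondary point requiring care is the construction of the segment $\alpha$ of length $R/2$ emanating from $x$ and avoiding $\sN_\epsilon(X_g)$ from the hypothesis $R < \diam_c(\sA_k)$ — one should argue that the defining supremum in Definition \ref{def:diam} being $> R$ yields, after possibly restricting to a sub-segment through $x$ or concatenating, a short-avoiding geodesic of length at least $R/2$ with one endpoint at (or arbitrarily close to) $x$, which is where connectedness of $\sA_k$ and the triangle-inequality-type behaviour of the "$c$-distance" enter.
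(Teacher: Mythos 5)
Your proposal is correct and follows essentially the same argument as the paper: take a shortest $\sN_\epsilon$-avoiding geodesic segment of length $R/2$ emanating from $x$ (which lies in $\sA_k\cup\sS_k$ by Lemma \ref{lem:inj}), pack pairwise disjoint balls of definite area along it, and sum. The paper sidesteps the constant-bookkeeping issues you flag by spacing the centers $c_1/2$ apart, stopping at arc-length $\le R/2-c_1/2$, and using balls of radius $c_1/4$ with area lower bound $\min\{\pi\epsilon^2,\pi c_1^2/16\}$, which keeps the balls inside $B_{x,R/2}$ and makes disjointness automatic.
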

\begin{proof}
    Since $R<\diam_c(\sA_k)$, by definition there exist some $z\in \sA_k$ and a shortest geodesic segment $\alpha$ from $x$ to $z$ of length $R/2$ that does not intersect with any $\gamma\in \sN_{\epsilon}(X_g)$. 
    By Lemma \ref{lem:inj}, we know that $\alpha\subset\sA_k\cup\sS_k$, hence $$\alpha\subset B_{x,R/2}.$$ 
    Let $\alpha:[0,R/2] \to X_g$ be a geodesic of arc-length parameter with $\alpha(0)=x$. Then for each integer $n \in \{0,\cdots, \lfloor R/c_1 \rfloor - 1\}$, we have 
	\[ B(\alpha(nc_1/2),c_1/4)\subset B_{x,R/2}, \]
	and the geodesic balls $B(\alpha(nc_1/2),c_1/4)$, $1\leq n\leq \lfloor R/c_1\rfloor-1$, are pairwise disjoint. Moreover, by Lemma \ref{lem:inj} we have that for all $0\leq n\leq \lfloor R/c_1\rfloor-1$,
    \[\inj(\alpha(nc_1/2))>\epsilon.\]
    Therefore, 
	\begin{equation*}
	     \begin{aligned}
        	\area(B_{x,R/2})
            &\ge \sum_{n=0}^{\lfloor R/c_1\rfloor - 1}\area\big(B(\alpha(nc_1/2),c_1/4)\big) \\
            &\ge \min\Big\{\pi\epsilon^2, \frac{\pi c_1^2}{16} \Big\}\cdot \lfloor R/c_1\rfloor\\
            &\ge c_3\cdot R, 
        \end{aligned}
	\end{equation*}
   where $c_3$ depends only on $\epsilon$. The proof is complete.
\end{proof}

Now we give an upper bound of the integral of $f_x(x)^2$ over $\sA_k$. 
\begin{lemma}\label{lem:pf3}
    There exists some constant $C>0$ depending only on $\epsilon$ such that 
    \begin{equation*}
        \int_{\sA_k} f_x(x)^2 \leq \max\Big\{ C\sqrt{\lambda}\cdot \area(\sA_k\cup\sS_k),\ 8 \Big\}. 
    \end{equation*}
\end{lemma}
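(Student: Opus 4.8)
The plan is to stratify $\sA_k$ by the size of $f_x(x)^2$ and estimate each layer. Using Definition \ref{def:G_j} we have $\sA_k=\bigsqcup_{j\ge0}G_j$, and since $f_x(x)^2<\mu_j$ on $G_j$,
\[
\int_{\sA_k} f_x(x)^2\,\dif A_x \ \le\ \sum_{j\ge0}\mu_j\area(G_j).
\]
The $j=0$ term is harmless: $\mu_0\area(G_0)\le\sqrt\lambda\,\area(\sA_k)$. For $j\ge1$ I split the remaining sum according to how $R_j$ compares with $\diam_c(\sA_k)$. Since $R_j=c_1 2^j/\sqrt\lambda\to\infty$, there is a least integer $j_0\ge1$ with $R_{j_0}\ge\diam_c(\sA_k)$; I treat $1\le j<j_0$ and $j\ge j_0$ separately. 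Note throughout that $\lambda<1/4$ forces $\sqrt\lambda<1/2$, hence $R_j/2=c_1 2^{j-1}/\sqrt\lambda>2c_1$ for every $j\ge1$.

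\textbf{The range $1\le j<j_0$.} Here $2c_1<R_j/2<R_j<\diam_c(\sA_k)$, so both Lemma \ref{lem:pf1} and Lemma \ref{lem:pf2} are available. I would run a packing/covering argument inside $G_j$: pick a maximal subset $\{x_{j,1},\dots,x_{j,n_j}\}\subset G_j$ that is ``$R_j/2$-separated'' in the sense $x_{j,i'}\notin B_{x_{j,i},R_j/2}$ for $i\ne i'$ (recall $B_{\cdot,\cdot}$ is symmetric in its two points). Maximality gives $G_j\subset\bigcup_i B_{x_{j,i},R_j/2}$, so by Lemma \ref{lem:pf1}, $\area(G_j)\le n_j\cdot 4/\mu_{j-1}$, and since $\mu_j/\mu_{j-1}=2$ this yields $\mu_j\area(G_j)\le 8n_j$. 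For the reverse bound on $n_j$: the sets $B_{x_{j,i},R_j/4}$ are pairwise disjoint, all contained in $\sA_k\cup\sS_k$, and each has area $\ge c_3 R_j/2$ by Lemma \ref{lem:pf2} applied with $R=R_j/2$; therefore $n_j\le 2\area(\sA_k\cup\sS_k)/(c_3 R_j)$. Combining, $\mu_j\area(G_j)\le 16\sqrt\lambda\,\area(\sA_k\cup\sS_k)/(c_3 c_1 2^j)$, and summing the geometric series over $j\ge1$ contributes at most $\tfrac{16}{c_3 c_1}\sqrt\lambda\,\area(\sA_k\cup\sS_k)$.

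\textbf{The range $j\ge j_0$.} Now $R_j\ge R_{j_0}\ge\diam_c(\sA_k)$, so for any $x\in G_j$ every point of $\sA_k$ is joined to $x$ by a geodesic segment of length $\le\diam_c(\sA_k)\le R_j$ avoiding $\sN_\epsilon$; hence $\sA_k\subset B_{x,R_j}$. The proof of Lemma \ref{lem:pf1} (inclusion \eqref{eqn:pf1}) shows $B_{x,R_j}\subset\{f_x\ge f_x(x)/2\}$, so $1=\|f_x\|_{L^2}^2\ge\int_{\sA_k}f_x^2\ge\area(\sA_k)\,(f_x(x)/2)^2$, i.e. $f_x(x)^2\le 4/\area(\sA_k)$ for every $x\in\bigcup_{j\ge j_0}G_j$. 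Integrating over this set gives $\int_{\bigcup_{j\ge j_0}G_j}f_x(x)^2\,\dif A_x\le 4$. Adding the three contributions, $\int_{\sA_k}f_x(x)^2\,\dif A_x\le C_1\sqrt\lambda\,\area(\sA_k\cup\sS_k)+4$ with $C_1=1+16/(c_3c_1)$ depending only on $\epsilon$; since $a+4\le\max\{2a,8\}$ for all $a\ge0$, the lemma follows with $C=2C_1$.

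The step I expect to be the main obstacle is the disjointness of the balls $B_{x_{j,i},R_j/4}$ in the first range. Because $B_{x,R}$ is not a metric ball but is cut out by geodesic segments that must avoid the short curves $\sN_\epsilon$ and stay in $\sA_k\cup\sS_k$, one cannot merely concatenate: a concatenation of two admissible geodesic segments is only a broken path. The point is to argue, using the collar geometry exactly as in Lemma \ref{lem:inj} together with the width bound \eqref{eqn:lw-1}, that a shortest path between two points of $\sA_k$ that avoids $\sN_\epsilon$ and lies in $\sA_k\cup\sS_k$ is in fact a genuine geodesic segment disjoint from $\sN_\epsilon$ (it cannot become tangent to any $\gamma\in\sN_\epsilon$ without coinciding with it, nor be shortened by dipping deeper into a collar); this converts a length-$\le R_j/2$ broken path into membership $x_{j,i'}\in B_{x_{j,i},R_j/2}$, contradicting the separation. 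A secondary routine point is the bookkeeping at $j=j_0$ and checking that $R_j/2$ always lies in the admissible window $(2c_1,\diam_c(\sA_k))$ demanded by Lemma \ref{lem:pf2}.
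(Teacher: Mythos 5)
Your proposal is correct and follows essentially the same strategy as the paper: stratify $\sA_k$ into the level sets $G_j$, combine the upper area bound of Lemma \ref{lem:pf1} with the packing lower bound of Lemma \ref{lem:pf2} to sum a geometric series, and treat the regime $R_j\ge\diam_c(\sA_k)$ separately (the paper organizes this as two exclusive cases on the maximal nonempty index $j_0$, while you split the sum at the threshold index, but the content is the same). The concatenation issue you flag for the disjointness of the $B_{x_{j,i},R_j/4}$ is also present, implicitly, in the paper's own covering claim \eqref{eqn:pf3-1}, and is resolved exactly as you indicate by the paper's standing observation (stated just before Lemma \ref{lem:inj}) that a shortest $\sN_\epsilon$-avoiding path between points of $\sA_k$ is a genuine geodesic segment, which then lies in $\sA_k\cup\sS_k$ by Lemma \ref{lem:inj}.
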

\begin{proof}
    Let $j_0\geq0$ be the maximal index such that $G_{j_0}\neq\emptyset$ (see Definition \ref{def:G_j}). If $j_0=0$, then 
    \begin{equation}\label{eqn:bound0}
        \int_{\sA_k} f_x(x)^2 \leq \mu_0\cdot\area(G_0) = \sqrt{\lambda}\cdot\area(\sA_k). 
    \end{equation}
    From now on, we always assume that $j_0\geq1$. Note that 
    \begin{equation}\label{ub-g1}
        \int_{\sA_k} f_x(x)^2 \leq \sum_{j=0}^{j_0}\mu_j\cdot\area(G_j).
    \end{equation}

    \underline{Case-1}. $R_{j_0}\geq \diam_c(\sA_k)$. Then by definition we know that for any $x\in G_{j_0}$
    \begin{equation*}
        \sA_k \subset B_{x,R_{j_0}}. 
    \end{equation*}
    Since $j_0\geq1$, it follows from Lemma \ref{lem:pf1} that 
    \begin{equation*}
        \area(\sA_k) \leq \area(B_{x,R_{j_0}}) \leq \frac{4}{\mu_{j_0-1}},
    \end{equation*}
    we thus have 
    \begin{equation*}
        \mu_{0}< \mu_{1}< \cdots< \mu_{j_0}\leq \frac{8}{\area(\sA_k)}. 
    \end{equation*}
    By the definition of $G_j$ we thus have 
    \begin{equation}\label{eqn:bound1}
        \sum_{j=0}^{j_0}\mu_j\cdot\area(G_j) \leq \frac{8}{\area(\sA_k)}\sum_{j=0}^{j_0}\area(G_j) = 8. 
    \end{equation}
    
    \underline{Case-2}. $R_{j_0}< \diam_c(\sA_k)$. For any $1\leq j\leq j_0$, let $\{B_{x_i,R_j/2}\}_{1\leq i\leq l}$ be a maximal collection of disjoint sets such that $x_i\in G_j$, $\Forall 1\leq i\leq l$. By Lemma \ref{lem:pf2} we have
    \begin{equation}\label{ub-f-l}
    l\leq \frac{\area(\sA_k \cup \sS_k)}{c_3R_j}.
    \end{equation}
    Next we claim that 
    \begin{equation}\label{eqn:pf3-1}
        G_j\subset \bigcup_{1\leq i\leq l} B_{x_i,R_j}. 
    \end{equation}
    Otherwise, there would exist some $x\in G_j$ such that any geodesic segment joining $x$ and $x_i$ that does not intersect with any $\gamma\in \sN_{\epsilon}(X_g)$ has length $\geq R_j$. Thus, $B_{x,R_j/2}$ is disjoint with any $B_{x_i,R_j/2}$, leading to a contradiction because our choice of $\{B_{x_i,R_j/2}\}_{1\leq i\leq l}$ is maximal. 
    Combining \eqref{eqn:pf3-1}, Lemma \ref{lem:pf1}, Lemma \ref{lem:pf2} and \eqref{ub-f-l}, we deduce that 
    \begin{equation*}
        \area(G_j)
        \leq \sum_{i=1}^{l} \area(B_{x_i,R_j})
        \leq \sum_{i=1}^{l} \frac{4}{\mu_{j-1}}
        \leq \frac{\area(\sA_k\cup\sS_k)}{c_3R_j}\frac{4}{\mu_{j-1}}.
    \end{equation*}
    Therefore, we have 
    \begin{equation}\label{eqn:bound2}
        \begin{aligned}
            \sum_{j=0}^{j_0}\mu_j\cdot\area(G_j) 
            &\leq \mu_0\cdot\area(G_0) + \sum_{j=1}^{j_0}\mu_j\cdot\frac{4\area(\sA_k\cup\sS_k)}{c_3\mu_{j-1}R_{j}}\\
            &\leq \sqrt{\lambda}\cdot\area(\sA_k) + \sum_{j=1}^{j_0}\frac{8\sqrt{\lambda}\area(\sA_k\cup\sS_k)}{c_1c_32^{j}}\\
            &\leq C\sqrt{\lambda}\cdot \area(\sA_k\cup\sS_k). 
        \end{aligned}
    \end{equation}  
    The lemma then follows from \eqref{eqn:bound0}, \eqref{ub-g1}, \eqref{eqn:bound1} and \eqref{eqn:bound2}. 
\end{proof}

Now we are ready to prove Theorem \ref{thm:mt-2}. 
\begin{proof}[Proof of Theorem \ref{thm:mt-2}]
    By Lemma \ref{l-s-ub-3} and Lemma \ref{lem:pf3} we have 
    \begin{equation*}
        \begin{aligned}
            m(\lambda) = \int_{X_g} \sum_{i=1}^{m(\lambda)} \phi_i(x)^2
            &\leq 3 \int_{\thick_g} \sum_{i=1}^{m(\lambda)} \phi_i(x)^2\\
            &  =  3 \sum_{k=1}^{I_\epsilon} \int_{\sA_k} f_x(x)^2\\
            &\leq 3 \sum_{k=1}^{I_\epsilon} \max\Big\{ C\sqrt{\lambda}\cdot \area(\sA_k\cup\sS_k),\ 8 \Big\}\\
            &\leq 3C\sqrt{\lambda}\cdot \area(X_g) + 24I_\epsilon. 
        \end{aligned}
    \end{equation*}
    The proof is then finished by Gauss--Bonnet.  
\end{proof}

\bibliographystyle{amsalpha}
\bibliography{ref}

\end{document}